\providecommand{\U}[1]{\protect\rule{.1in}{.1in}}
\newtheorem{theorem}{Theorem}
\newtheorem{definition}[theorem]{Definition}
\newtheorem{example}[theorem]{Example}
\newtheorem{lemma}[theorem]{Lemma}
\newtheorem{proposition}[theorem]{Proposition}
\newtheorem{remark}[theorem]{Remark}
\newenvironment{proof}[1][Proof]{\noindent\textbf{#1.} }{\ \rule{0.5em}{0.5em}}
\begin{document}
	
	\author{V\'{\i}ctor A. Vicente-Ben\'{\i}{}tez\\{\small Departamento de F\'isica, Universidad de Guanajuato, Campus Le\'on}\\{\small Lomas del Bosque 103, Lomas del Campestre, Le\'on, Guanajuato C.P. 37150 M\'{e}xico }\\{\small  va.vicente@ugto.mx, } }
	\title{Transmutation operators and complete systems of solutions for the radial Bicomplex Vekua equation}
	\date{}
	\maketitle

\begin{abstract}
	The construction of a pair of transmutation operators for the  radial main Vekua equation with a Bicomplex-valued coefficient is presented. The pair of operators transforms the Bicomplex analytic functions into the solutions of the main Vekua equation. The analytical properties of the operators in the space of classical solutions and the pseudoanalytic Bergman space are established. With the aid of such operators, a complete system of solutions for the main Vekua equation, called radial formal powers, is obtained. The completeness of the radial formal powers is proven with respect to the uniform convergence on compact subsets and in the Bicomplex pseudoanalytic Bergman space with the $L_2$-norm. For the pseudoanalytic Bergman space on a disk, we show that the radial formal powers are an orthogonal basis, and a representation for the Bergman kernel in terms of such basis is obtained. 
\end{abstract}

\textbf{Keywords: } Bicomplex main Vekua equation, transmutation operators, pseudoanalytic functions, Bergman spaces, complete systems of solutions. \newline

\textbf{MSC Classification:} 30E10; 30G20; 30H20; 35A22; 35A35; 35C10.

\section{Introduction}
In this work, we study the main Vekua equation 
\begin{equation}\label{mainvekuaintro}
	\overline{\boldsymbol{\partial}}W(z)= \frac{\overline{\boldsymbol{\partial}}f(z)}{f(z)}\overline{W}(z), \quad z \in \Omega,
\end{equation}
where $\Omega\subset \mathbb{C}$ is a bounded domain star-shaped with respect to $z=0$, $f$ is a complex valued function that does not vanish in the whole domain $\Omega$ and only depends on the radial component $r=|z|$, and   $\overline{\boldsymbol{\partial}}:= \frac{1}{2}\left(\frac{\partial }{\partial x}+\mathbf{j}\frac{\partial }{\partial y}\right)$ is the Bicomplex Cauchy-Riemann operator. The conjugation $\overline{W}$ is respect to the Bicomplex unit $\mathbf{j}$. Solutions of Eq. (\ref{mainvekuaintro}) are called {\it pseudoanalytic functions} (or {\it generalized analytic functions}) whose theory was introduced in \cite{bers,vekua0} for the complex case. Bicomplex Vekua equations appear, for example, in the study of the Dirac system with fixed energy and with a scalar potential in the two-dimensional case \cite{camposmendez,castanieda}.  Solutions of (\ref{mainvekuaintro}) are closely related to the solutions of the two-dimensional Schr\"odinger equation. Indeed, if $W=u+\mathbf{j}v$ is a Bicomplex solution of (\ref{mainvekuaintro}), where $u$ and $v$ are scalar complex valued functions, then $u$ is a solution of the Schr\"odinger equation $\bigtriangleup u-qu=0$ with $q=\frac{\bigtriangleup f}{f}$, and $v$ is a soltion of the Darboux transformed equation \cite{kravpseudo1,pseudoanalyticvlad}. In \cite{CamposBicomplex}, a generalization of the results of the Bers-Kravchenko theory developed in \cite{bers,pseudoanalyticvlad}, to the case of Bicomplex functions, is presented.

Theory of transmutation operators, also called transformation operators, is a widely used tool in studying partial differential equations, in particular, in the study of  Schr\"odinger and the main Vekua equations \cite{BegehrGilbert,camposmendez,hugo,pablo,transmutationdarboux,transhiperbolic,mine1}. In the case of the radial Schr\"odinger equation, S. Bergman (in \cite{bergman1} for the case of an analytic potential) and R. Gilbert and K. Atkinson (in\cite{gilbertatkinson} for $C^1$-potentials) showed that any classical solution is the image of a harmonic function $h$ under the action of the integral operator
\[
\mathbf{T}_fh(z)= h(z)+\int_0^1\sigma G^f(r,1-\sigma^2)h(\sigma^2 z)d\sigma,
\]
where $G^f$ is a $C^2$ function. In \cite{mine1}, the properties of the operator $\mathbf{T}_f$ are studied, and it is shown that $\mathbf{T}_f$ is a transmutation operator that relates the operators $r^2(\bigtriangleup-q)$ and $r^2\bigtriangleup$. 

The aim of this paper is to construct a pair of transmutation operators that relates the main Vekua operator $\overline{\boldsymbol{\partial}}-\frac{\overline{\boldsymbol{\partial}}f}{f}C$ (where $C$ is the Bicomplex conjugation) with the Bicomplex Cauchy Riemann operator. Following the ideas presented in \cite{camposmendez,transhiperbolic,transmutationdarboux} (where the Vekua equation with a function $f$ that only depends on the variable $x$, is studied), we found a pair of relations between the operator $\mathbf{T}_f$ and their Darboux transformed operator, that is, the operator $\mathbf{T}_{\frac{1}{f}}$ for the associated Darboux transformed radial Schr\"odinger equation. With the aid of such relations, we construct a pair of transmutation operators $\mathcal{T}_f$ and $\mathcal{T}_{\frac{1}{f}}$ whose scalar and vectorial parts are given precisely $\mathbf{T}_f$ and $\mathbf{T}_{\frac{1}{f}}$. As a consequence, a complete system of solutions for (\ref{mainvekuaintro}), that we called the {\it Bicomplex radial formal powers}, is obtained as a result of transmuting the Bicomplex powers $\widehat{z}^n:= (x+\mathbf{j}y)^n$, with $x,y\in \mathbb{R}$, $n\in \mathbb{N}_0$. The completeness of the formal powers in the topology of the uniform convergence on compact subsets of $\Omega$ is shown. In the same way, the properties of the pseudoanalytic Bergman space of $L_2$-solutions of (\ref{mainvekuaintro}) is established. We establish the completeness of the pseudoanalytic Bergman space and the existence of a reproducing Bergman kernel. These results generalize those obtained in \cite{delgadoleblond,camposbergman}, for the case of the complex Vekua equation. We shown that the operators $\mathcal{T}_f$ and $\mathcal{T}_{\frac{1}{f}}$ are bounded in the Bergman space, and we obtain the conditions in the domain $\Omega$ so that the radial formal powers are a complete system in the $L_2$-norm. Furthermore, in the particular case when $\Omega$ is a disk, the radial formal powers are an orthogonal basis.

The paper is structured as follows. Section 2 presents an overview of the main results concerning to the algebra of Bicomplex numbers and Bicomplex holomorphic functions. In section 3, the basic properties of the solutions of the main Vekua equation are presented, together with the main properties of the pseudoanalytic Bergman space, and the corresponding Bergman reproducing kernel. In section 4 we present the construction of a pair of transmutation operators, that transmute the Bicomplex holomorphic functions in solutions of the radial main Vekua equation. The analytical properties of the operators are studied, both in the topology of the uniform convergence on compact subsets, and in the pseudoanalytic Bergman space. Section 5 presents the construction of the radial formal powers. It is shown that this system is complete, both in the topology of the uniform convergence on compact subsets and in the pseudoanalytic Bergman space  with the $L_2$-norm. In the particular case when the domain $\Omega$ is a disk, it is shown that the formal powers are an orthogonal basis for the pseudoanalytic Bergman space. 

\section{Bicomplex analytic functions}
This section summarizes the main properties of Bicomplex numbers and Bicomplex valued analytic functions of one complex variables, which can be found, for example, in \cite{CamposBicomplex,rochon,pseudoanalyticvlad,ShapiroBicomplex,perezregalado}.
\subsection{Basic properties}
Consider the set $\mathbb{C}^2$ with the usual addition and multiplication by an scalar. The product of two elements $W=(w_1,w_2), Z=(z_1,z_2)\in \mathbb{C}^2$ is defined as follows: $WZ:=(w_1z_1-w_2z_2,w_1z_2+w_2z_1)$. With this product, $\mathbb{C}^2$ is a commutative algebra over $\mathbb{C}$ with unitary element $(1,0)$, and it is called the algebra of the {\it Bicomplex  numbers}, that is denoted by $\mathbb{B}$. Denote $\mathbf{j}:=(0,1)$. Identifying $a\in \mathbb{C}$ with $(a,0)$ we have that any $W\in \mathbb{B}$ can be written in a unique form as $W=u+\mathbf{j}v$, with $u,v\in \mathbb{C}$. We call $u$ and $v$ the {\it Scalar} and {\it Vectorial} parts of $W$, respectively, and are denoted by  $\operatorname{Sc}W=u$ and $\operatorname{Vec}W=v$. We have that $\mathbf{j}^2=-1$ and $i\mathbf{j}=\mathbf{j}i$.  The bicomplex conjugate of $W$ is defined by $\overline{W}:= u-\mathbf{j}v$. Note that $W\overline{W}=u^2-v^2\in \mathbb{C}$. The algebra $\mathbb{B}$ contains zero divisors. Indeed, consider 
\[
\mathbf{k}:=i\mathbf{j}, \qquad \mathbf{p}^{\pm} := \frac{1}{2}(1\pm\mathbf{k}),
\]
hence $\mathbf{p}^{+}\mathbf{p}^{-}=0$, $(\mathbf{p}^{\pm})^2=\mathbf{p}^{\pm}$ and $\mathbf{p}^{+}+\mathbf{p}^{-}=1$. The set of the zero divisors of $\mathbb{B}$ is denoted by $\sigma(\mathbb{B})$. The following proposition summarizes some properties of the Bicomplex numbers shown in \cite{CamposBicomplex, ShapiroBicomplex}.
 
\begin{proposition}\label{properties1}
	Let $W,V\in \mathbb{B}$.
\begin{enumerate}
	\item $W\in \sigma(\mathbb{B})$ iff $W\overline{W}=0$.
	\item If $W\overline{W}\neq 0$, $W$ is invertible and $W^{-1}= \frac{\overline{W}}{W\overline{W}}$.
	\item There exist unique $W^{\pm}\in \mathbb{C}$ such that $W=\mathbf{p}^{+}W^{+}+\mathbf{p}^{-}W^{-}$. Furthermore, $W^{\pm}=\operatorname{Sc}W\mp i\operatorname{Vec}W$.
	\item $W\in\sigma(\mathbb{B})$ iff $W=\mathbf{p}^{+}W^{+}$ or $W=\mathbf{p}^{-}W^{-}$.
	\item The product $WV$ can be written as
	\begin{equation}\label{producbicomplex}
		WV= \mathbf{p}^{+}W^{+}V^{+}+\mathbf{p}^{+}W^{-}V^{-}.
	\end{equation}
In particular
\begin{equation}\label{powerbicomplex}
	W^n= \mathbf{p}^{+}(W^{+})^n+\mathbf{p}^{-}(W^{-})^n, \quad \forall n\in \mathbb{N}.
\end{equation}
\end{enumerate}	
\end{proposition}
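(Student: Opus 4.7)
The plan is to prove the items not in the listed order but in the order that minimizes redundancy: first establish (3), then use it as the main computational tool for (1), (4), (5), and finally (2).

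First I would prove (3) by direct verification. Given $W=u+\mathbf{j}v$, define $W^{\pm}:=u\mp iv\in\mathbb{C}$ and compute
\[
\mathbf{p}^{+}W^{+}+\mathbf{p}^{-}W^{-}=\tfrac{1}{2}(1+i\mathbf{j})(u-iv)+\tfrac{1}{2}(1-i\mathbf{j})(u+iv)=u+\mathbf{j}v=W,
\]
where the cross terms collapse because $i\mathbf{j}=\mathbf{j}i$. Uniqueness is the observation that $\mathbf{p}^{+}W^{+}+\mathbf{p}^{-}W^{-}=0$ multiplied by $\mathbf{p}^{\pm}$ gives $\mathbf{p}^{\pm}W^{\pm}=0$, which forces $W^{\pm}=0$ since $1\pm i\mathbf{j}\neq 0$ in $\mathbb{C}$.

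Next I would note that conjugation exchanges the idempotents: $\overline{\mathbf{j}}=-\mathbf{j}$ gives $\overline{\mathbf{p}^{\pm}}=\mathbf{p}^{\mp}$, so $\overline{W}=\mathbf{p}^{-}W^{+}+\mathbf{p}^{+}W^{-}$. Using $(\mathbf{p}^{\pm})^2=\mathbf{p}^{\pm}$, $\mathbf{p}^{+}\mathbf{p}^{-}=0$ and $\mathbf{p}^{+}+\mathbf{p}^{-}=1$, the product formula in (5) follows immediately from distributivity, and (\ref{powerbicomplex}) is then induction on $n$. Plugging $V=\overline{W}$ into the product formula gives $W\overline{W}=W^{+}W^{-}\in\mathbb{C}$, which is the single identity that drives the remaining parts.

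Items (1) and (4) follow from $W\overline{W}=W^{+}W^{-}$ combined with uniqueness in (3). For the forward direction of (1), if $W\in\sigma(\mathbb{B})$ there is $V\neq 0$ with $WV=0$; by (5) this reads $\mathbf{p}^{+}W^{+}V^{+}+\mathbf{p}^{-}W^{-}V^{-}=0$, hence $W^{+}V^{+}=W^{-}V^{-}=0$ in $\mathbb{C}$, and since some $V^{\pm}\neq 0$ we conclude at least one of $W^{\pm}$ vanishes, i.e.\ $W^{+}W^{-}=W\overline{W}=0$. Conversely, if $W\overline{W}=0$ then either $W=0$ or $\overline{W}\neq 0$ witnesses $W$ as a zero divisor. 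The same dichotomy $W^{+}=0$ or $W^{-}=0$ yields (4) via $W=\mathbf{p}^{\pm}W^{\pm}$. Finally for (2), if $W\overline{W}\neq 0$ this is a nonzero complex scalar, so $\frac{\overline{W}}{W\overline{W}}$ is a legitimate element of $\mathbb{B}$, and $W\cdot\frac{\overline{W}}{W\overline{W}}=\frac{W\overline{W}}{W\overline{W}}=1$.

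I do not expect any real obstacle: the only subtle point is keeping track of which conjugation is meant (the Bicomplex conjugation with respect to $\mathbf{j}$, not to $i$), so that $\overline{\mathbf{p}^{\pm}}=\mathbf{p}^{\mp}$ is correctly used in deriving $W\overline{W}=W^{+}W^{-}$. Once that identification is made, the five statements reduce to routine algebra in the idempotent decomposition.
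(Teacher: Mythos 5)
Your proof is correct; the paper itself gives no argument for this proposition (it is quoted from \cite{CamposBicomplex,ShapiroBicomplex}), and your route through the idempotent decomposition $W=\mathbf{p}^{+}W^{+}+\mathbf{p}^{-}W^{-}$, the exchange $\overline{\mathbf{p}^{\pm}}=\mathbf{p}^{\mp}$, and the identity $W\overline{W}=W^{+}W^{-}$ is exactly the standard one used in those references. Two cosmetic points only: in the uniqueness step, $1\pm i\mathbf{j}$ is a nonzero element of $\mathbb{B}$, not of $\mathbb{C}$ (what you really use is that a nonzero complex scalar is invertible in $\mathbb{B}$, so $\mathbf{p}^{\pm}W^{\pm}=0$ forces $W^{\pm}=0$), and your product formula silently corrects the typo in the paper's display, whose second term should read $\mathbf{p}^{-}W^{-}V^{-}$.
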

Denote $\mathcal{R}(\mathbb{B}):= \mathbb{B}\setminus \sigma(\mathbb{B})$. Thus, by Proposition \ref{properties1}(1)-(2), $W\in \mathcal{R}(\mathbb{B})$ iff $W$ is invertible, and from (\ref{producbicomplex}) we have
\begin{equation}\label{inversebicomplex}
	W^{-1}=\frac{\mathbf{p}^{+}}{W^{+}}+\frac{\mathbf{p}^{-}}{W^{-}}.
\end{equation}
From this, formula (\ref{powerbicomplex}) is valid until for $n\in \mathbb{Z}$.  For $W\in \mathbb{B}$ define 
\begin{equation}\label{involution1}
W^{\dagger}:= \mathbf{p}^{+}(W^{+})^{*}+\mathbf{p}^{-}(W^{-})^{*},	
\end{equation}
 where $^{*}$ denotes the standard complex conjugation. Operation $W\mapsto W^{\dagger}$ is an involution in $\mathbb{B}$ \cite[Ch. I]{ShapiroBicomplex}. For $W,V\in \mathbb{B}$ define
\[
\langle W,V \rangle_{\mathbb{B}} := \operatorname{Sc} WV^{\dagger} = \frac{W^{+}(V^{+})^*+ W^{-}(V^{-})^*}{2}.
\]
It is not difficult to see that $\langle W,V \rangle_{\mathbb{B}}$ is a complex inner product in $\mathbb{B}$, and we have the norm
\[
|W|_{\mathbb{B}}:= \sqrt{\langle W,W \rangle_{\mathbb{B}}} = \sqrt{\frac{|W^{+}|^2+|W^{-}|^2}{2}}.
\]
where $|W^{\pm}|$ denotes the complex absolute value. The following inequality is immediate
\begin{equation}\label{equivalentnorms}
	\frac{1}{\sqrt{2}}|W^{\pm}|\leqslant |W|_{\mathbb{B}}\leqslant \frac{1}{\sqrt{2}}\left(|W^+|+|W^{-}|\right).
\end{equation}
\begin{remark}\label{remarknorm} 
A direct computation shows the following relations
\begin{eqnarray}
	W^{\dagger} & = &  \left(\operatorname{Sc}W\right)^*-\mathbf{j}\left(\operatorname{Vec}W\right)^* \label{involutionsecondform} \\
	\langle W,V \rangle_{\mathbb{B}} & = &  (\operatorname{Sc}W)(\operatorname{Sc}V)^*+(\operatorname{Vec}W)(\operatorname{Vec}V)^*, \label{innerproduct2ndfor} \\
	|W|_{\mathbb{B}} & = &  \sqrt{|\operatorname{Sc}W|^2+|\operatorname{Vec}W|^2}, \label{normsecondform}\\
	WV^{\dagger} & = & \langle W,V \rangle_{\mathbb{B}}+\mathbf{j}\langle W,\mathbf{j}V\rangle_{\mathbb{B}} \label{bicomplexinnerproduct} \\
	|WV|_{\mathbb{B}} & \leqslant &  \sqrt{2}|W|_{\mathbb{B}}|V|_{\mathbb{B}}.	\label{norminequality}
\end{eqnarray}
  Hence the Bicomplex product is a continuous operation in $\mathbb{B}$ and $\mathcal{R}(\mathbb{B})$ is open \cite[Prop. 3]{CamposBicomplex}.
\end{remark}

The exponential of a Bicomplex number $W$ is defined as follows \cite{CamposBicomplex}
\begin{equation}\label{expdef}
	e^W:= \mathbf{p}^+e^{W^+}+\mathbf{p}^{-}e^{W^{-}}.
\end{equation}
The Bicomplex exponential satisfies the properties: (i) $e^{W+V}=e^We^V$ for all $W,V\in \mathbb{B}$; (ii) $e^W\in \mathcal{R}(\mathbb{B})$ and $(e^W)^{-1}=e^{-W}$ for all $W\in \mathbb{B}$ (see \cite{CamposBicomplex}).
\subsection{Bicomplex analytic functions}
Let $\Omega\subset \mathbb{C}$ be a bounded domain. Consider a Bicomplex-valued function of the complex variable $z=x+iy\in \Omega$, $F:\Omega \rightarrow \mathbb{B}$. Hence $F=f_1+\mathbf{j}f_2$, where $f_1(z)=\operatorname{Sc}F(z)$ and $f_2(z)=\operatorname{Vec}F(z)$. By Proposition \ref{properties1}(3) we can write $F=\mathbf{p}^{+}F^{+}+\mathbf{p}^{-1}F^{-}$ where $F^{\pm}=f_1\mp if_2$. We say that $F\in C^k(\Omega; \mathbb{B})$ ($k\in \mathbb{N}_0$) iff $f_1,f_2\in C^k(\Omega)$. Similarly, if $1\leqslant p\leqslant \infty$, $F\in L_p(\Omega;\mathbb{B})$ iff $f_1,f_2\in L_p(\Omega; \mathbb{B})$. This is equivalent to the condition $F^{\pm}\in L_p(\Omega)$. For the case $p=2$, $L_2(\Omega; \mathbb{B})$ is a complex Hilbert space with the inner product and the norm
\begin{equation}\label{L2bicomplexnorm}
\langle F,G \rangle_{L_2(\Omega; \mathbb{B})} := \iint_{\Omega}\langle F(z),G(z)\rangle_{\mathbb{B}}dA_z, \quad \|F\|_{L_2(\Omega; \mathbb{B})}= \left(\iint_{\Omega}|F(z)|_{\mathbb{B}}^2dA_z\right)^{\frac{1}{2}}	
\end{equation}
Since $L_2(\Omega; \mathbb{B})$ can be regarded as the product space $L_2(\Omega)\times L_2(\Omega)$, and $L_2(\Omega)$ is separable \cite[Sec. 4.3]{brezis}, then $L_2(\Omega;\mathbb{B})$ is a separable complex Hilbert space. The Sobolev spaces $W^{2,k}(\Omega; \mathbb{B})$, with $k\in \mathbb{N}$, are defined in a similar way. Given $z=x+iy$ with $x,y\in \mathbb{R}$, we denote
\begin{equation}\label{bicomplexification}
\widehat{z}:=x+\mathbf{j}y \in \mathbb{B}.	
\end{equation}
 Note that if $z,z_0\in \mathbb{C}$, then 
\begin{equation}\label{variablez}
	\widehat{z}-\widehat{z_0}= \mathbf{p}^{+}(z^*-z^*_0)+\mathbf{p}^{-}(z^*-z_0^*).
\end{equation}
Denote $\mathbb{N}_0:= \mathbb{N}\cup \{0\}$. From Proposition \ref{properties1}(5) we have
\begin{equation}\label{powerz}
	\left(\widehat{z}-\widehat{z_0}\right)^n = \mathbf{p}^{+}(z^*-z_0^*)^n+\mathbf{p}^{-}(z-z_0)^n, \quad \forall n\in \mathbb{N}_0.
\end{equation}

\begin{remark}\label{remarpropbicomplexification}
	\begin{itemize}
		\item[(i)] A direct computation shows that $\mathbb{C}\ni z\mapsto \widehat{z}\in \mathbb{B}$ is a monomorphism of algebras, then $\{x+jy\,|\, x,y\in \mathbb{R}\}$ is a field isomorphic to $\mathbb{C}$,  and $\widehat{z}\dagger= \overline{\widehat{z}}= \widehat{z^*}$. 
		\item[(ii)] Using (\ref{variablez}) and (\ref{expdef}), it is straightforward to show that $e^{\widehat{z}}=e^x\left(\cos(y)+\mathbf{j}\sin(y)\right)$.
	In particular
	\begin{equation}\label{bicomplexcircle}
		e^{\mathbf{j}\theta}= \cos(\theta)+\mathbf{j}\sin(\theta), \quad \forall \theta \in \mathbb{R}.
	\end{equation}
	\end{itemize}
\end{remark}

The Bicomplex {\it Cauchy-Riemann} operators are defined by
\begin{equation}\label{defCR}
	\boldsymbol{\partial} := \frac{1}{2}\left(\frac{\partial}{\partial x}-\mathbf{j}\frac{\partial}{\partial y}\right), \quad \overline{\boldsymbol{\partial}} :=\frac{1}{2}\left(\frac{\partial}{\partial x}+\mathbf{j}\frac{\partial}{\partial y}\right).
\end{equation} The following relations hold
\begin{equation}\label{CRdecomposition}
	\overline{\boldsymbol{\partial}}= \mathbf{p}^{+}\frac{\partial}{\partial z}+\mathbf{p}^{-}\frac{\partial}{\partial z^*}  , \quad \boldsymbol{\partial} = \mathbf{p}^{+}\frac{\partial}{\partial z^*}+\mathbf{p}^{-}\frac{\partial}{\partial z},
\end{equation}
where $\frac{\partial}{\partial z}:= \frac{1}{2}\left(\frac{\partial}{\partial x}-i\frac{\partial}{\partial y}\right)$ and $\frac{\partial}{\partial z^*}:= \frac{1}{2}\left(\frac{\partial}{\partial x}+i\frac{\partial}{\partial y}\right)$ are the usual complex Cauchy-Riemann operators (see \cite{CamposBicomplex}). A function $W\in C^1(\Omega; \mathbb{B})$ is called $\mathbb{B}$-holomorphic or $\mathbb{B}$-analytic (anti-holomorphic or anti-analytic) if $\overline{\boldsymbol{\partial}}W=0$ ($\boldsymbol{\partial} W=0$). 

\begin{remark}\label{remarkdecompositionholomorphic}
\begin{itemize} 
\item[(i)]From Proposition \ref{properties1}(5) and (\ref{CRdecomposition}), a function  $W\in C^1(\Omega;\mathbb{B})$ is $\mathbb{B}$-holomorphic (anti-holomorphic) iff $W^{\pm}$ are anti-holomorphic and holomorphic, respectively, in the complex sense.

\item[(ii)]By (\ref{CRdecomposition}) and (\ref{powerz}), the powers $\{(\widehat{z}-\widehat{z_0})^n\}_{n=0}^{\infty}$ are $\mathbb{B}$-holomorphic and $\boldsymbol{\partial}(\widehat{z}-\widehat{z_0})^n=n(\widehat{z}-\widehat{z_0})^{n-1}$. In a similar way, the powers $\{(\widehat{z}^{\dagger}-\widehat{z}_0^{\dagger})^n \}_{n=0}^{\infty}$ are $\mathbb{B}$-anti-holomorphic. By (\ref{expdef}) , $e^{\widehat{z}}$ is $\mathbb{B}$-holomorphic.
\end{itemize}
\end{remark}

The space $C(\Omega; \mathbb{B})$ can be endowed with a standard topology as follows. Let $\{K_n\}_{n=1}^{\infty}$ be a sequence of compact subsets of $\Omega$ such that: (I) $K_n \subset \operatorname{Int} K_{n+1}$\footnote{$\operatorname{Int}K_n$ denotes the interior of the set $K_n$}, $\forall n\in \mathbb{N}$; (II) $\Omega=\bigcup_{n=1}^{\infty}K_n$. 

One can choose, for example, $K_n= \left\{z\in \Omega\, |\, \operatorname{dist}(z,\partial\Omega)\geqslant \frac{1}{n} \right\}\cap B_n^{\mathbb{C}}(0)$.  The topology of $C(\Omega;\mathbb{B})$ is generated by the family of semi-norms $\left\{\|F\|_n:= \displaystyle \max_{z\in K_n}|F(z)|_{\mathbb{B}} \right\}_{n=1}^{\infty}$. With this topology $C(\Omega;\mathbb{B})$ is a Fr\'echet space. Properties (I) and (II) implies that for any compact $K\subset \Omega$, there exists $N\in \mathbb{N}$ such that $K\subset K_N$. Then the convergence in $ C(\Omega; \mathbb{B})$ is induced by the uniform convergence on compact subsets of $\Omega$. The topology is independent of the choice of $\{K_n\}_{n=1}^{\infty}$ (for the proof of these facts see
\cite[Ch. VII, Sec. 1]{conway} or \cite[Prop. 4. 39]{folland}).
\begin{remark}\label{remarkconvergence}
Using Remark \ref{remarknorm} and (\ref{equivalentnorms}), it is not difficult to see that a sequence $\{W_n\}\subset C(\Omega; \mathbb{B})$ converges to $W\in C(\Omega;\mathbb{B})$ in this topology iff $\{\operatorname{Sc}W_n\}$ and $\{\operatorname{Vec}W_n\}$  converges uniformly on compact subsets of $\Omega$ to $\operatorname{Sc}W$ and $\operatorname{Vec}W_n$, respectively (or equivalently, if $\{W_n^{\pm}\}$ converges uniformly on compact subsets to $W^{\pm}$).
	
\end{remark}
 
\begin{proposition}\label{propsitiontaylor}
	Denote by $\operatorname{Hol}(\Omega; \mathbb{B})$ the class of $\mathbb{B}$-holomorphic functions in $\Omega$.
	\begin{itemize}
		\item[(i)] $\operatorname{Hol}(\Omega; \mathbb{B})$ is a closed subspace of $C(\Omega; \mathbb{B})$ and $\boldsymbol{\partial}: \operatorname{Hol}(\Omega; \mathbb{B}) \rightarrow \operatorname{Hol}(\Omega; \mathbb{B})$ is a continuous operator.
		\item[(ii)] $W\in  \operatorname{Hol}(\Omega; \mathbb{B})$ iff for all $z_0\in \Omega$, $W$ can be written as its Taylor series in the disk $B_R^{\mathbb{C}}(z_0)\subset \Omega$, with $R=\operatorname{dist}(z_0,\partial\Omega)$, that is, 
		\begin{equation}\label{bicomplextaylorseries}
			W(z)= \sum_{n=0}^{\infty}\frac{\boldsymbol{\partial}^nW(z_0)}{n!}(\hat{z}-\hat{z_0})^n, \quad  z\in B_R^{\mathbb{C}}(z_0),
		\end{equation}
	 and series converges absolutely and uniformly on compact subsets of $B_R^{\mathbb{C}}(z_0)$.
	\end{itemize}
\end{proposition}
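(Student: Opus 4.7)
The strategy is to exploit the idempotent decomposition $W=\mathbf{p}^+W^++\mathbf{p}^-W^-$ so as to reduce both claims to classical one-variable complex analysis. Recall from Remark~\ref{remarkdecompositionholomorphic}(i) that $W\in\operatorname{Hol}(\Omega;\mathbb{B})$ if and only if $W^+$ is complex anti-holomorphic and $W^-$ is complex holomorphic, and from Remark~\ref{remarkconvergence} that convergence in $C(\Omega;\mathbb{B})$ amounts to uniform convergence on compact subsets of both components $W^{\pm}$.

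For (i), closedness follows from the Weierstrass theorem: if $W_n\in \operatorname{Hol}(\Omega;\mathbb{B})$ and $W_n\to W$ in $C(\Omega;\mathbb{B})$, then $W_n^{\pm}\to W^{\pm}$ uniformly on compacts, so $W^-$ is holomorphic and $W^+$ is anti-holomorphic, whence $W\in\operatorname{Hol}(\Omega;\mathbb{B})$. From (\ref{CRdecomposition}) one reads off $(\boldsymbol{\partial}W)^+=\partial_{z^*}W^+$ and $(\boldsymbol{\partial}W)^-=\partial_{z}W^-$; derivatives of (anti-)holomorphic functions remain (anti-)holomorphic, so $\boldsymbol{\partial}$ maps $\operatorname{Hol}(\Omega;\mathbb{B})$ into itself. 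Continuity then reduces, compact set by compact set, to the classical Cauchy estimate $\max_K|\partial_z g|\leqslant C_{K,K'}\max_{K'}|g|$ with $K\subset\operatorname{Int} K'$ and $K'$ compact in $\Omega$, applied to each of $W^{\pm}$.

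For (ii), fix $W\in\operatorname{Hol}(\Omega;\mathbb{B})$, $z_0\in\Omega$ and $R=\operatorname{dist}(z_0,\partial\Omega)$. Induction on the preceding formula yields $\boldsymbol{\partial}^nW(z_0)=\mathbf{p}^+(\partial_{z^*})^nW^+(z_0)+\mathbf{p}^-(\partial_{z})^nW^-(z_0)$. The classical Taylor expansions of the holomorphic $W^-$ (in $z-z_0$) and of the anti-holomorphic $W^+$ (in $z^*-z_0^*$) converge absolutely and uniformly on compact subsets of $B_R^{\mathbb{C}}(z_0)$. Multiplying by $\mathbf{p}^{\pm}$ and invoking (\ref{powerz}) together with $(\mathbf{p}^{\pm})^2=\mathbf{p}^{\pm}$ and $\mathbf{p}^+\mathbf{p}^-=0$, the two scalar series collapse coefficient-wise into the single Bicomplex series (\ref{bicomplextaylorseries}). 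Conversely, every partial sum of the right-hand side of (\ref{bicomplextaylorseries}) is $\mathbb{B}$-holomorphic by Remark~\ref{remarkdecompositionholomorphic}(ii), so if $W$ admits such a representation in a disk around each point of $\Omega$, part (i) implies $W\in\operatorname{Hol}(\Omega;\mathbb{B})$.

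I expect no serious obstacle: the only bookkeeping is verifying that the two componentwise series, when weighted by the idempotents $\mathbf{p}^{\pm}$ and paired with (\ref{powerz}), recombine coefficient by coefficient into the Bicomplex Taylor series, which is a purely algebraic manipulation using $\mathbf{p}^+\mathbf{p}^-=0$. No separate Bicomplex Cauchy integral formula is required, since every analytic ingredient descends componentwise to the standard complex case.
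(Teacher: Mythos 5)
Your proof is correct and follows essentially the same route as the paper's: both reduce everything to classical one-variable complex analysis via the idempotent decomposition $W=\mathbf{p}^{+}W^{+}+\mathbf{p}^{-}W^{-}$, invoking Remark~\ref{remarkdecompositionholomorphic}(i), (\ref{CRdecomposition}) and (\ref{powerz}). You merely spell out the componentwise Weierstrass/Cauchy-estimate arguments and the coefficientwise recombination of the two scalar Taylor series that the paper leaves as a citation-level sketch.
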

\begin{proof}
    \begin{itemize}
        \item[(i)] That  $\operatorname{Hol}(\Omega; \mathbb{B})$ is closed follows from Remark \ref{remarkdecompositionholomorphic}(i) and the fact that complex holomorphic and anti-holomorphic functions are closed in $C(\Omega)$. The continuity of $\boldsymbol{\partial}$ follows from (\ref{CRdecomposition}) and the corresponding continuity of the complex Cauchy-Riemann operators in the spaces of complex holomorphic and anti-holomorphic functions (see \cite[Ch. VII, Sec. 2]{conway}). 
        \item[(ii)] It follows from the fact that $W^+$ and $W^{-}$ are  complex holomorphic and anti-holomorphic, from (\ref{CRdecomposition}) and from Remark \ref{remarkconvergence}.

    \end{itemize}
\end{proof}

\subsection{The Bicomplex analytic Bergman space}\label{secbicomplexanalytic}

We introduce the Bicomplex analytic Bergman space in $\Omega$ as
\begin{equation}
	\mathcal{A}^2(\Omega; \mathbb{B}):= \left\{W\in \operatorname{Hol}(\Omega; \mathbb{B})\, |\, W\in L_2(\Omega; \mathbb{B})  \right\}.
\end{equation}

Let $W\in \mathcal{A}^2(\Omega; \mathbb{B})$. By Remark \ref{remarkdecompositionholomorphic}(i) and the definition of $L_2(\Omega; \mathbb{B})$, $W^+\in \overline{\mathcal{A}}^2(\Omega)$ and $W^{-}\in \mathcal{A}^2(\Omega)$, where $\mathcal{A}^2(\Omega)$ and $\overline{\mathcal{A}}^2(\Omega)$ are  the complex analytic and  anti-analytic Bergman spaces. Then $\mathcal{A}^2(\Omega; \mathbb{B})$ can be regarded as the direct sum of the complex Hilbert spaces $\mathcal{A}^2(\Omega)\oplus\overline{\mathcal{A}}^2(\Omega)$. Thus, $\mathcal{A}^2(\Omega; \mathbb{B})$ is  a complex Hilbert space, and since $\mathcal{A}^2(\Omega;\mathbb{B})$ is subspace of the separable complex Hilbert space $L_2(\Omega;\mathbb{B})$, then $\mathcal{A}^2(\Omega;\mathbb{B})$ is separable \cite[Prop. 3.25]{brezis}. For each $z\in \Omega$, a direct computation shows that $W(z)$ can be written in terms of the Bergman kernels $L_{\Omega}(z,\zeta)$ and $K_{\Omega}(z,\zeta)$ of $\overline{\mathcal{A}}^2(\Omega)$ and $\mathcal{A}^2(\Omega)$ as follows

\begin{equation*}
	W(z) =\iint_{\Omega} W(z)\left(\mathbf{p}^+L_{\Omega}(z,\zeta)+\mathbf{p}^{-}K_{\Omega}(z,\zeta)\right)dA_{\zeta}.
\end{equation*}
Then the function $\mathscr{K}_{\Omega}(z,\zeta):= P^+L_{\Omega}(z,\zeta)+P^{-}K_{\Omega}(z,\zeta)$ has similar properties to a reproducing Bergman kernel. In particular, for $\Omega=\mathbb{D}$, with $\mathbb{D}:= B_1^{\mathbb{C}}(0)$, we obtain 
\begin{equation}\label{Bergmankernel}
	\mathscr{K}_{\mathbb{D}}(z,\zeta)=P^+\frac{1}{1-z\zeta^*}+P^{-}\frac{1}{1-z^*\zeta }= (1-\hat{z}\hat{\zeta}\dagger)^{-1}
\end{equation}

\begin{proposition}\label{proporthonormalbasispowers}
	Consider $\Omega= B_R^{\mathbb{C}}(0)$, for some $R>0$. Then the powers $\{\hat{z}^n, \mathbf{j}\hat{z}^n \}$ are an orthogonal basis for the complex Hilbert space $\mathcal{A}^2(B_R^{\mathbb{C}}(0); \mathbb{B})$.
\end{proposition}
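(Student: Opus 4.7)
The plan is to reduce the statement to the classical fact that $\{z^n\}_{n\geq 0}$ is an orthogonal basis of the scalar complex Bergman space $\mathcal{A}^2(B_R^{\mathbb{C}}(0))$ and that, by conjugation, $\{(z^*)^n\}_{n\geq 0}$ is one of $\overline{\mathcal{A}}^2(B_R^{\mathbb{C}}(0))$. The bridge is the idempotent decomposition $W=\mathbf{p}^+W^+ + \mathbf{p}^-W^-$, together with the identity from Proposition \ref{properties1}(3) that $\mathbf{j}=\mathbf{p}^+(-i)+\mathbf{p}^-(i)$.

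First I would rewrite the candidate family using (\ref{powerz}) with $z_0=0$, namely
\begin{equation*}
\hat{z}^n = \mathbf{p}^+(z^*)^n + \mathbf{p}^- z^n, \qquad \mathbf{j}\hat{z}^n = -i\,\mathbf{p}^+(z^*)^n + i\,\mathbf{p}^- z^n,
\end{equation*}
so that inside the complex Hilbert space the two families $\{\hat z^n,\mathbf{j}\hat z^n\}$ and $\{\mathbf{p}^+(z^*)^n,\mathbf{p}^- z^n\}$ have the same (complex) linear span, via $\mathbf{p}^+(z^*)^n=\tfrac12(\hat z^n+i\mathbf{j}\hat z^n)$ and $\mathbf{p}^- z^n=\tfrac12(\hat z^n-i\mathbf{j}\hat z^n)$.

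Second, for orthogonality I would combine (\ref{L2bicomplexnorm}) with the formula $\langle W,V\rangle_{\mathbb{B}}=\tfrac12[W^+(V^+)^*+W^-(V^-)^*]$ from Proposition \ref{properties1}(3) to obtain
\begin{equation*}
\langle W,V\rangle_{L_2(B_R;\mathbb{B})} \;=\; \tfrac12\bigl(\langle W^+,V^+\rangle_{L_2(B_R)}+\langle W^-,V^-\rangle_{L_2(B_R)}\bigr).
\end{equation*}
Applied to the transformed family, the cross-terms $\langle\mathbf{p}^+(z^*)^n,\mathbf{p}^- z^m\rangle_{L_2}$ vanish identically (one of each pair of components is zero), and the diagonal-type terms reduce to the classical polar-coordinate integrals on a disk, which give $\tfrac{\pi R^{2n+2}}{n+1}\delta_{nm}$. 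Re-expressing this in terms of the original family $\{\hat z^n,\mathbf{j}\hat z^n\}$ then yields mutual orthogonality (in particular $\langle\hat z^n,\mathbf{j}\hat z^m\rangle=0$ for all $n,m$).

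Third, for completeness, I would take an arbitrary $W\in\mathcal{A}^2(B_R^{\mathbb{C}}(0);\mathbb{B})$ and use Remark \ref{remarkdecompositionholomorphic}(i) together with the $L_2$ condition to see that $W^+\in\overline{\mathcal{A}}^2(B_R^{\mathbb{C}}(0))$ and $W^-\in\mathcal{A}^2(B_R^{\mathbb{C}}(0))$. The classical orthogonal expansions yield $W^-=\sum_{n\geq0}\beta_n z^n$ and $W^+=\sum_{n\geq 0}\alpha_n(z^*)^n$ with convergence in $L_2(B_R^{\mathbb{C}}(0))$, and by the orthogonal-sum formula above the corresponding series $\sum_{n\geq 0}[\mathbf{p}^+\alpha_n(z^*)^n+\mathbf{p}^-\beta_n z^n]$ converges to $W$ in $L_2(B_R^{\mathbb{C}}(0);\mathbb{B})$. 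Using step one in reverse, the $n$-th summand equals $a_n\hat z^n+b_n\mathbf{j}\hat z^n$ with $a_n=\tfrac12(\alpha_n+\beta_n)$, $b_n=\tfrac{i}{2}(\alpha_n-\beta_n)$, producing the desired $L_2$-expansion in the proposed basis.

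There is no genuine obstacle here; the only delicate point is bookkeeping the idempotent decomposition carefully so that the Hilbert-space structure of $\mathcal{A}^2(B_R^{\mathbb{C}}(0);\mathbb{B})$ is recognized (up to the factor $\tfrac{1}{\sqrt{2}}$) as an orthogonal direct sum of the two scalar Bergman spaces, after which everything is classical.
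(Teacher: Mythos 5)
Your proof is correct. It takes a genuinely different route from the paper's: the paper computes the inner products directly from the scalar/vectorial form $\langle W,V\rangle_{\mathbb{B}}=(\operatorname{Sc}W)(\operatorname{Sc}V)^{*}+(\operatorname{Vec}W)(\operatorname{Vec}V)^{*}$ by polar-coordinate integrals (arriving at (\ref{innerproducpowers})), and for completeness it adapts Duren's classical disk argument to the Bicomplex setting via the Taylor representation (\ref{bicomplextaylorseries}); you instead diagonalize through the idempotents $\mathbf{p}^{\pm}$, recognize $\mathcal{A}^2(B_R^{\mathbb{C}}(0);\mathbb{B})$ as the orthogonal sum $\overline{\mathcal{A}}^2(B_R^{\mathbb{C}}(0))\oplus\mathcal{A}^2(B_R^{\mathbb{C}}(0))$ (a structural fact the paper records in Subsection \ref{secbicomplexanalytic} but does not exploit in this proof), and import the classical completeness of $\{z^n\}$ wholesale. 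Your route buys a real shortcut on completeness --- nothing is re-proved, only the invertible change of basis $\mathbf{p}^{+}(z^{*})^n=\tfrac12(\hat z^n+i\mathbf{j}\hat z^n)$, $\mathbf{p}^{-}z^n=\tfrac12(\hat z^n-i\mathbf{j}\hat z^n)$ must be tracked --- whereas the paper's computation is self-contained in the $\operatorname{Sc}/\operatorname{Vec}$ language used throughout the rest of the article and directly yields the explicit norms needed for the normalization (\ref{orthonormalbasispowers}). Your constants are consistent: the factor $\tfrac12$ in $\langle W,V\rangle_{\mathbb{B}}=\tfrac12\bigl[W^{+}(V^{+})^{*}+W^{-}(V^{-})^{*}\bigr]$ combines with the two equal diagonal integrals to give $\|\hat z^n\|_{L_2}^2=\tfrac{\pi R^{2n+2}}{n+1}$, matching the paper, and your verification that $\langle\hat z^n,\mathbf{j}\hat z^m\rangle=0$ for \emph{all} $n,m$ (including $n=m$) is exactly the point needed for the family, and not merely the degrees, to be orthogonal.
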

\begin{proof}
	Similar to the complex case \cite[Ch. I]{duren}, using the characterization of the inner product in $\mathbb{B}$ in terms of the scalar and vectorial part, a simple computation with polar coordinates shows the following relations for each $n,m\in \mathbb{N}_0$
	\begin{equation}\label{innerproducpowers}
	 \langle \widehat{z}^n, \mathbf{j}\widehat{z}^m\rangle_{L_2(B_R^{\mathbb{C}}(0); \mathbb{B})}= 0, \quad \langle \Lambda\widehat{z}^n, \Lambda\widehat{z}^m\rangle_{L_2(B_R^{\mathbb{C}}(0); \mathbb{B})}= \frac{2\pi R^{n+m+2}}{n+m+2} \delta_{m,n}, \; \mbox{ where  } \Lambda \in \{1, \mathbf{j}\}.
	\end{equation}
In particular $\left\|\Lambda \hat{z}^n \right\|_{L_2(B_R^{\mathbb{C}}(0); \mathbb{B})}= \sqrt{\frac{\pi}{n+1}}R^{n+1}$, $n\in \mathbb{N}_0$, $\Lambda\in \{1,\mathbf{j}\}$. The proof of completeness is similar to that of the classical complex case given in \cite[Ch. I]{duren}, adapted to the inner product (\ref{L2bicomplexnorm}) and using representation (\ref{bicomplextaylorseries})
\end{proof}

The corresponding orthonormal basis is given by $\{e_n(z; R), \mathbf{j}e_n(z; R)\}_{n=0}^{\infty}$, where 
\begin{equation}\label{orthonormalbasispowers}
	e_n(z; R):= \sqrt{\frac{n+1}{\pi }}\frac{\widehat{z}^n}{R^{n+1}}.
\end{equation}	

\begin{remark}\label{basismodulo}
	The system $\{e_n(z; R), \mathbf{j}e_n(z; R)\}_{n=0}^{\infty}$ is an orthonormal basis for the {\bf complex} Hilbert space $\mathcal{A}^2(B_R^{\mathbb{C}}(0); \mathbb{B})$. If $W\in \mathcal{A}^2(B_R^{\mathbb{C}}(0); \mathbb{B})$, due to (\ref{bicomplexinnerproduct}) we have 
	\begin{equation*}
			W(z) = \sum_{n=0}^{\infty}\left\{\iint_{B_R^{\mathbb{C}}(0)}W(z)\left(e_n(z; R)\right)^{\dagger}dA_z\right\}e_n(z;R).
	\end{equation*}
 In this sense $\{e_n(z; R)\}_{n=0}^{\infty}$ is an orthonormal ``basis" if we consider Bicomplex coefficients. Actually, the spaces $L_2(B_R^{\mathbb{C}}(0); \mathbb{B})$ and $\mathcal{A}^2(B_R^{\mathbb{C}}(0); \mathbb{B})$ are $\mathbb{B}$-modules and it is possible to introduce a $\mathbb{B}$-valued inner product taking the integral $\iint_{B_{\rho}^{\mathbb{C}}(0)} W(z)V^{\dagger}(z)dA_z$ (note that  $\langle W, V\rangle_{L_2(B_R^{\mathbb{C}}(0); \mathbb{B})}$ is the scalar part of this integral). It is possible to develop a theory for $\mathbb{B}$-valued Hilbert spaces, see, e.g., \cite{rochon,ShapiroBicomplex,perezregalado}.	
\end{remark}

\section{The Bicomplex Vekua equation}

\subsection{Main results about the Vekua equation}
Let $\Omega\subset \mathbb{C}$ be a bounded domain and $f\in C^2(\Omega)$ a scalar function that does not vanish in $\Omega$ (i.e., $f(z)\neq 0$ for all $z\in \Omega$). Set $q_f= \frac{\bigtriangleup f}{f}$, where $\bigtriangleup:= \frac{\partial^2}{\partial x^2}+\frac{\partial^2}{\partial y^2}$ is the Laplacian operator. The study of the Schr\"odinger operator 
\begin{equation}\label{schrodingereq}
	\bigtriangleup f(z)-q_f(z)f(z)=0, \quad z\in \Omega,
\end{equation} 
for the case when $f$ is real-valued leads us to the following factorization (see \cite{kravpseudo1})
\begin{align*}
	\frac{1}{4}\left(\bigtriangleup -q_f(z)\right)\varphi(z) &= \left(\frac{\partial}{\partial z^*}+\frac{f_{z}}{f}C\right)\left(\frac{\partial}{\partial z}-\frac{f_{z}}{f}C\right)\varphi(z)=\left(\frac{\partial}{\partial z}+\frac{f_{z^*}}{f}C\right)\left(\frac{\partial}{\partial z^*}-\frac{f_{z^*}}{f}C\right)\varphi(z),
\end{align*}
valid for real-valued functions $\varphi\in C^2(\Omega)$ (here, $f_{z^*}:= \frac{\partial f}{\partial z^*}$ and $C$ denotes the operator of complex conjugation). In the case of a complex-valued function $f$, it is known that the factorization is given in terms of Bicomplex-valued functions and the Bicomplex Cauchy-Riemann operators.

\begin{theorem}[\cite{pseudoanalyticvlad}, Th. 148]
	Let $f\in C^2(\Omega)$ a scalar complex-valued function. Then for any scalar complex-valued function $\varphi\in C^2(\Omega)$, the following equalities hold
	\begin{equation}\label{complexfactorization}
	  \frac{1}{4}\left(\bigtriangleup -q_f(z)\right)\varphi(z) = \left(\overline{\boldsymbol{\partial}}+\frac{\boldsymbol{\partial}f}{f}C_{\mathbb{B}}\right)\left(\boldsymbol{\partial}-\frac{\boldsymbol{\partial}f}{f}C_{\mathbb{B}}\right)\varphi(z)=\left(\boldsymbol{\partial}+\frac{\overline{\boldsymbol{\partial}}f}{f}C_{\mathbb{B}}\right)\left(\overline{\boldsymbol{\partial}}-\frac{\overline{\boldsymbol{\partial}}f}{f}C_{\mathbb{B}}\right)\varphi(z),	
	\end{equation}
where $C_{\mathbb{B}}$ denotes the operator of Bicomplex conjugation.
\end{theorem}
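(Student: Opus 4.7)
The plan is to verify both factorizations by direct expansion, relying on two facts: (a) since $\varphi$ and $f$ are scalar complex-valued, $C_{\mathbb{B}}\varphi=\varphi$ and $C_{\mathbb{B}}f=f$ (the Vectorial part is zero); (b) the Bicomplex Cauchy–Riemann operators satisfy the commutations $C_{\mathbb{B}}\boldsymbol{\partial}=\overline{\boldsymbol{\partial}}C_{\mathbb{B}}$ and $C_{\mathbb{B}}\overline{\boldsymbol{\partial}}=\boldsymbol{\partial}C_{\mathbb{B}}$, which follow from the definitions in (\ref{defCR}) since $C_{\mathbb{B}}$ sends $\mathbf{j}\mapsto -\mathbf{j}$. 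Combined with the elementary identity $4\,\overline{\boldsymbol{\partial}}\boldsymbol{\partial}=4\,\boldsymbol{\partial}\overline{\boldsymbol{\partial}}=\bigtriangleup$ (compute $(\partial_x+\mathbf{j}\partial_y)(\partial_x-\mathbf{j}\partial_y)=\partial_x^2-\mathbf{j}^2\partial_y^2=\bigtriangleup$), these are all the tools needed.

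Write $g:=\boldsymbol{\partial}f/f$, so that (a) gives $C_{\mathbb{B}}g=\overline{\boldsymbol{\partial}}f/f$. Expanding the first product and using $C_{\mathbb{B}}\varphi=\varphi$ yields
\begin{equation*}
\left(\overline{\boldsymbol{\partial}}+gC_{\mathbb{B}}\right)\left(\boldsymbol{\partial}\varphi-g\varphi\right)
=\overline{\boldsymbol{\partial}}\boldsymbol{\partial}\varphi-\overline{\boldsymbol{\partial}}(g\varphi)+g\,C_{\mathbb{B}}(\boldsymbol{\partial}\varphi)-g\,(C_{\mathbb{B}}g)\varphi.
\end{equation*}
By the commutation, $C_{\mathbb{B}}(\boldsymbol{\partial}\varphi)=\overline{\boldsymbol{\partial}}\varphi$. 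Applying the Leibniz rule to $\overline{\boldsymbol{\partial}}(g\varphi)=(\overline{\boldsymbol{\partial}}g)\varphi+g\,\overline{\boldsymbol{\partial}}\varphi$, the two terms involving $g\,\overline{\boldsymbol{\partial}}\varphi$ cancel. What remains is $\overline{\boldsymbol{\partial}}\boldsymbol{\partial}\varphi-\bigl[\overline{\boldsymbol{\partial}}g+g\,C_{\mathbb{B}}g\bigr]\varphi$. Now compute
\begin{equation*}
\overline{\boldsymbol{\partial}}g+g\,C_{\mathbb{B}}g
=\frac{\overline{\boldsymbol{\partial}}\boldsymbol{\partial}f}{f}-\frac{\boldsymbol{\partial}f\,\overline{\boldsymbol{\partial}}f}{f^2}+\frac{\boldsymbol{\partial}f\,\overline{\boldsymbol{\partial}}f}{f^2}=\frac{\overline{\boldsymbol{\partial}}\boldsymbol{\partial}f}{f}=\frac{\bigtriangleup f}{4f}=\frac{q_f}{4},
\end{equation*}
so the expression collapses to $\tfrac{1}{4}\bigtriangleup\varphi-\tfrac{1}{4}q_f\varphi$, which is the desired identity.

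The second factorization is obtained by the same calculation with the roles of $\boldsymbol{\partial}$ and $\overline{\boldsymbol{\partial}}$ interchanged; the commutation $C_{\mathbb{B}}\overline{\boldsymbol{\partial}}=\boldsymbol{\partial}C_{\mathbb{B}}$ then plays the role previously played by $C_{\mathbb{B}}\boldsymbol{\partial}=\overline{\boldsymbol{\partial}}C_{\mathbb{B}}$, and the product $\boldsymbol{\partial}\overline{\boldsymbol{\partial}}=\overline{\boldsymbol{\partial}}\boldsymbol{\partial}$ guarantees we still land on $\tfrac{1}{4}\bigtriangleup\varphi$.

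The only subtle point — and the place where a careless reader might err — is the behavior of $C_{\mathbb{B}}$ on $g=\boldsymbol{\partial}f/f$. One must not confuse Bicomplex conjugation with the scalar complex conjugation $*$: since $f$ has zero Vectorial part but generally nontrivial complex imaginary part, $C_{\mathbb{B}}f=f$ while $f^{*}\neq f$ in general. It is precisely the identity $C_{\mathbb{B}}(\boldsymbol{\partial}f)=\overline{\boldsymbol{\partial}}f$ that transmutes $\boldsymbol{\partial}f$ into $\overline{\boldsymbol{\partial}}f$ inside the cross-term, producing the cancellation $-\tfrac{\boldsymbol{\partial}f\,\overline{\boldsymbol{\partial}}f}{f^2}+\tfrac{\boldsymbol{\partial}f\,\overline{\boldsymbol{\partial}}f}{f^2}=0$ that leaves only $\overline{\boldsymbol{\partial}}\boldsymbol{\partial}f/f$. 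This is where the Bicomplex (rather than merely complex) formalism is essential when $f$ is not real-valued.
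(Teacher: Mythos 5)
Your verification is correct: the commutation relations $C_{\mathbb{B}}\boldsymbol{\partial}=\overline{\boldsymbol{\partial}}C_{\mathbb{B}}$, the identity $4\overline{\boldsymbol{\partial}}\boldsymbol{\partial}=\bigtriangleup$, the multiplicativity of $C_{\mathbb{B}}$, and the cancellation $\overline{\boldsymbol{\partial}}g+gC_{\mathbb{B}}g=\overline{\boldsymbol{\partial}}\boldsymbol{\partial}f/f=q_f/4$ are all valid, and your remark distinguishing $C_{\mathbb{B}}f=f$ from $f^{*}\neq f$ is exactly the right subtlety for complex-valued $f$. The paper states this result as a citation of \cite{pseudoanalyticvlad} without reproducing a proof, and your direct expansion is the standard argument given there.
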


The {\bf main Bicomplex Vekua equation} associated to $f$ is given by
\begin{equation}\label{mainvekua}
	\overline{\boldsymbol{\partial}}W(z)= \frac{\overline{\boldsymbol{\partial}}f(z)}{f(z)}\overline{W}(z), \quad z\in \Omega.
\end{equation}
We consider classical solutions $W\in C^2(\Omega; \mathbb{B})$ of (\ref{mainvekua}). These functions are a special kind of pseudoanalytic functions (see \cite[Ch. II]{pseudoanalyticvlad}). The following result holds.

\begin{theorem}[\cite{pseudoanalyticvlad}, Th. 150]\label{theoremvekuaconjugate}
	Let $W\in C^1(\Omega; \mathbb{B})$. If $W$ is a solution of (\ref{mainvekua}), then $u=\operatorname{Sc}W$ is a solution of (\ref{schrodingereq}), and $v=\operatorname{Vec}W$ is a solution of the Darboux associated equation
	\begin{equation}\label{darbouxschrodinger}
		\left(\bigtriangleup -q_{\frac{1}{f}}(z)\right)v(z)= 0, \quad \mbox{ with } q_{\frac{1}{f}}= 2\frac{f_x^2+f_y^2}{f^2}-q_f = f\bigtriangleup\left(\frac{1}{f}\right),
	\end{equation} 
where $f_x=\frac{\partial f}{\partial x}$ and $f_y=\frac{\partial f}{\partial y}$.
\end{theorem}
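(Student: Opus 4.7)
The plan is to reduce the statement to an algebraic manipulation of the scalar factorization (\ref{complexfactorization}), applied both to $f$ and to $1/f$. Set $\alpha:=\overline{\boldsymbol{\partial}}f/f$ and (assuming the mild regularity upgrade $W\in C^{2}$, which is automatic for solutions of the first-order elliptic system (\ref{mainvekua})) write $W=u+\mathbf{j}v$ with scalar complex-valued $u,v\in C^{2}(\Omega)$. The main Vekua equation (\ref{mainvekua}) reads $(\overline{\boldsymbol{\partial}}-\alpha C_{\mathbb{B}})W=0$. Using that $C_{\mathbb{B}}(\mathbf{j}v)=-\mathbf{j}v$ and $\overline{\boldsymbol{\partial}}(\mathbf{j}v)=\mathbf{j}\overline{\boldsymbol{\partial}}v$, a direct expansion of this operator on the sum $u+\mathbf{j}v$ yields the decomposition
\begin{equation*}
(\overline{\boldsymbol{\partial}}-\alpha C_{\mathbb{B}})u \;+\; \mathbf{j}\,(\overline{\boldsymbol{\partial}}+\alpha C_{\mathbb{B}})v \;=\; 0.
\end{equation*}

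Next I would invoke (\ref{complexfactorization}) twice. Applied to $f$ it gives $\tfrac{1}{4}(\bigtriangleup-q_{f})\varphi=(\boldsymbol{\partial}+\alpha C_{\mathbb{B}})(\overline{\boldsymbol{\partial}}-\alpha C_{\mathbb{B}})\varphi$ for scalar $\varphi$. For $g=1/f$ one has $\overline{\boldsymbol{\partial}}g/g=-\alpha$, and the identity $q_{1/f}=2(f_{x}^{2}+f_{y}^{2})/f^{2}-q_{f}=f\bigtriangleup(1/f)$ follows by straightforward differentiation of $1/f$; then the second instance of the factorization reads $\tfrac{1}{4}(\bigtriangleup-q_{1/f})\varphi=(\boldsymbol{\partial}-\alpha C_{\mathbb{B}})(\overline{\boldsymbol{\partial}}+\alpha C_{\mathbb{B}})\varphi$.

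The central step is to apply the outer operator $(\boldsymbol{\partial}+\alpha C_{\mathbb{B}})$ to the decomposed Vekua equation above. The short commutation
\begin{equation*}
(\boldsymbol{\partial}+\alpha C_{\mathbb{B}})[\mathbf{j}X] \;=\; \mathbf{j}\,(\boldsymbol{\partial}-\alpha C_{\mathbb{B}})X,
\end{equation*}
which follows from $C_{\mathbb{B}}(\mathbf{j}X)=-\mathbf{j}\overline{X}$ together with the commutativity $\mathbf{j}\alpha=\alpha\mathbf{j}$, lets me pull the factor $\mathbf{j}$ outside at the price of flipping the sign of $\alpha$. Combining this with the two factorizations transforms the decomposed Vekua equation into
\begin{equation*}
\tfrac{1}{4}(\bigtriangleup-q_{f})u \;+\; \mathbf{j}\,\tfrac{1}{4}(\bigtriangleup-q_{1/f})v \;=\; 0.
\end{equation*}
Since $u$ and $v$ are scalar complex-valued, both $(\bigtriangleup-q_{f})u$ and $(\bigtriangleup-q_{1/f})v$ are scalar, so the scalar and vectorial parts of this identity must vanish separately; these are precisely (\ref{schrodingereq}) for $u$ and (\ref{darbouxschrodinger}) for $v$.

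The only delicate point is the correct bookkeeping of signs arising from the anticommutation of $C_{\mathbb{B}}$ with $\mathbf{j}$; once this is handled the rest of the argument is a direct calculation, and I do not expect a genuine obstacle beyond it.
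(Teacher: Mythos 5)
Your proof is correct and follows essentially the same route as the source the paper relies on: the paper gives no proof of this statement (it is quoted from \cite{pseudoanalyticvlad}, Th.~150), and the standard argument there is precisely your computation --- decompose $(\overline{\boldsymbol{\partial}}-\alpha C_{\mathbb{B}})W$ on $u+\mathbf{j}v$, apply the outer factor and use the factorization (\ref{complexfactorization}) for $f$ and for $1/f$ (with $\overline{\boldsymbol{\partial}}(1/f)/(1/f)=-\alpha$), then separate scalar and vectorial parts. The one caveat is your parenthetical claim that the upgrade $W\in C^{2}$ is ``automatic'' at the stated $C^{1}$ regularity: with $f$ only in $C^{2}$ (so $\alpha\in C^{1}$) the bootstrap gives $C^{1,\beta}$ rather than classical $C^{2}$ in general, but this is harmless in context since the paper explicitly restricts attention to classical solutions $W\in C^{2}(\Omega;\mathbb{B})$ of (\ref{mainvekua}).
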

The potential $q_{\frac{1}{f}}$ is called the Darboux transformation of $q_f$ \cite{darbouxorigin}. Denote the space of solutions of the main Vekua equation (\ref{mainvekua}) by
\begin{equation}\label{solspacevekua}
	\operatorname{V}_f(\Omega; \mathbb{B}):= \left\{ W\in C^1(\Omega; \mathbb{B})\, |\, W \mbox{ is solution of (\ref{mainvekua})}\right\}. 
\end{equation}	

\begin{theorem}[\cite{CamposBicomplex}, Th.13]\label{theoremvekuaclosed}
	The space $\operatorname{V}_f(\Omega; \mathbb{B})$ is closed in $C(\Omega; \mathbb{B})$.
\end{theorem}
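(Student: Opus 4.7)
The plan is to decouple the bicomplex Vekua equation into the scalar pair obtained from the idempotent decomposition, pass to the limit in the distributional sense, and then recover classical $C^1$ regularity via the Cauchy-Pompeiu representation.

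Let $\{W_n\}\subset\operatorname{V}_f(\Omega;\mathbb{B})$ with $W_n\to W$ in $C(\Omega;\mathbb{B})$, and set $A:=\overline{\boldsymbol{\partial}} f/f$. Using Proposition \ref{properties1}(3) one checks directly that $\overline{W}^{\pm}=W^{\mp}$; combining this with (\ref{CRdecomposition}) and (\ref{producbicomplex}), the Vekua equation $\overline{\boldsymbol{\partial}} W_n=A\overline{W_n}$ decouples into the two scalar complex equations
\[
\frac{\partial W_n^+}{\partial z}=A^+\,W_n^-,\qquad \frac{\partial W_n^-}{\partial z^*}=A^-\,W_n^+.
\]
By Remark \ref{remarkconvergence}, $W_n^{\pm}\to W^{\pm}$ uniformly on compact subsets of $\Omega$. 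Since $f\in C^2(\Omega)$ with $f\neq 0$, we have $A\in C^1(\Omega;\mathbb{B})$, so the right-hand sides converge uniformly on compact subsets. Because $\partial_z$ and $\partial_{z^*}$ are continuous in $\mathcal{D}'(\Omega)$ with respect to $L^1_{\mathrm{loc}}$-convergence, passing to the limit yields
\[
\frac{\partial W^+}{\partial z}=A^+\,W^-,\qquad \frac{\partial W^-}{\partial z^*}=A^-\,W^+\quad\text{in }\mathcal{D}'(\Omega).
\]

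To upgrade these identities to classical ones and to show $W\in C^1(\Omega;\mathbb{B})$, I would fix an arbitrary disk $B\Subset\Omega$ and apply the Cauchy-Pompeiu formula to each $W_n^{\pm}$ on $B$: $W_n^{-}$ is expressed as a holomorphic function (boundary integral of $W_n^{-}$) plus the Cauchy transform of $A^-W_n^+$, and symmetrically (with the conjugate kernel) for $W_n^+$. Uniform convergence on $\overline{B}$ lets me pass to the limit, giving the same representation for $W^{\pm}$. A standard Cauchy-transform bootstrap---first $W^{\pm}\in C^{0,\alpha}_{\mathrm{loc}}$ for any $\alpha<1$ (since the Cauchy transform of an $L^\infty_c$ function has modulus of continuity $t\log(1/t)$), then $W^{\pm}\in C^{1}_{\mathrm{loc}}$ (since the Cauchy transform of a H\"older continuous function is $C^{1,\alpha}$, by Vekua's $T$-operator theory)---shows that $W^{\pm}\in C^{1}(B)$ and that the scalar equations hold classically on $B$. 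Because $B\Subset\Omega$ is arbitrary, reassembling $W=\mathbf{p}^+W^++\mathbf{p}^-W^-$ gives $W\in C^{1}(\Omega;\mathbb{B})$ satisfying (\ref{mainvekua}), hence $W\in\operatorname{V}_f(\Omega;\mathbb{B})$.

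The main obstacle is this regularity gap: uniform convergence in $C(\Omega;\mathbb{B})$ gives no direct control over derivatives, yet the definition of $\operatorname{V}_f$ demands classical $C^1$-regularity, so the distributional limit alone does not suffice. Consequently the nontrivial work is concentrated in the Cauchy-transform bootstrap, while the bicomplex algebra of Section 2 and the distributional passage to the limit are routine.
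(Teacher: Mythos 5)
Your proof is essentially correct, but note first that the paper itself offers no argument for this statement: Theorem \ref{theoremvekuaclosed} is imported directly from \cite{CamposBicomplex}, so the natural in-paper comparison is with the closely analogous proof of Theorem \ref{theorembergmanspacecomplete}. Your route --- the idempotent splitting of the equation into $\partial_z W^+ = A^+W^-$, $\partial_{z^*}W^- = A^-W^+$ (the algebra is right: $\overline{W}^{\pm}=W^{\mp}$, and (\ref{CRdecomposition}) gives exactly these two components), passage to the limit, and a Cauchy--Pompeiu representation with a H\"older bootstrap on disks --- is the classical Vekua-style regularity argument; it in fact yields $W\in C^{1,\alpha}_{\mathrm{loc}}$, more than the $C^1$ required by (\ref{solspacevekua}). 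The paper's own treatment of the $L_2$ analogue does the same job globally with the Theodorescu operator $T_\Omega$ and Weyl's lemma: there one writes $H=W-T_{\Omega}[\alpha_f\overline{W}]$, observes $H$ is $\mathbb{B}$-holomorphic, and deduces closedness from boundedness of $T_\Omega$ and closedness of $\operatorname{Hol}$; your disk-wise Cauchy transform is the uniform-convergence counterpart of that device, while your preliminary distributional limit is actually redundant, since everything you use comes from passing to the limit in the Pompeiu representation itself. Two points should be tightened: (i) the $C^{1,\alpha}$ mapping property of the $T$-operator requires the density $A^{\mp}W^{\pm}$ to be H\"older on the \emph{closed} disk of integration, whereas the first bootstrap step only gives $W^{\pm}\in C^{0,\alpha}_{\mathrm{loc}}(B)$; this is repaired by re-running the representation on a slightly smaller disk $B'\Subset B$, where $A^{\mp}W^{\pm}\in C^{0,\alpha}(\overline{B'})$ (here $A\in C^1$ because $f\in C^2(\Omega)$ and $f\neq 0$, as you note); (ii) it is worth stating explicitly that the limit boundary integrals are holomorphic (respectively anti-holomorphic), so that the entire regularity burden falls on the area terms and differentiating the representation returns the equations classically. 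With these routine adjustments the argument is a complete, self-contained proof of the theorem.
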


\subsection{The pseudoanalytic Bergman space}

We introduce the pseudoanalytic Bergman space associated to the main Vekua equation (\ref{mainvekua}) as follows
\begin{equation}\label{bergmanpseudo}
	\mathcal{A}_f^2(\Omega; \mathbb{B}):= \left\{W\in \operatorname{V}_f(\Omega; \mathbb{B}) \, |\, W\in L_2(\Omega; \mathbb{B}) \right\}.
\end{equation}

When $f\equiv 1$ we obtain the analytic Bergman space $\mathcal{A}_2(\Omega; \mathbb{B})$. The following proposition generalized the proof given in \cite{delgadoleblond} for the complex pseudoanalytic Bergman space, that is, with $f$ real-valued (see also \cite{camposbergman} for the proof in the case of the real-valued Bergman pseudoanalytic space).

\begin{theorem}\label{theorembergmanspacecomplete}
	Let $\Omega\subset \mathbb{C}$ be a bounded domain and $f\in C^1(\overline{\Omega})$ that does not vanish in $\Omega$. Then $\mathcal{A}_f^2(\Omega; \mathbb{B})$ is a complex Hilbert space.
\end{theorem}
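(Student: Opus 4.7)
The strategy is to exhibit $\mathcal{A}_f^2(\Omega;\mathbb{B})$ as a closed complex linear subspace of the already Hilbert space $L_2(\Omega;\mathbb{B})$; as such, the restriction of (\ref{L2bicomplexnorm}) makes it a complex Hilbert space. Complex linearity of $V_f(\Omega;\mathbb{B})$ is immediate: since $\mathbb{C}$ commutes with the $\mathbf{j}$-conjugation (both sides of $\overline{aW}=a\overline{W}$ equal $a\operatorname{Sc}W-\mathbf{j}\,a\operatorname{Vec}W$ for $a\in\mathbb{C}$), the operator $W\mapsto\overline{\boldsymbol{\partial}}W-(\overline{\boldsymbol{\partial}}f/f)\overline{W}$ is $\mathbb{C}$-linear.

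The heart of the argument is the local estimate
\begin{equation*}
\sup_{z\in K}|U(z)|_{\mathbb{B}}\leqslant C_K\|U\|_{L_2(\Omega;\mathbb{B})},\qquad \forall\, U\in \mathcal{A}_f^2(\Omega;\mathbb{B}),
\end{equation*}
for each compact $K\subset\Omega$. To prove it I would fix $z_0\in K$, set $\rho=\tfrac{1}{2}\operatorname{dist}(K,\partial\Omega)$, and pick a smooth cutoff $\chi$ with support in $B_\rho^{\mathbb{C}}(z_0)\subset\Omega$ satisfying $\chi\equiv 1$ on $B_{\rho/2}^{\mathbb{C}}(z_0)$. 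A bicomplex Cauchy-Pompeiu formula (obtained by applying the classical complex formula to the components $U^\pm$ and recombining with the idempotents $\mathbf{p}^\pm$ via (\ref{CRdecomposition}) and the inverse (\ref{inversebicomplex}) of $\widehat{\zeta}-\widehat{z_0}$) then yields
\begin{equation*}
U(z_0)=-\frac{1}{\pi}\iint_{B_\rho^{\mathbb{C}}(z_0)}\frac{(\overline{\boldsymbol{\partial}}\chi)\,U+\chi\,(\overline{\boldsymbol{\partial}}f/f)\,\overline{U}}{\widehat{\zeta}-\widehat{z_0}}\,dA_\zeta,
\end{equation*}
after invoking the Vekua equation to replace $\overline{\boldsymbol{\partial}}U$. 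The coefficients $\overline{\boldsymbol{\partial}}\chi$ and $\chi(\overline{\boldsymbol{\partial}}f/f)$ are bounded on $\overline{B_\rho^{\mathbb{C}}(z_0)}$ because $f\in C^1(\overline{\Omega})$ does not vanish there, and the modulus of $(\widehat{\zeta}-\widehat{z_0})^{-1}$ behaves like $|\zeta-z_0|^{-1}$, hence lies in $L_2(B_\rho^{\mathbb{C}}(z_0);\mathbb{B})$. The Cauchy-Schwarz inequality combined with (\ref{norminequality}) supplies the pointwise bound, and a compactness argument across $z_0\in K$ yields a uniform constant $C_K$.

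With this estimate in hand, any $L_2$-Cauchy sequence $\{W_n\}\subset\mathcal{A}_f^2(\Omega;\mathbb{B})$ is automatically uniformly Cauchy on each compact $K\subset\Omega$; hence it converges in $C(\Omega;\mathbb{B})$ to some $\widetilde{W}$, and Theorem~\ref{theoremvekuaclosed} places $\widetilde{W}\in V_f(\Omega;\mathbb{B})$. If moreover $W_n\to W$ in $L_2(\Omega;\mathbb{B})$, passing to an a.e.-convergent subsequence gives $\widetilde{W}=W$ almost everywhere, so $W\in \mathcal{A}_f^2(\Omega;\mathbb{B})$; this is the desired closedness. The main technical obstacle is the careful justification of the bicomplex Cauchy-Pompeiu representation with a cutoff and the bookkeeping of the $\overline{U}$-coupling produced by the Vekua equation; once that is secured, the remainder is a bicomplex adaptation of the closedness argument for the complex-valued Bergman pseudoanalytic space developed in \cite{delgadoleblond,camposbergman}.
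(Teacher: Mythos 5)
Your overall architecture (closed subspace of $L_2(\Omega;\mathbb{B})$ via a local sup-estimate plus Theorem \ref{theoremvekuaclosed}) is reasonable, but the key step fails as written. In two real dimensions the Cauchy kernel is \emph{not} square integrable near its singularity: since $|(\widehat{\zeta}-\widehat{z_0})^{-1}|_{\mathbb{B}}=|\zeta-z_0|^{-1}$, one has
\begin{equation*}
\iint_{B_\rho^{\mathbb{C}}(z_0)}\frac{dA_\zeta}{|\zeta-z_0|^{2}}=2\pi\int_0^\rho\frac{dr}{r}=\infty ,
\end{equation*}
so the claim that $(\widehat{\zeta}-\widehat{z_0})^{-1}$ lies in $L_2(B_\rho^{\mathbb{C}}(z_0);\mathbb{B})$ is false, and the Cauchy--Schwarz step that is supposed to produce $\sup_K|U|\leqslant C_K\|U\|_{L_2}$ collapses. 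The term containing $\overline{\boldsymbol{\partial}}\chi$ is harmless (it is supported in the annulus where $|\zeta-z_0|\geqslant\rho/2$), but the term $\chi(\overline{\boldsymbol{\partial}}f/f)\overline{U}$ sits on top of the singularity, and the Cauchy/Theodorescu transform does not map $L_2$ into $L_\infty$ (it maps $L_2$ into $W^{1,2}$, hence into every $L_q$ with $q<\infty$, but not further). To rescue your route you would need either a bootstrap (first conclude $U\in L_q$ locally for some $q>2$ from the representation $U=H+T[\chi\alpha_f\overline{U}]$ with $H$ $\mathbb{B}$-holomorphic, then rerun the estimate, since $T$ maps $L_q$, $q>2$, into a H\"older class) or interior elliptic estimates for the Schr\"odinger equations satisfied by $\operatorname{Sc}U$ and $\operatorname{Vec}U$ — which is exactly what the paper does later in Proposition \ref{propbergmankernelpseudo}, and only under the stronger hypotheses $f\in C^1(\overline{\Omega})\cap C^2(\Omega)\cap W^{2,\infty}(\Omega)$, not the bare $C^1(\overline{\Omega})$ assumed in this theorem.

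Note also that the paper's own proof avoids any pointwise bound on pseudoanalytic functions: it uses the Bicomplex Theodorescu operator $T_\Omega$ and Weyl's lemma to show that $W\in\mathcal{A}_f^2(\Omega;\mathbb{B})$ if and only if $H=W-T_\Omega[\alpha_f\overline{W}]\in\mathcal{A}^2(\Omega;\mathbb{B})$, with $\alpha_f=\overline{\boldsymbol{\partial}}f/f$; since $T_\Omega$ is bounded on $L_2(\Omega;\mathbb{B})$ and the analytic Bergman space is closed, closedness of $\mathcal{A}_f^2(\Omega;\mathbb{B})$ follows by continuity of $W\mapsto H$. If you want a proof under the stated $C^1(\overline{\Omega})$ hypothesis, that reduction is the cleaner path; your estimate-based argument, once repaired by the bootstrap, proves the stronger local bound but at the cost of more machinery than the statement requires.
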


\begin{proof}
	Consider the Bicomplex Theodorescu operator (introduced in \cite{CamposBicomplex}) given by
	\[
	T_{\Omega}\Phi(z):=P^+B_{\Omega}\Phi^{+}(z)+P^{-}A_{\Omega}\Phi^{+}(z),\quad \Phi\in L_2(\Omega; \mathbb{B}),
	\] where $A_{\Omega}g(z):=\displaystyle \frac{1}{\pi}\iint_{\Omega}\frac{g(\zeta)}{\zeta-z}dA_{\zeta}$ and $B_{\Omega}g(z):=\displaystyle \frac{1}{\pi}\iint_{\Omega}\frac{g(\zeta)}{\zeta^*-z^*}dA_{\zeta}$ for $g\in L_2(\Omega)$. It is known that $A_{\Omega}, B_{\Omega}\in \mathcal{B}(L_2(\Omega), W^{1,2}(\Omega))$ (see, e.g., \cite[Prop. 5.2.1]{leblont}). Thus, $T_\Omega\in \mathcal{B}\left(L_2(\Omega; \mathbb{B}), W^{1,2}(\Omega; \mathbb{B})\right)$. Furthermore, it is known that $\frac{\partial}{\partial z}A_{\Omega}g=g$ and $\frac{\partial}{\partial z^*}B_{\Omega}g=g$ for $g\in L_2(\Omega)$ (\cite[Prop. 5.2.1]{leblont}). Hence
	\[
	\overline{\boldsymbol{\partial}}T_{\Omega}W=P^+\frac{\partial}{\partial z^*}B_{\Omega}W^+ +P^{-}\frac{\partial}{\partial z}A_{\Omega}W^{-}=W \quad \mbox{ for   }\; W\in L_2(\Omega; \mathbb{B}).
	\]  By the Weyl Lemma \cite[Ch. 18, Cor. 4.11]{conway2}, if $W\in L_2(\Omega;\mathbb{B})$ satisfies $\overline{\boldsymbol{\partial}}W=0$ in the weak sense (that is, $\frac{\partial W^+}{\partial z}=\frac{\partial W^{-}}{\partial z^*}=0$ in the weak sense), then $W\in \operatorname{Hol}(\Omega; \mathbb{B})$.  Take $W\in L_2(\Omega; \mathbb{B})$ and consider
	\[
	H=W-T_{\Omega}[\alpha_f\overline{W}], \quad \mbox{where  } \alpha_f:=\frac{\overline{\boldsymbol{\partial}}f}{f}.
	\]
	Since $f\in C^1(\overline{\Omega})$, $\alpha_f\overline{W}\in L_2(\Omega; \mathbb{B})$ and hence $H\in L_2(\Omega; \mathbb{B})$. Suppose that $W\in \mathcal{A}_f^2(\Omega; \mathbb{B})$. Then
	\begin{equation}\label{auxiliar1}
		\overline{\boldsymbol{\partial}}H=\overline{\boldsymbol{\partial}}W-\alpha_f\overline{W}=0.
	\end{equation}
	Thus, $H\in \mathcal{A}_2(\Omega; \mathbb{B})$. Reciprocally, if $H\in \mathcal{A}^2(\Omega; \mathbb{B})$, Eq. (\ref{auxiliar1}) implies that $W\in \mathcal{A}_f^2(\Omega; \mathbb{B})$. Now consider a sequence $\{W_n\}\subset \mathcal{A}_f^2(\Omega; \mathbb{B})$ such that $W_n\rightarrow W$ in $L_2(\Omega; \mathbb{B})$ for some $W\in L_2(\Omega; \mathbb{B})$. Then $H_n=W_n-T_{\Omega}[\alpha_f\overline{W}_n]\in \mathcal{A}^2(\Omega; \mathbb{B})$. Since $T_{\Omega}\in \mathcal{B}(L_2(\Omega; \mathbb{B}))$, then $H_n\rightarrow H=W-T_{\Omega}[\alpha_f\overline{W}]$ in $L_2(\Omega; \mathbb{B})$. Hence $H\in \mathcal{A}^2(\Omega; \mathbb{B})$ which implies that $W\in \mathcal{A}_f^2(\Omega; \mathbb{B})$.
\end{proof}
\begin{remark}\label{remarkseparablebergman}
	Since $\mathcal{A}_f^2(\Omega; \mathbb{B})$ is a subspace of the separable Hilbert space $L_2(\Omega; \mathbb{B})$, then $\mathcal{A}_f^2(\Omega; \mathbb{B})$ is a separable Hilbert space \cite[Prop. 3.25]{brezis}.
\end{remark}	
\begin{proposition}\label{propbergmankernelpseudo}
	Suppose that $f\in C^1(\overline{\Omega})\cap C^2(\Omega)\cap  W^{2,\infty}(\Omega)$. For any $K\subset \Omega$ there exists a constant $C_K>0$ such that
	\begin{equation}\label{Bergmanpseudoconstant}
		\max_{z\in K}|W(z)|_{\mathbb{B}}\leqslant C_K\|W\|_{L_2(\Omega;\mathbb{B})}, \quad \forall W\in \mathcal{A}_f^2(\Omega;\mathbb{B}).
	\end{equation}
\end{proposition}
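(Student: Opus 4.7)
The strategy is to reduce the statement to the classical interior $L^\infty$--$L^2$ estimate for scalar second-order linear elliptic equations, applied separately to $u=\operatorname{Sc}W$ and $v=\operatorname{Vec}W$, and then recombine via the identity $|W|_{\mathbb{B}}^2=|\operatorname{Sc}W|^2+|\operatorname{Vec}W|^2$ from (\ref{normsecondform}).

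First, I would write $W=u+\mathbf{j}v$ and invoke Theorem \ref{theoremvekuaconjugate}: $u$ satisfies $\bigtriangleup u - q_f u = 0$ and $v$ satisfies $\bigtriangleup v - q_{\frac{1}{f}} v = 0$ in $\Omega$. Fix a compact $K\subset \Omega$, set $\delta:=\frac{1}{2}\operatorname{dist}(K,\partial\Omega)>0$ and let $K_\delta:=\{z\in\Omega:\operatorname{dist}(z,K)\leqslant \delta\}$, which is a compact subset of $\Omega$. Since $f\in C^1(\overline{\Omega})$ does not vanish in $\Omega$, the continuous function $|f|$ attains a positive minimum $c_K>0$ on $K_\delta$; combined with the hypothesis $f\in W^{2,\infty}(\Omega)$, the formulas $q_f=\frac{\bigtriangleup f}{f}$ and $q_{\frac{1}{f}}=\frac{2(f_x^2+f_y^2)}{f^2}-q_f$ yield a constant $M_K>0$ with
\[
\|q_f\|_{L^\infty(K_\delta)},\ \|q_{\frac{1}{f}}\|_{L^\infty(K_\delta)}\leqslant M_K.
\]

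Second, for each $z_0\in K$ the ball $B_\delta(z_0)$ lies in $K_\delta$. For either complex-valued solution $\varphi\in\{u,v\}$ of $\bigtriangleup\varphi-Q\varphi=0$ with $Q\in L^\infty(B_\delta(z_0))$, a direct computation gives
\[
\bigtriangleup|\varphi|^2 = 2|\nabla\varphi|^2 + 2\operatorname{Re}(\overline{\varphi}\,\bigtriangleup\varphi) \geqslant -2\|Q\|_{L^\infty}|\varphi|^2,
\]
so $|\varphi|^2$ is a non-negative weak subsolution of $\bigtriangleup w+2M_K w=0$ on $B_\delta(z_0)$. The classical mean-value/Moser estimate for subsolutions of uniformly elliptic equations (in the form, e.g., of Gilbarg--Trudinger, Theorem 8.17) then furnishes a constant $C=C(M_K,\delta)>0$, independent of $z_0\in K$, such that
\[
|\varphi(z_0)|^2 \leqslant C\iint_{B_\delta(z_0)}|\varphi(z)|^2\,dA_z.
\]
Adding the two estimates for $u$ and $v$ gives $|W(z_0)|_{\mathbb{B}}^2\leqslant C\iint_{B_\delta(z_0)}|W|_{\mathbb{B}}^2\,dA\leqslant C\|W\|_{L_2(\Omega;\mathbb{B})}^2$, and taking the supremum over $z_0\in K$ with $C_K:=\sqrt{C}$ yields (\ref{Bergmanpseudoconstant}).

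The main obstacle is accommodating complex-valued solutions (and complex-valued potentials $q_f,q_{\frac{1}{f}}$) within a real-scalar elliptic framework; the subsolution inequality for $|\varphi|^2$ above is the clean technical device that bypasses this difficulty without recourse to real/imaginary decomposition of the system. An alternative line of attack would proceed through the decomposition $W=H+T_{\Omega}[\alpha_f\overline{W}]$ with $H\in\mathcal{A}^2(\Omega;\mathbb{B})$ from the proof of Theorem \ref{theorembergmanspacecomplete}, combining the standard Bergman pointwise bound on $H$ with interior Sobolev regularity of $T_\Omega$; the subsolution approach sketched above is, however, shorter and makes the dependence of $C_K$ on $M_K$ and $\operatorname{dist}(K,\partial\Omega)$ explicit.
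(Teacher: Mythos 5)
Your proof is correct, and its skeleton coincides with the paper's: decompose $W=u+\mathbf{j}v$, invoke Theorem \ref{theoremvekuaconjugate} so that $u$ and $v$ solve the scalar Schr\"odinger equations with potentials $q_f$ and $q_{\frac{1}{f}}$, apply an interior $\max$--$L_2$ estimate to each, and recombine via $|W|_{\mathbb{B}}^2=|u|^2+|v|^2$. The difference lies in how the interior estimate is obtained: the paper simply cites Friedrichs and \cite[Sec. 2]{mine2} for the bound $\max_K|\varphi|\leqslant C\|\varphi\|_{L_2(\Omega)}$ for solutions with $L_\infty$ potential, whereas you prove it from scratch by observing that $|\varphi|^2$ is a nonnegative subsolution of $\bigtriangleup w+2M_Kw=0$ and invoking Moser-type local boundedness; this makes the argument self-contained, handles the complex-valued potentials cleanly, and makes the dependence of $C_K$ on $\|q\|_{L_\infty}$ and $\operatorname{dist}(K,\partial\Omega)$ explicit. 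Your localization of the potential bounds to the compact neighborhood $K_\delta$ is also slightly more careful than the paper's global assertion $q_f,q_{\frac{1}{f}}\in L_\infty(\Omega)$, which tacitly requires $|f|$ to be bounded away from zero up to $\partial\Omega$. One small citation point: Gilbarg--Trudinger Theorem 8.17 is stated for $L^p$ norms with $p>1$, while your application to $w=|\varphi|^2$ needs the $L^1$-norm ($p=1$) version; that version is standard (local boundedness of subsolutions holds for every $p>0$, e.g. Han--Lin, Theorem 4.1, or by the usual interpolation/iteration over nested balls), or you can sidestep it by applying the $p=2$ estimate to the subsolution $|\varphi|$ itself (Kato's inequality gives $\bigtriangleup|\varphi|\geqslant -M_K|\varphi|$ weakly), so this is a matter of precision in the reference rather than a gap.
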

\begin{proof}
		Let $W\in \mathcal{A}_f^2(\Omega; \mathbb{B})$ and $K\subset \Omega$ be compact. Set $u=\operatorname{Sc}W$ and $v=\operatorname{Vec}W$. Note that $u,v\in L_2(\Omega)$. By Theorem \ref{theoremvekuaconjugate}, $u$ is a solution of (\ref{schrodingereq}) and $v$ a solution of (\ref{darbouxschrodinger}). Since $f\in W^{2,\infty}(\Omega)$, $q_f,q_{\frac{1}{f}}\in L_{\infty}(\Omega)$, and according to \cite{friedrichs} and \cite[Sec. 2]{mine2}, there exist constants $C_1,C_2>0$ such that 
\[
	\max_{z\in K}|u(z)|\leqslant C_1 \|u\|_{L_2(\Omega)}, \quad \mbox{and }\quad \max_{z\in K}|v(z)|\leqslant C_1 \|v\|_{L_2(\Omega)}.
	\]
	Taking $C_3=\max\{C_1,C_2\}$ we obtain that for any $z\in K$
	\[
	|W(z)|_{\mathbb{B}}\leqslant |u(z)|+|v(z)|\leqslant C_3(\|u\|_{L_2(\Omega)}+\|v(z)\|_{L_2(\Omega)})\leqslant 2C_3 \|W\|_{L_2(\Omega; \mathbb{B})}.
	\]
	Thus, $\displaystyle \max_{z\in K}|W(z)|_{\mathbb{B}}\leqslant C_K\|W\|_{L_2(\Omega; \mathbb{B})}$, with $C_K=2C_3$. 
\end{proof}	
	
	Under the conditions of Proposition \ref{propbergmankernelpseudo}, for any $z\in \Omega$, the functionals $\mathcal{A}_f^2(\Omega; \mathbb{B})\ni W \mapsto \operatorname{Sc}W(z), \operatorname{Vec}W(z)\in \mathbb{C}$ are bounded. Then by the Riesz representation theorem there exist kernels $K_z, L_z\in \mathcal{A}_f^2(\Omega; \mathbb{B})$ such that 
	\[
	\operatorname{Sc}W(z)=\iint_{\Omega}\langle W(\zeta), K_z(\zeta)\rangle_{\mathbb{B}}dA_{\zeta}, \quad \operatorname{Vec}W(z)=\iint_{\Omega}\langle W(\zeta), L_z(\zeta)\rangle_{\mathbb{B}}dA_{\zeta}.
	\]
	These results are similar to those obtained in \cite{camposbergman} for the complex-valued Bergman pseudoanalytic space.

\begin{remark}\label{remarkbergmankernel2}	
	The kernels $K_z(\zeta)$ and $L_z(\zeta)$ satisfies the relations
	\begin{equation}\label{relationsbergmankernel}
		\operatorname{Sc}K_z(\zeta)=(\operatorname{Sc}K_{\zeta}(z))^{*}, \quad \operatorname{Vec}L_z(\zeta)=(\operatorname{Vec}L_{\zeta}(z))^{*}, \quad \operatorname{Sc}L_z(\zeta)= (\operatorname{Vec}K_{\zeta}(z))^{*}
	\end{equation}
Indeed, since $K_z\in \mathcal{A}_f^2(\Omega; \mathbb{B})$ we have
\[
\operatorname{Sc}K_z(\zeta)= \langle K_z, K_{\zeta}\rangle_{L_2(\Omega;\mathbb{B})}= \langle K_{\zeta}, K_z\rangle_{L_2(\Omega;\mathbb{B})}^{*}=\left(\operatorname{Sc}K_{\zeta}\right)^*(z).
\]
The other two equalities are proved analogously.
\end{remark}
Using (\ref{relationsbergmankernel}), for any $W\in \mathcal{A}_f^2(\Omega; \mathbb{B})$ we obtain  the following relation
\begin{align*}
	W(z)& = \operatorname{Sc}W(z)+\mathbf{j}\operatorname{Vec}W(z)\\
	  & = \iint\limits_{\Omega}\left( \operatorname{Sc}W(\zeta)(\operatorname{Sc}K_z(\zeta))^{*}+\operatorname{Vec}W(\zeta)(\operatorname{Vec}K_z(\zeta))^{*}\right)dA_{\zeta} \\
	  &\quad + \mathbf{j}\iint\limits_{\Omega}\left( \operatorname{Sc}W(\zeta)(\operatorname{Sc}L_z(\zeta))^{*}+\operatorname{Vec}W(\zeta)(\operatorname{Vec}L_z(\zeta))^{*}\right)dA_{\zeta}\\
      & = \iint\limits_{\Omega}\left( \operatorname{Sc}W(\zeta)\operatorname{Sc}K_{\zeta}(z)+\operatorname{Vec}W(\zeta)\operatorname{Sc}L_{\zeta}(z)\right)dA_{\zeta} \\
     &\quad + \mathbf{j}\iint\limits_{\Omega}\left( \operatorname{Sc}W(\zeta)\operatorname{Vec}K_{\zeta}(z)+\operatorname{Vec}W(\zeta)\operatorname{Vec}L_{\zeta}(z)\right)dA_{\zeta},
\end{align*}
from where we obtain the relation
\begin{equation}\label{bergmankernel1}
	W(z)=\iint\limits_{\Omega} \left( \operatorname{Sc}W(\zeta)K_{\zeta}(z)+\operatorname{Vec}W(\zeta)L_{\zeta}(z)\right)dA_{\zeta}.
\end{equation}
For all $z,\zeta\in \Omega$, define $K(z,\zeta)=K_{\zeta}(z)$ and $L(z,\zeta)=L_{\zeta}(z)$. Following \cite{camposbergman}, we introduce the next definition of the Bergman kernel.

\begin{definition}
	The Bergman kernel of the space $\mathcal{A}_f^2(\Omega; \mathbb{B})$ with coefficient $A\in \mathbb{B}$ is defined by
	\begin{equation}\label{Bergmankernelvekua}
		\mathscr{K}_{\Omega}^f(A;z,\zeta):= \operatorname{Sc}(A) K(z,\zeta)+\operatorname{Vec}(A)L(z,\zeta), \quad z,\zeta\in \Omega. 
	\end{equation}
\end{definition}
	Then we have the reproducing property
\begin{equation}\label{reproducingproperty}
	W(z)=\iint\limits_{\Omega}\mathscr{K}_{\Omega}^f(W(\zeta);z,\zeta)dA_{\zeta}.
\end{equation}
This definition for the Bergman kernel of the Vekua equation was introduced in \cite{camposbergman} for the case when $f$ is real valued (and $\mathcal{A}_f^2(\Omega; \mathbb{C})$ is a {\it real} Hilbert space).

\begin{remark}\label{remarkbergmankernelexpasion}
	Note that $\mathscr{K}_{\Omega}^f(A; \cdot, \zeta)\in \mathcal{A}_f^2(\Omega; \mathbb{B})$ for all $A\in \mathbb{B}$, $\zeta \in \Omega$. Since $\mathcal{A}_f^2(\Omega; \mathbb{B})$ is a separable Hilbert space, it admits an orthonormal basis of the form $\{\Phi_n\}_{n=0}^{\infty}$ \cite[Th. 5.11]{brezis}. Then
	\begin{align*}
		\mathscr{K}_{\Omega}^f(A; z, \zeta)=\sum_{n=0}^{\infty}\left\langle \mathscr{K}_{\Omega}^f(A; \cdot, \zeta), \Phi_n \right\rangle_{L_2(\Omega;\mathbb{B})}\Phi_n(z),
	\end{align*}
and
\begin{align*}
	\left\langle \mathscr{K}_{\Omega}^f(A; \cdot, \zeta), \Phi_n \right\rangle_{L_2(\Omega;\mathbb{B})} & = \left\langle \operatorname{Sc}(A)K(\cdot, \zeta)+\operatorname{Vec}(A)L(\cdot, \zeta), \Phi_n(\cdot)\right\rangle_{L_2(\Omega; \mathbb{B})} \\
	& =\operatorname{Sc}(A) \left\langle K_{\zeta}(\cdot), \Phi_n(z)\right\rangle_{L_2(\Omega; \mathbb{B})}+\operatorname{Vec}(A)\left\langle L_{\zeta}(\cdot), \Phi_n(\cdot)\right\rangle_{L_2(\Omega; \mathbb{B})}\\
	&= \operatorname{Sc}(A)\left(\operatorname{Sc}\Phi_n(\zeta)\right)^{*}+\operatorname{Vec}(A)\left(\operatorname{Vec}\Phi_n(\zeta)\right)^{*}=\langle  A, \Phi_n(\zeta)\rangle_{\mathbb{B}}.
\end{align*}
Thus,
\begin{equation}\label{bergmankernelexpansion}
	\mathscr{K}_{\Omega}^f(A; z, \zeta)=\sum_{n=0}^{\infty}\langle  A, \Phi_n(\zeta)\rangle_{\mathbb{B}}\Phi_n(z).
\end{equation}
Since $\Omega$ is bounded, the constant functions belong to $\mathcal{A}_f^2(\Omega; \mathbb{B})$, so the coefficients of (\ref{bergmankernelexpansion}) lie in $\ell_2(\mathbb{N}_0)$ and the series (\ref{bergmankernelexpansion}) converges in the $L_2(\Omega;\mathbb{B})$-norm. By Proposition \ref{propbergmankernelpseudo}, series (\ref{bergmankernelexpansion}) converges in the variable $z$ uniformly on compact subsets of $\Omega$.
\end{remark}
\begin{remark}
	Since $\mathcal{A}_f^2(\Omega;\mathbb{B})$ is a closed subspace of $L_2(\Omega;\mathbb{B})$, we have the orthogonal decomposition $L_2(\Omega;\mathbb{B})=\mathcal{A}_f^2(\Omega;\mathbb{B})\oplus\left(\mathcal{A}_f^2(\Omega;\mathbb{B})\right)^{\perp}$. Let $\mathbf{P}_{\Omega}: L_2(\Omega; \mathbb{B})\rightarrow \mathcal{A}_f^2(\Omega;\mathbb{B})$ the corresponding orthogonal projection, that we call the Bergman projection. Similar to the complex-valued case \cite[Prop 1.4]{camposbergman}, we have that the Bergman projection can be written as
	\begin{equation}
		\mathbf{P}_{\Omega}\Phi(z)= \iint_{\Omega}\mathscr{K}_{\Omega}^f(\Psi(\zeta),z,\zeta)dA_{\zeta}, \qquad \forall \Psi \in L_2(\Omega;\mathbb{B}).
	\end{equation}
Indeed, let $W=\mathbf{P}_{\Omega}\Psi(z)$ for $\Psi\in L_2(\Omega; \mathbb{B})$. Hence $W\in \mathcal{A}_f^2(\Omega; \mathbb{B})$ and
we can write $W$ using (\ref{reproducingproperty}). Now, in the construction of (\ref{reproducingproperty}), we obtain the equality
\begin{equation*}
	\iint_{\Omega}\mathscr{K}_{\Omega}(W(\zeta),z,\zeta)= \langle W, K_{z}\rangle_{L_2(\Omega; \mathbb{B})}+\mathbf{j}\langle W, L_{z}\rangle_{L_2(\Omega; \mathbb{B})}.
\end{equation*}
Notice that the right-hand side of this equality is well-defined even for $L_2$-functions. Since the orthogonal projection is a self-adjoint operator \cite[Prop. 3.3]{conwayfunctional}, we have
\begin{align*}
	\mathbf{P}_{\Omega}\Psi(z) & = \langle \mathbf{P}_{\Omega}\Psi, K_{z}\rangle_{L_2(\Omega; \mathbb{B})}+\mathbf{j}\langle \mathbf{P}_{\Omega}\Psi, L_{z}\rangle_{L_2(\Omega; \mathbb{B})} =  \langle \Psi, \mathbf{P}_{\Omega}K_{z}\rangle_{L_2(\Omega; \mathbb{B})}+\mathbf{j}\langle \Psi, \mathbf{P}_{\Omega}L_{z}\rangle_{L_2(\Omega; \mathbb{B})}\\
	 &=\langle \Psi, K_{z}\rangle_{L_2(\Omega; \mathbb{B})}+\mathbf{j}\langle \Psi, L_{z}\rangle_{L_2(\Omega; \mathbb{B})}= \iint_{\Omega}\mathscr{K}_{\Omega}^f(\Psi(\zeta),z,\zeta)dA_{\zeta}.
\end{align*}
\end{remark}
\begin{remark}
	In the case $f\equiv 1$ we obtain the Bicomplex analytic Bergman space, and $\mathbf{j}W\in \mathcal{A}^2(\Omega;\mathbb{B})$ if $W\in \mathcal{A}^2(\Omega;\mathbb{B})$. It is not difficult to see that $\langle \mathbf{j}W, V\rangle_{L_2(\Omega;\mathbb{B})}=-\langle W,\mathbf{j} V\rangle_{L_2(\Omega;\mathbb{B})}$ for $W,V\in L_2(\Omega;\mathbb{B})$, and that $\operatorname{Sc}(\mathbf{j}W)=-\operatorname{Vec}W$,$\operatorname{Vec}(\mathbf{j}W)=\operatorname{Sc}W$. Hence
	\begin{align*}
		\operatorname{Sc}L(z,\zeta)& = \operatorname{Sc}L_{\zeta}(z)= \operatorname{Vec}\left(\mathbf{j}L_{\zeta}(z)\right) = \langle \mathbf{j}L_{\zeta}, L_z\rangle_{L_2(\Omega; \mathbb{B})}=-\langle L_{\zeta},\mathbf{j} L_z\rangle_{L_2(\Omega; \mathbb{B})}\\
		& = \left(\operatorname{Vec}(\mathbf{j}L_z(\zeta))\right)^*=
		-\left(\operatorname{Sc}L(\zeta,z)\right)^{*}.
	\end{align*}
In a similar way $\operatorname{Sc}K(z,\zeta)=\left(\operatorname{Vec}L(\zeta,z)\right)^{*}$. Using these equalities together with (\ref{relationsbergmankernel}) we obtain
\begin{align*}
	L(z,\zeta) & = \operatorname{Sc}L(z,\zeta)+\mathbf{j}\operatorname{Vec}L(z,\zeta)=-\left(\operatorname{Sc}L(\zeta,z)\right)^{*}+\mathbf{j}\left(\operatorname{Sc}K(\zeta,z)\right)^{*}\\
	&= -\operatorname{Vec}K(z,\zeta)+\mathbf{j}\operatorname{Sc}K(z,\zeta)= \mathbf{j}K(z,\zeta).
\end{align*}
Substituting this equality in (\ref{Bergmankernelvekua}) and (\ref{reproducingproperty}) we obtain that $K(z,\zeta)$ is the analytic Bergman kernel obtained in Subsection \ref{secbicomplexanalytic}. 
\end{remark}

\section{Transmutation operators}
\subsection{The Vekua equation in polar coordinates}

From now on, we assume that $\Omega$ is a bounded star-shaped domain with respect to $z=0$, that is, $0\in \Omega$ and for any $z\in \Omega$, the segment $[0,z]:=\{tz\, |\, 0\leqslant t\leqslant 1\}$ belongs to $\Omega$. In this case, we denote $\varrho_{\Omega}:=\displaystyle  \sup_{z\in \Omega}|z|$. Suppose that $q\in C^1[0, \varrho_{\Omega}]$ is scalar complex-valued and consider the radial Schr\"odinger equation
\begin{equation}\label{radialschrodinger}
	\left(\bigtriangleup-q(|z|)\right)u(z)=0, \quad z\in \Omega.
\end{equation}
In this case, the potential only depends on the radial component $r=|z|$. In polar coordinates $z=re^{i\theta}$, Eq. (\ref{radialschrodinger}) has the form
\begin{equation}
	\left(\frac{\partial^2}{\partial r^2}+\frac{1}{r}\frac{\partial}{\partial r}+\frac{1}{r^2}\frac{\partial^2}{\partial \theta^2}-q(r)\right)u(r,\theta)=0, \quad (r,\theta)\in (0,\varrho_{\Omega})\times [0, 2\pi].
\end{equation}
Suppose that there exists a non-vanishing scalar complex-valued solution of (\ref{radialschrodinger}) $\tilde{f}\in C^2(\overline{\Omega})$ that only depends on the radial component and take $f(r):=\widetilde{f}(|z|)$. A direct computation shows that if $f\in C^2[0, \varrho_{\Omega}]$ is a solution of the Bessel equation
\begin{equation}
	f''(r)+\frac{1}{r}f'(r)-q(r)f(r)=0, \quad 0<r<\varrho_{\Omega}.
\end{equation} 
From now on, we identify $f$ with $\widetilde{f}$.
\begin{remark}\label{CRinpolar}
 The Bicomplex Cauchy-Riemann operators can be written in the form
\begin{equation}\label{CRinpolarcoordinates}
	\overline{\boldsymbol{\partial}}=\frac{e^{\mathbf{j}\theta}}{2}\left(\frac{\partial}{\partial r}+\frac{\mathbf{j}}{r}\frac{\partial}{\partial \theta}\right), \quad \boldsymbol{\partial}=\frac{e^{-\mathbf{j}\theta}}{2}\left(\frac{\partial}{\partial r}-\frac{\mathbf{j}}{r}\frac{\partial}{\partial \theta}\right)
\end{equation}
\end{remark}

In consequence we obtain
\begin{equation}\label{coeffradialmain}
	\frac{\overline{\boldsymbol{\partial}}f(z)}{f(z)}= \frac{e^{\mathbf{j}\theta}}{2}\frac{f'(r)}{f(r)}.
\end{equation}
Thus, the main Vekua equation associated to $f$ can be written as
\begin{equation}\label{radialVekuaEq1}
	\frac{e^{\mathbf{j}\theta}}{2}\left(\frac{\partial}{\partial r}W+\frac{\mathbf{j}}{r}\frac{\partial}{\partial \theta}W-\frac{f'(r)}{f(r)}\overline{W}\right)=0.
\end{equation}
that is reduced to
\begin{equation}\label{radialVekuaeq}
	\frac{\partial}{\partial r}W+\frac{\mathbf{j}}{r}\frac{\partial}{\partial \theta}W-\frac{f'(r)}{f(r)}\overline{W}=0, \quad (r,\theta)\in (0,\varrho_{\Omega})\times [0,2\pi].
\end{equation}
Eq. (\ref{radialVekuaeq}) will be called the {\bf radial (main) Vekua equation}. If $W=u+\mathbf{j}v$ with $u=\operatorname{Sc}W,v=\operatorname{Vec}W$, Eq. (\ref{radialVekuaeq}) is equivalent to the system
\begin{eqnarray}
	f\frac{\partial}{\partial r}\left(\frac{u}{f}\right) & = & \frac{1}{r}\frac{\partial v}{\partial \theta} \label{CRsystem1}\\
	\frac{1}{f}\frac{\partial}{\partial r}\left(f v\right) & = & -\frac{1}{r}\frac{\partial v}{\partial \theta}. \label{CRsystem2}
\end{eqnarray}

On the other hand, since $\frac{\partial f}{\partial x}=\cos(\theta)f'(r)$ and $\frac{\partial f}{\partial y}=\sin(\theta)f'(r)$, the Darboux potential is given by
\[
q_{\frac{1}{\tilde{f}}}(z)=2\left(\frac{f'(r)}{f(r)}\right)^2-q(r).
\]
Then  $q_f(r)=\frac{1}{f(r)}\left(f''(r)+\frac{1}{r}f'(r)\right)$ and $q_{\frac{1}{f}}(r)= 2\left(\frac{f'(r)}{f(r)}\right)^2-q_f(r)$. Hence, $u$ and $v$ satisfies system (\ref{CRsystem1}),(\ref{CRsystem2}) iff are solutions of the radial Schr\"odinger equation with potentials $q_f$ and $q_{\frac{1}{f}}$, respectively.

\subsection{Transmutation operators for the radial Schr\"odinger equation}

Suppose that $q_f\in C^1[0,\varrho_{\Omega}]$. Denote by $\mathbf{S}_f:= -\bigtriangleup+q_f(r)$ the radial Schr\"odinger operator with potential $q_f$, and define
\[
\operatorname{Sol}^{\mathbf{S}_f}(\Omega):= \{ u\in C^2(\Omega)\, |\, u \mbox{ is solution of }  \mathbf{S}_fu=0 \; \mbox{ in } \Omega\}.
\]
The following result establishes that any solution $u\in \operatorname{Sol}^{\mathbf{S}_f}(\Omega)$ can be written as the image of a harmonic function $h\in \operatorname{Har}(\Omega)$ under the action of an integral operator.

\begin{theorem}[\cite{gilbertatkinson}]\label{thereomtransformationoperator}
	Let $q_f\in C^1[0,\varrho_{\Omega}]$.
	\begin{itemize} 
	\item[(i)]  There exists a kernel $G^f\in C^{2}([0,\varrho_{\Omega}]\times [0,1])$ satisfying the partial differential equation
	\begin{equation}\label{PDEkernel1}
		r(G_rr(r,t)-q_f(r)G^f(r,t))-G_r^f(r,t)+2(1-t)G_{rt}^f(r,t)=0, \quad (r,t)\in [0,\varrho_{\Omega}]\times [0,1],
	\end{equation}
with the initial conditions
\begin{equation}\label{initialconditionskernelG}
	G^f(r,0)=\int_0^r\tau q(\tau)d\tau \; \forall r\in [0,\varrho_{\Omega}]; \quad G^f(0,t)=0, \; \forall t\in [0,1].
\end{equation}
\item[(ii)] For any solution $u\in  \operatorname{Sol}^{\mathbf{S}_f}(\Omega)$, there exists a unique $h\in \operatorname{Har}(\Omega)$ such that
\begin{equation}\label{transmutation1}
	\mathbf{T}_fu(z)= h(z)+\int_0^1\sigma G^f(r, 1-\sigma^2)h(\sigma^2z)d\sigma.
\end{equation}
Then the operator $\mathbf{T}_f$ transforms $\operatorname{Har}(\Omega)$ onto $\operatorname{Sol}^{\mathbf{S}_f}(\Omega)$.
\end{itemize}
\end{theorem}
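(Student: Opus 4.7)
The plan is to construct the kernel $G^f$ by recasting the Goursat-type problem (\ref{PDEkernel1})--(\ref{initialconditionskernelG}) as a Volterra integral equation solvable by successive approximations, and then to verify the transmutation identity by direct differentiation of (\ref{transmutation1}), with surjectivity following from a Neumann-series inversion.

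For part (i), I would view (\ref{PDEkernel1}) as a linear hyperbolic equation in characteristic form, with data prescribed on the two rays $\{t=0\}$ and $\{r=0\}$. Rewriting
\[
2(1-t)G_{rt}^f = G_r^f - rG_{rr}^f + rq_f(r)G^f,
\]
and integrating in both $r$ (using $G^f(0,t)=0$) and $t$ (using $G^f(r,0)=\int_0^r\tau q_f(\tau)\,d\tau$) yields an integral equation of Volterra type on the compact rectangle $[0,\varrho_{\Omega}]\times[0,1]$. Picard iteration converges uniformly and produces a continuous solution; differentiating the integral form and bootstrapping against the hypothesis $q_f\in C^1[0,\varrho_{\Omega}]$ lifts the regularity to $C^2$.

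For the forward direction of part (ii), set $u=\mathbf{T}_f h$ with $h\in\operatorname{Har}(\Omega)$ and compute $(-\bigtriangleup+q_f)u$. Since harmonicity is preserved under the rescaling $z\mapsto \sigma^2 z$, the function $h(\sigma^2 z)$ is harmonic in $z$; because $G^f(r,1-\sigma^2)$ depends on $z$ only through $r=|z|$,
\[
\bigtriangleup_{z}\!\left[G^f(r,1-\sigma^2)\,h(\sigma^2 z)\right] = \left(G_{rr}^f+\tfrac{1}{r}G_r^f\right)h(\sigma^2 z)+2\sigma^2 G_r^f\,h_r(\sigma^2 z).
\]
Changing variables $t=1-\sigma^2$ in the integral and integrating by parts in $t$ transfers the radial derivative of $h$ onto $G^f$ and produces the mixed derivative $G_{rt}^f$, together with a boundary term at $t=0$. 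The PDE (\ref{PDEkernel1}) cancels the bulk terms against $q_f u$, and the initial condition $G^f(r,0)=\int_0^r\tau q_f(\tau)\,d\tau$ is exactly what is needed to annihilate the surviving boundary contribution; hence $u\in\operatorname{Sol}^{\mathbf{S}_f}(\Omega)$.

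For the surjectivity and uniqueness of $h$, I would note that $\mathbf{T}_f=I+\mathcal{V}$, where $\mathcal{V}$ is a Volterra-type integral operator in $\sigma\in[0,1]$ with continuous kernel $\sigma G^f(r,1-\sigma^2)$, hence invertible by the Neumann series acting on $C(\Omega)$. Given $u\in\operatorname{Sol}^{\mathbf{S}_f}(\Omega)$, define $h:=(I+\mathcal{V})^{-1}u$; running the identity of the forward direction in reverse forces $\bigtriangleup h=0$, so $h\in\operatorname{Har}(\Omega)$ and is unique. The main obstacle is the integration-by-parts step: one must coordinate the change of variable $t=1-\sigma^2$, the transfer of radial derivatives from $h(\sigma^2 z)$ onto $G^f(r,t)$, and the cancellation of the boundary term at $t=0$ so that (\ref{PDEkernel1}) and the initial datum together produce the exact identity $(-\bigtriangleup+q_f)u=0$. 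The weight $\sigma$ in (\ref{transmutation1}) and the specific form of $G^f(r,0)$ are engineered precisely for this cancellation.
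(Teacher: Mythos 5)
You should note first that the paper does not prove this theorem at all: it is imported verbatim from Gilbert--Atkinson \cite{gilbertatkinson} (with the further mapping properties taken from \cite{mine1}), so there is no internal proof to compare against. Judged on its own, your verification of the transmutation identity in part (ii) is the right computation: writing $\bigtriangleup_z[G^f(r,1-\sigma^2)h(\sigma^2z)]$ via the product rule, substituting $t=1-\sigma^2$, integrating by parts in $\sigma$, and using $G^f_r(r,0)=rq_f(r)$ (from the first condition in (\ref{initialconditionskernelG})) to kill the boundary term is exactly how the identity $(\bigtriangleup-q_f)\mathbf{T}_fh=0$ for harmonic $h$ is obtained, and it also yields the weighted relation $r^2(\bigtriangleup-q_f)\mathbf{T}_fh=\mathbf{T}_f[r^2\bigtriangleup h]$ needed for the converse.

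The genuine gaps are in your two existence arguments. For (i), equation (\ref{PDEkernel1}) is \emph{not} in Goursat form in the variables $(r,t)$: besides $G^f_{rt}$ it contains the principal-order term $rG^f_{rr}$, so after your rewriting the ``Volterra'' right-hand side still contains a second derivative of the unknown, and Picard iteration does not close in $C^0$ (there is no gain of derivatives and no contraction). The operator is moreover degenerate: the coefficient of $G^f_{rr}$ vanishes at $r=0$ and that of $G^f_{rt}$ vanishes at $t=1$; in characteristic coordinates $\xi=t$, $\eta=r^2(1-t)$ one gets an equation of the form $G_{\xi\eta}+aG_\eta+bG=0$ with $a\sim(1-t)^{-1}$, $b\sim(1-t)^{-2}$, singular precisely at $t=1$ (i.e.\ $\sigma=0$), which lies in the closed rectangle on which $C^2$ regularity is claimed. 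Controlling this degeneracy is the actual content of the Bergman/Gilbert--Atkinson construction (series in $t$ with coefficients solving ODEs in $r$, plus majorant estimates), and it is absent from your sketch. For the inversion, your Neumann-series argument is unjustified as stated: an integral operator over the fixed interval $\sigma\in[0,1]$ with continuous kernel need not have norm $<1$, and there is no factorial gain in that parametrization. The fix is to substitute $\rho=\sigma^2r$ and view $\mathcal{V}$ along each ray as a Volterra operator with kernel $\tfrac{1}{2r}G^f(r,1-\rho/r)$ on $0\leqslant\rho\leqslant r$, which is bounded because $G^f(0,t)=0$ and $G^f\in C^1$ give $|G^f(r,t)|\leqslant Cr$; only then do the iterated kernels decay like $1/n!$. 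Finally, to ``run the identity in reverse'' you must know that $h=\mathbf{T}_f^{-1}u$ lies in $C^2(\Omega)$, i.e.\ that the inverse preserves $C^2$; this is a real (if routine) extra step, proved in \cite{mine1}, and without it the conclusion $\bigtriangleup h=0$ does not follow.
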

 Furthermore, the operator $\mathbf{T}_f$ is a {\it transmutation operator}, in the sense of the following definition.
 \begin{definition}[\cite{transmutation1}]\label{deftransmop}
 	Let $E$ be a topological vector space, $E_{1}\subset
 	E$ a linear subspace (not necessarily closed), and $\mathbf{A},\mathbf{B}%
 	:E_{1}\rightarrow E$ linear operators. A linear invertible operator
 	$\mathbf{T}:E\rightarrow E$, such that $E_{1}$ is $\mathbf{T}$-invariant, is
 	said to be a \textbf{transmutation operator} for the pair of operators
 	$\mathbf{A},\mathbf{B}$, if the following conditions are fulfilled:
 	
 	\begin{enumerate}
 		\item Both the operator $\mathbf{T}$ and its inverse $\mathbf{T}^{-1}$ are
 		continuous in $E$.
 		
 		\item The following operator equality is valid
 		\begin{equation}
 			\mathbf{AT}=\mathbf{TB}\quad\mbox{ in }E_{1}.\label{transmutationdef}%
 		\end{equation}
 		
 	\end{enumerate}
 \end{definition}
 In this case, $E=C(\Omega)$ with the topology of the uniform convergence on compact subsets and $E_1=C^2(\Omega)$.
 \begin{theorem}[\cite{mine1}]\label{thtransmutationradial1}
 	The operator $\mathbf{T}_f$ and its inverse are continuous in the Fr\'echet space $C(\Omega)$, and $\mathbf{T}_f$ is a transmutation operator for the pair of operators $r^2\mathbf{S}_f$ and $r^2\bigtriangleup$. Explicitly, the following relation holds
 	\begin{equation}\label{transrelation1}
 		r^2\mathbf{S}_f \mathbf{T}_fu=\mathbf{T}_fr^2\bigtriangleup u, \quad \forall u\in C^2(\Omega).
 	\end{equation}
 \end{theorem}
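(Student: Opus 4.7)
The plan is to verify both requirements of Definition~\ref{deftransmop} separately. For continuity of $\mathbf{T}_f$ in the Fr\'echet space $C(\Omega)$, fix a compact $K\subset\Omega$; because $\Omega$ is star-shaped with respect to $0$, the set $\widetilde{K}:=\{sz\,:\,z\in K,\,s\in[0,1]\}$ is a compact subset of $\Omega$, and for every $z\in K$ and $\sigma\in[0,1]$ one has $\sigma^{2}z\in\widetilde{K}$. The pointwise bound
\[
\max_{z\in K}\,|\mathbf{T}_f h(z)|\,\leqslant\,\Bigl(1+\tfrac{1}{2}\max_{(r,t)\in[0,\varrho_{\Omega}]\times[0,1]}|G^{f}(r,t)|\Bigr)\max_{w\in\widetilde{K}}|h(w)|
\]
then gives continuity between two of the defining semi-norms of $C(\Omega)$. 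For invertibility, the substitution $t=1-\sigma^{2}$ rewrites $\mathbf{T}_f=I+\mathbf{V}$, where $\mathbf{V}h(z)$ uses only values of $h$ on the segment $[0,z]$ and acts as a Volterra integral operator in $t\in[0,1]$; $\mathbf{V}$ is therefore quasi-nilpotent and the Neumann series $\mathbf{T}_f^{-1}=\sum_{n=0}^{\infty}(-\mathbf{V})^{n}$ converges in $C(\Omega)$. Alternatively, \cite{mine1} exhibits an explicit inversion kernel of the same structure, for which the same estimate yields continuity of $\mathbf{T}_f^{-1}$.

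To prove the identity~(\ref{transrelation1}), take $u\in C^{2}(\Omega)$ and, after the substitution $\tau=1-\sigma^{2}$, write $\mathbf{T}_f u(z)=u(z)+\tfrac{1}{2}\int_{0}^{1}G^{f}(r,\tau)\,U_{\tau}(z)\,d\tau$ with $U_{\tau}(z):=u((1-\tau)z)$. Using the chain-rule identity $\bigtriangleup_{z}U_{\tau}(z)=(1-\tau)^{2}(\bigtriangleup u)((1-\tau)z)$ and Leibniz in polar coordinates,
\[
\bigtriangleup_{z}\!\bigl(G^{f}(r,\tau)U_{\tau}(z)\bigr)=\bigl(G^{f}_{rr}+\tfrac{1}{r}G^{f}_{r}\bigr)U_{\tau}+2(1-\tau)G^{f}_{r}\,u_{r}((1-\tau)z)+(1-\tau)^{2}G^{f}(\bigtriangleup u)((1-\tau)z).
\]
Multiplying by $r^{2}$, the last summand reproduces the integrand of $\mathbf{T}_f(r^{2}\bigtriangleup u)(z)$ exactly (since $|(1-\tau)z|^{2}=(1-\tau)^{2}r^{2}$), and therefore cancels when one forms the difference $r^{2}\mathbf{S}_f\mathbf{T}_f u-\mathbf{T}_f(r^{2}\bigtriangleup u)$. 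What remains reduces, after clearing a factor of $r/2$, to the scalar identity
\[
2rq_{f}(r)u(z)=\int_{0}^{1}\!\Bigl\{\bigl(rG^{f}_{rr}+G^{f}_{r}-rq_{f}G^{f}\bigr)u((1-\tau)z)+2r(1-\tau)G^{f}_{r}\,u_{r}((1-\tau)z)\Bigr\}d\tau.
\]

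The key algebraic step is now to invoke the defining PDE~(\ref{PDEkernel1}), rearranged as $rG^{f}_{rr}+G^{f}_{r}-rq_{f}G^{f}=2\bigl(G^{f}_{r}-(1-\tau)G^{f}_{r\tau}\bigr)$, which collapses the right-hand integrand (up to a factor of $2$) into the exact $\tau$-derivative
\[
-\partial_{\tau}\!\Bigl[(1-\tau)G^{f}_{r}(r,\tau)\,u((1-\tau)z)\Bigr],
\]
using only $\partial_{\tau}u((1-\tau)z)=-r\,u_{r}((1-\tau)z)$. Integrating from $0$ to $1$ annihilates the boundary value at $\tau=1$, and by the initial condition $G^{f}(r,0)=\int_{0}^{r}\sigma q_{f}(\sigma)\,d\sigma$ from~(\ref{initialconditionskernelG})---which yields $G^{f}_{r}(r,0)=rq_{f}(r)$---the boundary value at $\tau=0$ produces exactly $rq_{f}(r)u(z)$, closing the identity. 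The principal obstacle is the bookkeeping required to isolate this residual integrand after the Leibniz expansion and then to recognise its total-derivative structure; once spotted, the PDE and the initial data of $G^{f}$ finish the proof without further input, in the spirit of the computation in \cite{mine1}.
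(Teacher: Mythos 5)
This theorem is imported by citation from \cite{mine1}; the paper contains no internal proof, so your argument can only be judged on its own merits and against the source it parallels. Your verification of the intertwining identity is correct and is essentially the computation of \cite{mine1}: the substitution $t=1-\sigma^{2}$, the Laplacian product rule, the cancellation of the $(1-\tau)^{2}G^{f}(\bigtriangleup u)((1-\tau)z)$ terms against $\mathbf{T}_f(r^{2}\bigtriangleup u)$, the rearrangement of (\ref{PDEkernel1}) into $rG^{f}_{rr}+G^{f}_{r}-rq_fG^{f}=2\bigl(G^{f}_{r}-(1-\tau)G^{f}_{r\tau}\bigr)$, the recognition of the exact $\tau$-derivative $-\partial_{\tau}\bigl[(1-\tau)G^{f}_{r}(r,\tau)u((1-\tau)z)\bigr]$, and the boundary evaluation via $G^{f}_{r}(r,0)=rq_f(r)$ all check out. (A cosmetic remark: since the paper defines $\mathbf{S}_f=-\bigtriangleup+q_f$, what you actually prove, and what is intended, is $r^{2}(\bigtriangleup-q_f)\mathbf{T}_f u=\mathbf{T}_f r^{2}\bigtriangleup u$; as literally written, (\ref{transrelation1}) carries a sign slip inherited from that definition.) Your continuity estimate for $\mathbf{T}_f$ over the star-hull is also correct and coincides with the bound the paper later records as Lemma \ref{lemmanormoperatort}.

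The genuine gap is the invertibility step. Writing $\mathbf{T}_f=I+\mathbf{V}$ with $\mathbf{V}h(z)=\tfrac12\int_0^1 G^{f}(r,t)\,h((1-t)z)\,dt$, the operator is \emph{not} Volterra in $t$ (the $t$-integration runs over all of $[0,1]$), and the mere fact that $\mathbf{V}h(z)$ uses only values of $h$ on the segment $[0,z]$ does not imply quasi-nilpotence: along a ray the kernel is $\frac{1}{2r}G^{f}\bigl(r,1-\tfrac{s}{r}\bigr)$, $0\leqslant s\leqslant r$, and the naive iteration only yields $\|\mathbf{V}^{n}h\|_{K}\leqslant\bigl(\tfrac12\|G^{f}\|_{\infty}\bigr)^{n}\|h\|_{\operatorname{Star}(K)}$, which proves nothing once $\|G^{f}\|_{\infty}\geqslant 2$; indeed, for a kernel of this homogeneity with $G^{f}$ replaced by a constant $c$, the function $h\equiv 1$ is an eigenfunction of $\mathbf{V}$ with eigenvalue $c/2$, so segment-dependence alone does not force spectral radius zero. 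The missing ingredient is the initial condition $G^{f}(0,t)=0$ from (\ref{initialconditionskernelG}): together with $G^{f}\in C^{2}$ it gives $|G^{f}(r,t)|\leqslant C r$ with $C=\|G^{f}_{r}\|_{\infty}$, so the ray kernel is bounded by $C/2$ and the standard Volterra induction gives $|\mathbf{V}^{n}h(z)|\leqslant \frac{(Cr/2)^{n}}{n!}\max_{[0,z]}|h|$, hence $\|\mathbf{V}^{n}h\|_{K}\leqslant \frac{(C\varrho_{\Omega}/2)^{n}}{n!}\|h\|_{\operatorname{Star}(K)}$; only then does the Neumann series converge in $C(\Omega)$ and furnish a continuous $\mathbf{T}_f^{-1}$ through the same seminorm estimates. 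Without this observation your ``therefore quasi-nilpotent'' is unjustified, and your fallback of invoking the explicit inverse kernel of \cite{mine1} is not an independent proof but exactly the citation the paper itself relies on.
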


The corresponding transmutation operator for the Darboux transformed operator $\mathbf{S}_{\frac{1}{f}}= \bigtriangleup-q_{\frac{1}{f}}(r)$ is denoted by $\mathbf{T}_{\frac{1}{f}}$. 

Let $W\in \operatorname{V}_f(\Omega;\mathbb{B})$. By Theorem \ref{theoremvekuaconjugate}, $u=\operatorname{Sc}W\in \operatorname{Sol}^{\mathbf{S}_f}(\Omega)$ and $v=\operatorname{Vec}W\in \operatorname{Sol}^{\mathbf{S}_{\frac{1}{f}}}(\Omega)$. Due to  Theorem \ref{thereomtransformationoperator}(ii), there exists $h_1,h_2\in \operatorname{Har}(\Omega)$ such that
\begin{equation}\label{transformationvekua}
	W=\mathbf{T}_fh_1+\mathbf{j}\mathbf{T}_{\frac{1}{f}}h_2.
\end{equation}

The idea is to use an operator of the form 
\begin{equation}
\mathcal{T}_f:= 	\mathbf{T}_f \operatorname{Sc}+\mathbf{j}\mathbf{T}_{\frac{1}{f}} \operatorname{Vec}
\end{equation}
to transform the space $\operatorname{Hol}(\Omega; \mathbb{B})$ onto $\operatorname{V}_f(\Omega;\mathbb{B})$. Similarly, we define $\mathcal{T}_{\frac{1}{f}}$. Operators of form (\ref{transformationvekua}) have been used to transform holomorphic functions into solutions of the Vekua equation for the case when $f$ is a function that only depends on the first component $x=\operatorname{Re}z$ (see \cite{hugo,transhiperbolic}). Before studying the transmutation property of $\mathcal{T}_f$ we establish their analytical properties. 
\begin{remark}\label{blinearity}
	It is clear that the operator $\mathcal{T}_f$ is linear in the complex space $C(\Omega; \mathbb{B})$. Consider $C(\Omega;\mathbb{B})$ as a $\mathbb{B}$-module. Take $A\in \mathbb{B}$, $W\in C(\Omega;\mathbb{B})$ and $a=\operatorname{Sc}A$, $b=\operatorname{Vec}A$, $u=\operatorname{Sc}W$, $v=\operatorname{Vec}W$. We have
	\begin{align*}
		\mathcal{T}_f[AW]& = \mathcal{T}_f[(a+\mathbf{j}b)(u+\mathbf{j}v)] = \mathcal{T}_f[(au-bv)+\mathbf{j}(bu+av)]\\
		&= \mathbf{T}_f[au-bv]+\mathbf{j}\left(\mathbf{T}_{\frac{1}{f}}[bu+av]\right)= a\mathbf{T}_fu-b\mathbf{T}_fv+\mathbf{j}\left(b\mathbf{T}_{\frac{1}{f}}u+a\mathbf{T}_{\frac{1}{f}}u\right)\\
		&= a\mathcal{T}_fW+\mathbf{j}b\mathcal{T}_{\frac{1}{f}}W.
	\end{align*}  
Thus, the operator $\mathcal{T}_f$ fails to be $\mathbb{B}$-linear. However, we have the relation 
\begin{equation}\label{noBlinearity}
	\mathcal{T}_f[AW]=\operatorname{Sc}(A)\mathcal{T}_fW+\mathbf{j}\operatorname{Vec}(A)\mathcal{T}_{\frac{1}{f}}W,\quad A\in \mathbb{B}, W\in C(\Omega; \mathbb{B}).
\end{equation}
In particular
\begin{equation}\label{relationmultiplicationbyj}
	\mathcal{T}_f[\mathbf{j}W]=\mathbf{j}\mathcal{T}_{\frac{1}{f}}W.
\end{equation}
For this reason, we consider $V_f(\Omega; \mathbb{B})$ as a complex linear space.
\end{remark}

\begin{proposition}\label{propertiesopt}
	The operator $\mathcal{T}: C(\Omega; \mathbb{B})\rightarrow C(\Omega; \mathbb{B})$ is continuous and invertible, and its inverse is given by
	\begin{equation}\label{inverseofT}
		\mathcal{T}_f^{-1}= \mathbf{T}_f^{-1} \operatorname{Sc}+\mathbf{j}\mathbf{T}_{\frac{1}{f}}^{-1} \operatorname{Vec}.
	\end{equation}
The inverse $\mathcal{T}_f^{-1}$ is also continuous in $C(\Omega; \mathbb{B})$. The same property is valid for $\mathcal{T}_{\frac{1}{f}}$.
\end{proposition}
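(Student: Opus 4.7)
The plan is to exploit the direct sum decomposition of $C(\Omega;\mathbb{B})$ induced by the scalar/vectorial parts, so that $\mathcal{T}_f$ acts diagonally as $\mathbf{T}_f$ on the scalar component and $\mathbf{T}_{\frac{1}{f}}$ on the vectorial component. Since both $\mathbf{T}_f$ and $\mathbf{T}_{\frac{1}{f}}$ are already known to be continuous bijections on the Fréchet space $C(\Omega)$ with continuous inverses (Theorem \ref{thtransmutationradial1}), the assertion will follow from a componentwise argument.

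First I would establish continuity of $\mathcal{T}_f$. By Remark \ref{remarkconvergence}, convergence $W_n\to W$ in $C(\Omega;\mathbb{B})$ is equivalent to simultaneous convergence $\operatorname{Sc}W_n\to\operatorname{Sc}W$ and $\operatorname{Vec}W_n\to\operatorname{Vec}W$ in $C(\Omega)$. The projections $W\mapsto\operatorname{Sc}W$ and $W\mapsto\operatorname{Vec}W$ are thus continuous from $C(\Omega;\mathbb{B})$ onto $C(\Omega)$, so the compositions $\mathbf{T}_f\operatorname{Sc}$ and $\mathbf{T}_{\frac{1}{f}}\operatorname{Vec}$ are continuous maps $C(\Omega;\mathbb{B})\to C(\Omega)$. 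Since $C(\Omega;\mathbb{B})$ is a topological $\mathbb{B}$-module, the inclusion $\varphi\mapsto\mathbf{j}\varphi$ of $C(\Omega)$ into $C(\Omega;\mathbb{B})$ is continuous, and addition is continuous; hence $\mathcal{T}_f=\mathbf{T}_f\operatorname{Sc}+\mathbf{j}\mathbf{T}_{\frac{1}{f}}\operatorname{Vec}$ is continuous.

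Next I would verify the formula (\ref{inverseofT}) for the inverse by checking both compositions. Writing $W=u+\mathbf{j}v$ with $u,v\in C(\Omega)$, the key observation is that $\mathbf{T}_f u$ and $\mathbf{T}_{\frac{1}{f}} v$ are complex-valued (they are produced by the integral (\ref{transmutation1}) with a complex-valued kernel $G^f$), so
\[
\operatorname{Sc}\!\bigl(\mathbf{T}_f u+\mathbf{j}\mathbf{T}_{\frac{1}{f}} v\bigr)=\mathbf{T}_f u,\qquad \operatorname{Vec}\!\bigl(\mathbf{T}_f u+\mathbf{j}\mathbf{T}_{\frac{1}{f}} v\bigr)=\mathbf{T}_{\frac{1}{f}} v.
\]
Consequently $(\mathbf{T}_f^{-1}\operatorname{Sc}+\mathbf{j}\mathbf{T}_{\frac{1}{f}}^{-1}\operatorname{Vec})\mathcal{T}_f W=\mathbf{T}_f^{-1}\mathbf{T}_f u+\mathbf{j}\mathbf{T}_{\frac{1}{f}}^{-1}\mathbf{T}_{\frac{1}{f}} v=W$, and the reverse composition is handled symmetrically. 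Continuity of $\mathcal{T}_f^{-1}$ then follows by the same componentwise argument applied to $\mathbf{T}_f^{-1}$ and $\mathbf{T}_{\frac{1}{f}}^{-1}$, which are continuous on $C(\Omega)$ by Theorem \ref{thtransmutationradial1}. The analogous statements for $\mathcal{T}_{\frac{1}{f}}$ are obtained by interchanging the roles of $f$ and $1/f$.

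No serious obstacle is expected: the argument is essentially a diagonalization of the known one-dimensional transmutation theory across the $\mathbf{p}^\pm$-decomposition, and the only point requiring care is to note that $\mathbf{T}_f$ and $\mathbf{T}_{\frac{1}{f}}$ preserve the complex-scalar nature of their arguments, so they do not mix scalar and vectorial parts.
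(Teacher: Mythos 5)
Your proof is correct and takes essentially the same route as the paper, which simply cites the continuity and invertibility of $\mathbf{T}_f$ and $\mathbf{T}_{\frac{1}{f}}$ in $C(\Omega)$ and leaves the componentwise verification as ``a simple computation''; you have just made that computation explicit, including the key observation that $\mathcal{T}_f$ acts diagonally on the $\operatorname{Sc}/\operatorname{Vec}$ decomposition and the use of Remark \ref{remarkconvergence} for componentwise convergence. Only a cosmetic point: the splitting actually used is the $\operatorname{Sc}/\operatorname{Vec}$ one rather than the $\mathbf{p}^{\pm}$ idempotent decomposition mentioned in your closing sentence, but this does not affect the argument.
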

\begin{proof}
	since $\mathbf{T}_f^{-1}$ and $\mathbf{T}_{\frac{1}{f}}$ exists and are continuous \cite[Sec. II]{mine1}, a simple computation shows that the inverse of $\mathcal{T}_f$ is given by (\ref{inverseofT}). The continuity of $\mathcal{T}_f$ and $\mathcal{T}_f^{-1}$ in $C(\Omega; \mathbb{B})$ is due to the continuity of $\mathbf{T}_f, \mathbf{T}_{\frac{1}{f}}$, $\mathbf{T}_f^{-1},$ and $\mathbf{T}_{\frac{1}{f}}^{-1}$ in $C(\Omega)$. Similar for $\mathcal{T}_{\frac{1}{f}}$.
\end{proof}

\begin{remark}\label{remarktransmutationc2}
	The operators $\mathbf{T}_f$ and $\mathbf{T}_{\frac{1}{f}}$ satisfy that $\mathbf{T}_{f}(C^2(\Omega))=C^2(\Omega)=\mathbf{T}_{\frac{1}{f}}(C^2(\Omega))$ (see \cite[Sec. 2]{mine1}). Thus, $\mathcal{T}_{f}\left(C^2(\Omega; \mathbb{B})\right)=C^2(\Omega; \mathbb{B})$. Similar for $\mathcal{T}_{\frac{1}{f}}$.
\end{remark}

\subsection{A representation for the transmutation operator of the Darboux equation}

Consider the radial component of the Schr\"odinger operator $\mathbf{S}_f$, that is given by
\begin{equation}
	\mathbf{L}_f:=\frac{d^2}{dr^2}+\frac{1}{r}\frac{d}{dr}-q_f(r)= \frac{1}{r}\frac{d}{dr}r\frac{d}{dr}-q_f(r).
\end{equation}
We assume that $f$ is normalized satisfying the initial conditions
\begin{equation}\label{initialconditionsf}
	f(0)=1, \quad \mbox{and } \lim_{r\rightarrow 0^+}rf'(r)=0.
\end{equation}
In such case, $\frac{1}{f}$ satisfies the same conditions (\ref{initialconditionsf}). 

\begin{example}\label{examplehelmholtz}
	Let $\kappa\in (0,1)$. Consider $\Omega=\mathbb{D}$ and $q(r)=-\kappa^2$ (The Helmholtz equation $\bigtriangleup u+\kappa^2u=0$). In this case, the regular solution of $f''+\frac{1}{r}f=-\kappa^2f$ in $[0,1]$ is given by $f(r)=J_0(\kappa r)$ (Bessel function of first kind). Since $\kappa<1$, $J_0(\kappa r)>0$ for $r\in [0,1]$. Also, $J_0(0)=1$ and $J'(\kappa r)= -\frac{\kappa^2r}{2}+o(r^2)$, $r\rightarrow 0^+$, and then $f$ satisfies (\ref{initialconditionsf}). In this case, the operator $\mathbf{T}_f$ is known explicitly \cite{vekua}
	\begin{equation*}
		\mathbf{T}_fh(r)=h(r)-\int_0^1\frac{\partial}{\partial \rho}J_0\left(\alpha r\sqrt{1-\sigma^2}\right)h(\sigma^2 r)d\sigma.
	\end{equation*}
Using \cite[pp. 361, Formula 9.1.28]{abramowitz}, $\frac{f'(r)}{f(r)}=-\kappa$, then the corresponding Vekua equation is given by $\overline{\boldsymbol{\partial}}W+\kappa \overline{W}=0$ in $\mathbb{D}$. Te Darboux potential is given by $q_{\frac{1}{f}}= 3\kappa^2$.
\end{example}
The Polya factorization of $\mathbf{L}_f$ is given by
\begin{equation}\label{polyaf}
	\mathbf{L}_f=\frac{1}{rf}\frac{d}{dr}rf^2\frac{d}{dr}\frac{1}{f}.
\end{equation}
In a similar way
\begin{equation}\label{polya1f}
	\mathbf{L}_{\frac{1}{f}}=\frac{f}{r}\frac{d}{dr}\frac{r}{f^2}\frac{d}{dr}f.
\end{equation}
In the case $f\equiv 1$ we obtain the radial part of the Laplacian, $\mathbf{L}_1=\frac{1}{r}\frac{d}{dr}r\frac{d}{r}$. If $\mathbf{L}$ is any of these operators, we use the notation $\widehat{\mathbf{L}}:=r^2\mathbf{L}$. Theorem \ref{thtransmutationradial1} can be formulated in terms of the radial operators as follows
\begin{equation}
	\widehat{\mathbf{L}}_f\mathbf{T}_f u(r)= \mathbf{T}_f\widehat{\mathbf{L}}_1u(r), \quad \forall u\in C^2[0,\varrho_{\Omega}]
\end{equation}
(see \cite[Sec. 7]{mine1}). Set $\mathbf{D}_f:=\frac{r}{f}\frac{d}{dr}f$. Hence we have the following factorization
\begin{equation}\label{factorizations}
	\widehat{\mathbf{L}}_f=\mathbf{D}_f \mathbf{D}_{\frac{1}{f}}, \quad \widehat{\mathbf{L}}_\frac{1}{f}= \mathbf{D}_{\frac{1}{f}}\mathbf{D}_f.
\end{equation}

\begin{remark}\label{remarkasymp}
	Let $\lambda\geqslant 0$ and consider the equation
	\begin{equation}\label{eigenvalueequation}
		\widehat{\mathbf{L}}_fu(\lambda,r)= \lambda^2 u(\lambda,r), \quad r\in (0, \varrho_{\Omega}].
	\end{equation}
	  Set $u(\lambda,r)=\frac{y(\lambda,r)}{\sqrt{r}}$. A direct computation shows that $u$ satisfies the equation
	  \begin{equation*}
	  	y''-\frac{\lambda^2-\frac{1}{4}}{r^2}y-q_f(r)y=0, \quad r\in (0, \varrho_{\Omega}],
	  \end{equation*}
  that can be written as the Perturbed Bessel equation
  \begin{equation}\label{perturbedbesselequation}
  	y''-\frac{\ell(\ell+1)}{r^2}y-q_f(r)y=0, \quad r\in (0, \varrho_{\Omega}],
  \end{equation}
with $\ell=\lambda-\frac{1}{2}\geqslant -\frac{1}{2}$. It is known that for potentials $q_f\in L_1(0, \varrho_{\Omega})$ satisfying the condition $\int_0^{\varrho_{\Omega}}r^{\mu}|q_f(r)|dr<\infty$ for some $\mu \in [0, \frac{1}{2}]$, there exists a unique solution $y\in C[0,\varrho_{\Omega}]\cap C^2(0,\varrho_{\Omega})$ that satisfies the asymptotic relations (see \cite{KTBesseltrans})
\begin{equation}\label{asymptBessel}
y(\lambda,r)\sim r^{\lambda+\frac{1}{2}}, \quad y'(\lambda,r)\sim \left(\lambda+\frac{1}{2}\right)r^{\lambda-\frac{1}{2}}, \quad r\rightarrow 0^+.
\end{equation}
In particular this is valid for $q_f\in C^1[0,\varrho_{\Omega}]$. Since $u=\frac{y}{\sqrt{r}}$ and $u'=-\frac{1}{2}\frac{u}{r}+\frac{y'}{\sqrt{r}}$, from (\ref{asymptBessel}) we obtain the following asymptotic relations
\begin{equation}\label{asymp2}
	u(\lambda,r) \sim r^{\lambda}, \quad u'(\lambda,r)\sim \lambda r^{\lambda-1}, \quad r\rightarrow 0^+.
\end{equation}
For this reason we call $u(\lambda,r)$ the regular solution of (\ref{eigenvalueequation}) at $r=0$. In this way, $f$ is the regular solution of (\ref{eigenvalueequation}) for the case $\lambda=0$.
\end{remark}
Suppose that $\lambda\geqslant 0$ and let $v(\lambda,r)$ be the regular solution at $r=0$ of $\widehat{\mathbf{L}}_{\frac{1}{f}}v(\lambda,r)=\lambda v(\lambda,r)$. By (\ref{factorizations}), $u(\lambda, r)=\mathbf{D}_fv(\lambda,r)$ is a solution of (\ref{eigenvalueequation}). Furthermore, a direct computation shows that
\begin{align*}
	u & = rv'+r\frac{f'}{f}v, \\
	u' & =rv''+v'-rq_{\frac{1}{f}}v+r\left(\frac{f'}{f}\right)^2v+r\frac{f'}{f}v'\\
	& = \frac{\lambda^2v}{r}+r\left(\frac{f'}{f}\right)^2v+r\frac{f'}{f}v'.
\end{align*}
Since $v^{(k)}(r)\sim \frac{d^k}{dr^k}r^{\lambda}$, $r\rightarrow 0^+$ for $k=0,1$, and $f$ satisfies (\ref{initialconditionsf}), we have that $u^{(k)}(r)\sim \lambda\frac{d^k}{dr^k}r^{\lambda}$, $r\rightarrow 0^+$ for $k=0,1$. Thus, $u=\lambda \tilde{u}$, where $\tilde{u}$ is the regular solution at $r=0$ of (\ref{eigenvalueequation}).
 A right inverse of $\mathbf{D}_f$ is given by $\mathbf{I}_fu(r)=\frac{1}{f(r)}\left(\int_0^r\frac{f(s)u(s)}{s}ds+C\right)$, with $C\in \mathbb{C}$. Note that when $f\equiv 1$, the Darboux transformation is nothing but $\mathbf{D}_1=r\frac{d}{dr}$. Following the ideas from \cite{transmutationdarboux}, we look to show that the transmutation operator $\mathbf{T}_{\frac{1}{f}}$ can be written in a suitable form as the composition $\mathbf{T}_{\frac{1}{f}}= \mathbf{I}_f \mathbf{T}_f\mathbf{D}_1$, in some appropriate subspace, in such away that the following diagram commutes.
\[
\xymatrix{
\widehat{\mathbf{L}}_1+\lambda \ar[r]^{\mathbf{T}_f}& \widehat{\mathbf{L}}_f+\lambda \ar[d]^{\mathbf{I}_f}  \\
\widehat{\mathbf{L}}_1+\lambda \ar[u]^{\mathbf{D}_1}\ar[r]^{\mathbf{T}_{\frac{1}{f}}} & \widehat{\mathbf{L}}_{\frac{1}{f}}+\lambda 
}
\]

The reason of use the composition with the integral operator $\mathbf{I}_f$ instead of looking for the relation $\mathbf{T}_\frac{1}{f}=\mathbf{D}_{\frac{1}{f}}\mathbf{T}_f$ is to obtain a bounded operator.

\begin{remark}\label{remarktransmutespowers}
	Given $\lambda\geqslant 0$,  the regular solution at $r=0$ of $\widehat{\mathbf{L}}_1h(\lambda,r)=\lambda^2h(\lambda,r)$ is just $h(\lambda,r)=r^{\lambda}$. Hence $\mathbf{T}_f\left[r^{\lambda}\right]$ is a solution of (\ref{eigenvalueequation}).  Changing variables, the operator $\mathbf{T}_f$ can be written as
	\[
	\mathbf{T}_f\left[r^{\lambda}\right]= r^{\lambda}+\frac{1}{2}\int_0^1G^f(r,t)(1-t)^{\lambda}r^{\lambda}dt.
	\]
	Thus, $\frac{1}{r^{\lambda}}\mathbf{T}_f\left[r^{\lambda}\right]=1+\int_0^1G^f(r,t)(1-t)^{\lambda}dt\rightarrow 1$,  when $r\rightarrow 0^+$, by condition (\ref{initialconditionskernelG}). On the other hand
	\[
	\frac{1}{\lambda r^{\lambda-1}}\frac{d}{dr}\mathbf{T}_f\left[r^{\lambda}\right]= 1+\frac{1}{2}\int_0^1\left\{G_r^f(r,t)(1-t)^{\lambda}r+G^f(r,t)(1-t)^{\lambda} \right\}dt
	\]
	thar tends to $1$ when $r\rightarrow 0^+$. Hence $u(\lambda,r)=\mathbf{T}_f[r^{\lambda}]$ is the regular solution of (\ref{eigenvalueequation}) at $r=0$. In particular $f(r)=\mathbf{T}_f[1]$.
\end{remark}
We denote the set of all polynomial functions $p(r)=\sum_{n=0}^Na_nr^n$ in $[0,\varrho_{\Omega}]$ by $\mathcal{P}[0, \varrho_{\Omega}]$.
\begin{theorem}
	 For all $p\in \mathcal{P}[0,\varrho_{\Omega}]$,  the following equality is valid
	\begin{equation}\label{transmutation2firstform}
		\mathbf{T}_{\frac{1}{f}}p(r)= \frac{1}{f(r)}\left(\int_0^r\frac{f(s)\mathbf{T}_f\left[sp'(s)\right]}{s}ds+p(0)\right), \quad r\in [0,\varrho_{\Omega}].
	\end{equation}
\end{theorem}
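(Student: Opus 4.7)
The plan is to exploit linearity of both sides in $p$ and reduce the verification to the monomial case $p(r)=r^n$, $n\in\mathbb{N}_0$. The key ingredient, already established in the paragraph preceding the theorem, is the identity
\[
\mathbf{D}_f\mathbf{T}_{\frac{1}{f}}[r^n]=n\,\mathbf{T}_f[r^n],\qquad n\geqslant 0,
\]
obtained by checking via the factorizations (\ref{factorizations}) that the left-hand side solves (\ref{eigenvalueequation}) with eigenvalue $n^2$, matching its leading asymptotics (\ref{asymp2}) at $r\to 0^+$, and invoking uniqueness of the regular solution.

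For the base case $n=0$, I would observe that $p'\equiv 0$ kills the integral and $p(0)=1$, so the right-hand side collapses to $1/f(r)$. Since $1/f$ is a non-vanishing scalar solution of the Darboux radial equation $\mathbf{L}_{1/f}(1/f)=0$ satisfying the normalization (\ref{initialconditionsf}), applying Remark \ref{remarktransmutespowers} to the Darboux potential $q_{1/f}$ yields $\mathbf{T}_{\frac{1}{f}}[1]=1/f$, matching the right-hand side.

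For $n\geqslant 1$, one has $p(0)=0$ and $sp'(s)=ns^n$, so the right-hand side becomes $\mathbf{I}_f[n\mathbf{T}_f[r^n]](r)$ with integration constant $C=0$. Using $\mathbf{D}_f g=(r/f)(fg)'$, an immediate antiderivative computation gives the general identity
\[
\mathbf{I}_f[\mathbf{D}_f g](r)=\frac{1}{f(r)}\int_0^r (fg)'(s)\,ds=g(r)-\frac{g(0)}{f(r)},
\]
where $f(0)=1$ from (\ref{initialconditionsf}) is used. Taking $g=\mathbf{T}_{\frac{1}{f}}[r^n]$, the key identity rewrites $n\mathbf{T}_f[r^n]$ as $\mathbf{D}_f g$, and the asymptotics $g\sim r^n\to 0$ as $r\to 0^+$ kill the boundary term, so the right-hand side reduces to $g=\mathbf{T}_{\frac{1}{f}}[r^n]$, which is exactly the left-hand side.

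The main obstacle is the asymptotic/uniqueness step behind the key identity $\mathbf{D}_f\mathbf{T}_{\frac{1}{f}}[r^n]=n\mathbf{T}_f[r^n]$; it depends crucially on the normalization (\ref{initialconditionsf}) guaranteeing $rf'/f\to 0$, so that the unwanted term in $\mathbf{D}_f v=rv'+r(f'/f)v$ vanishes in the limit $r\to 0^+$ and matching of leading orders with the regular solution of (\ref{eigenvalueequation}) becomes legitimate. The preparatory discussion supplies this ingredient, and linearity of $\mathbf{T}_f$ and of the integral then extends the verified identity from the monomials $\{r^n\}_{n=0}^{\infty}$ to all of $\mathcal{P}[0,\varrho_{\Omega}]$.
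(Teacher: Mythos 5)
Your proposal is correct, but it reaches the formula by a route that is organized differently from the paper's. The paper works directly with the right-hand side $g_n(r)=\frac{1}{f(r)}\int_0^r\frac{f(s)}{s}\,\mathbf{T}_f[n s^n]\,ds$: it checks by L'Hopital that $g_n\sim r^n$ and $g_n'\sim n r^{n-1}$ as $r\to 0^+$, shows $\widehat{\mathbf{L}}_{\frac{1}{f}}g_n=n^2 g_n+\frac{c}{f}$ by applying $\mathbf{D}_f$ and using the known relation $\widehat{\mathbf{L}}_f\mathbf{T}_f=\mathbf{T}_f\widehat{\mathbf{L}}_1$, kills the constant $c$ by evaluating at $r=0$, and only then invokes uniqueness of the regular solution to identify $g_n=\mathbf{T}_{\frac{1}{f}}[r^n]$. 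You instead start from $\mathbf{T}_{\frac{1}{f}}[r^n]$, combine the preparatory Darboux computation (the paragraph before Remark \ref{remarktransmutespowers}) with that remark applied to both $q_f$ and $q_{\frac{1}{f}}$ to obtain $\mathbf{D}_f\mathbf{T}_{\frac{1}{f}}[r^n]=n\,\mathbf{T}_f[r^n]$ — i.e.\ the first relation of (\ref{transmutationrelations2}) restricted to monomials — and then integrate exactly, $\mathbf{I}_f[\mathbf{D}_f g]=g-\frac{g(0)}{f}$, with $g(0)=0$ and $f(0)=1$ fixing the constant at the first-order level. This bypasses the L'Hopital asymptotics and the ``$+\,c/f$'' argument, at the price of pushing the uniqueness/asymptotics work into the key identity; that step does require applying Remark \ref{remarktransmutespowers} to the Darboux potential $q_{\frac{1}{f}}=2(f'/f)^2-q_f\in C^1[0,\varrho_{\Omega}]$, which is legitimate and is exactly what the paper itself does in its $n=0$ case. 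Two small points you leave implicit and should record: your antiderivative identity uses $f\,\mathbf{T}_{\frac{1}{f}}[r^n]\in C^1[0,\varrho_{\Omega}]$ (true because $G^{\frac{1}{f}}\in C^2$), and this simultaneously shows the integrand of (\ref{transmutation2firstform}) is continuous at $s=0$, a point the paper verifies explicitly. In effect you prove (\ref{transmutationrelations2}) on monomials first and deduce (\ref{transmutation2firstform}), whereas the paper proves (\ref{transmutation2firstform}) first and obtains (\ref{transmutationrelations2}) as a corollary; both arguments rest on the same pillars (Remark \ref{remarktransmutespowers} and uniqueness of regular solutions), so no circularity is introduced.
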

\begin{proof}
	Let $n\in \mathbb{N}_0$ and $h_n(r)=r^n$. If $n=0$, $h_0\equiv 1$. Then the right hand side of (\ref{transmutation2firstform}) is $\frac{1}{f}$. On the other hand, by Remark (\ref{remarktransmutespowers}) $\mathbf{T}_{\frac{1}{f}}[1]$ is the regular solution of $\widehat{\mathbf{L}}_{\frac{1}{f}}y=0$ at $r=0$, that is given by $\frac{1}{f}$, and hence the equality is valid. For $n\geqslant 1$, denote\\ $g_n(r)=\left(\frac{1}{f(r)}\int_0^r\frac{f(s)\mathbf{T}_f\left[sh'_n(s)\right]}{s}ds+h_n(0)\right)$. We have
	\begin{align*}
		g_n(r) & = \frac{1}{f(r)}\int_0^r\frac{f(s)\mathbf{T}_f\left[sh'_n(s)\right]}{s}ds\\
		& = \frac{1}{f(r)}\int_0^r\frac{f(s)}{s}\mathbf{T}_f[ns^n]ds,
	\end{align*}
	and note that
	\[
	\frac{1}{s}\mathbf{T}_f[s^n]=s^{n-1}+\int_0^1G^f(r,t)(1-t)^ns^{n-1}ds,
	\]
	that is continuous in the interval $[0, \varrho_{\Omega}]$. Hence the integral in $g_n(r)$ is well defined in $[0,\varrho_{\Omega}]$. Thus,
	\[
	g_n(r)=\frac{n}{f(r)}\int_0^rf(s)s^{n-1}ds+\frac{n}{f(r)}\int_0^rf(s)\left[\int_0^1G^f(s,t)(1-t)^ns^{n-1}dt\right] ds.
	\] 
	By the L'Hopital rule we have
	\begin{align*}
		\lim_{r\rightarrow 0^+}\frac{n}{r^n}\int_0^rf(s)s^{n-1} & =\lim_{r\rightarrow 0^+}\frac{nf(r)r^{n-1}}{nr^{n-1}}= \lim_{r\rightarrow 0^+}f(r)=1.
	\end{align*}
 For the second integral, using the L'Hopital rule we have
\begin{align*}
	\lim_{r\rightarrow 0^+}\frac{1}{r^n}\int_0^rf(s)\left[\int_0^1G^f(s,t)(1-t)^ns^{n-1}dt\right]& = \lim_{r\rightarrow 0^+}\frac{1}{nr^{n-1}}f(r)\int_0^rG^f(r,t)(1-t)^nr^{n-1}dt\\
	& = \frac{1}{n}\lim_{r\rightarrow 0^+}f(r)\int_0^rG^f(r,t)(1-t)^ndt=0.
\end{align*}
Hence $g_n(r)\sim r^n$, $r\rightarrow 0^+$. For the derivative,
\[
g_n'(r)= -\frac{nf'(r)}{f(r)}\int_0^rf(s)\mathbf{T}_f[s^n]ds+\frac{n}{r}\mathbf{T}_f[r^n],
\] and then
\[
\frac{g'(r)}{nr^{n-1}}= -\frac{rf'(r)}{f^2(r)}\left(\frac{1}{r^{n}}\int_0^rf(s)\mathbf{T}_f[s^n]ds\right)+\frac{1}{r^n}\mathbf{T}_f[r^n].
\]
  We have just proven that $\lim_{r\rightarrow 0^+}\frac{n}{r^n}\int_0^rf(s)\mathbf{T}_f[s^n]=1$, then by (\ref{initialconditionsf}) and Remark \ref{remarktransmutespowers} we have that $g'(r)\sim nr^{n-1}$, $r\rightarrow 0^+$. Hence, $g_n(r)$ satisfies the asymptotic conditions (\ref{asymp2}). Finally, denote the operator of the right-hand side of (\ref{transmutation2firstform}) by $\widetilde{T}_2h_n$ and note that $\widehat{\mathbf{L}}_1[rh_n']= n^2rh'_n$ . Hence we have
\[
\widehat{\mathbf{L}}_{\frac{1}{f}}\widetilde{T}_2h_n= \mathbf{D}_{\frac{1}{f}}\mathbf{D}_f\widetilde{T}_2h_n=\mathbf{D}_{\frac{1}{f}}\mathbf{T}_f[rh_n']
\] and then 
\[
\mathbf{D}_{f} \widehat{\mathbf{L}}_{\frac{1}{f}}\widetilde{T}_2[h_n]=\widehat{\mathbf{L}}_f\mathbf{T}_f[rh_n']=\mathbf{T}_f\widehat{\mathbf{L}}_1[rh_n']=n^2\mathbf{T}_f[rh_n'].
\]
Since $\mathbf{D}_f\widetilde{T}_2h_n=\mathbf{T}_f[rh_n']$ we obtain that $\mathbf{D}_f\left(\widehat{\mathbf{L}}_{\frac{1}{f}}\widetilde{T}_2h_n\right)=n^2\mathbf{D}_f\widetilde{T}_fh_n$ in the interval $(0,\varrho_{\Omega}]$. Then $\widehat{\mathbf{L}}_{\frac{1}{f}}\widetilde{T}_2h_n=n^2\widetilde{T}_2h_n+\frac{c}{f}$ for some constant $c\in \mathbb{C}$. Note that $\widetilde{T}_2h_n(0)=0$ and 
\[
\mathbf{D}_{\frac{1}{f}}\mathbf{T}_f[rh_n']=rf\left(-\frac{f'}{f^2}\mathbf{T}_f[rh_n']+\frac{1}{f}\frac{d}{dr}\mathbf{T}_f[rh_n']\right).
\]
We have
\[
\frac{d}{dr}\mathbf{T}_f[r^n]=r^{n-1}+\int_0^1(1-t)^n\left\{G_r^f(r,t)r^n+nG(r,t)r^{n-1}\right\}.
\]
By the conditions (\ref{initialconditionskernelG}), $\frac{d}{dr}\mathbf{T}_f[r^n]=r^{n-1}\big{|}_{r=0}$. Thus, by (\ref{initialconditionsf}), $\mathbf{D}_{\frac{1}{f}}\mathbf{T}_f[rh_n']\big{|}_{r=0}=0$, which implies $\widehat{\mathbf{L}}_{\frac{1}{f}}\widetilde{T}_2h_n(0)=0$. Hence  $\widehat{\mathbf{L}}_{\frac{1}{f}}\widetilde{T}_2h_n=n^2\widetilde{T}_2h_n$. Since $\widetilde{T}h_n$ satisfies (\ref{asymp2}), it must be the regular solution at $r=0$. Thus, $\mathbf{T}_{\frac{1}{f}}h_n=\widetilde{T}_2h_n$. The equality is fulfilled for all $p\in \mathcal{P}[0,\varrho_{\Omega}]$ by the linearity of $\mathbf{T}_{\frac{1}{f}}$ and $\widetilde{T}_2$. 
\end{proof}

\begin{theorem}\label{theoremtransmutation2formulae}
	For all $u\in C^1[0,\varrho_{\Omega})$, the following equality holds
	\begin{equation}\label{transmutation2formulac1}
		\mathbf{T}_{\frac{1}{f}}u(r)=\frac{1}{f(r)}\left(\int_0^rf(s)\mathbf{T}_f[sh'(s)]ds+u(0) \right), \quad r\in [0,\varrho_{\Omega}).
	\end{equation}
\end{theorem}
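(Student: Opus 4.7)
The plan is to extend the polynomial identity (\ref{transmutation2firstform}) to all $u\in C^1[0,\varrho_{\Omega})$ by a density argument, relying on the continuity of the operators $\mathbf{T}_f$ and $\mathbf{T}_{\frac{1}{f}}$ in the Fr\'echet space $C(\Omega)$.

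First I would fix $r_0\in[0,\varrho_{\Omega})$ and pick an intermediate $r\in(r_0,\varrho_{\Omega})$. The classical Weierstrass approximation theorem applied to $u'\in C[0,r]$, followed by one antidifferentiation with the constant adjusted so that $p_n(0)\to u(0)$, produces polynomials $p_n\in\mathcal{P}[0,\varrho_{\Omega}]$ with $p_n\to u$ and $p_n'\to u'$ uniformly on $[0,r]$. Each $p_n$ satisfies (\ref{transmutation2firstform}), and the task reduces to passing to the limit $n\to\infty$ in both sides evaluated at $r_0$.

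On the left-hand side, continuity of $\mathbf{T}_{\frac{1}{f}}$ on $C(\Omega)$ (Proposition \ref{propertiesopt}) combined with the uniform convergence $p_n\to u$ on $\overline{B_{r}^{\mathbb{C}}(0)}\subset\Omega$ (viewing these radial polynomials as functions on $\Omega$) gives $\mathbf{T}_{\frac{1}{f}}p_n(r_0)\to \mathbf{T}_{\frac{1}{f}}u(r_0)$. On the right-hand side, $p_n(0)\to u(0)$ is immediate; and setting $g_n(s):=sp_n'(s)$, $g(s):=su'(s)$, one has $g_n\to g$ uniformly on $[0,r]$ with $g_n(0)=g(0)=0$, so continuity of $\mathbf{T}_f$ yields $\mathbf{T}_f[g_n]\to\mathbf{T}_f[g]$ uniformly on $[0,r]$.

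The main obstacle is the behavior of the integrand near $s=0$, where the factor $1/s$ in $f(s)\mathbf{T}_f[g_n](s)/s$ is only formally singular. Using the Volterra representation (\ref{transmutation1}),
\[
\mathbf{T}_f[g](s)=g(s)+\int_0^1\sigma\,G^f(s,1-\sigma^2)\,g(\sigma^2 s)\,d\sigma,
\]
together with the initial condition $G^f(0,t)=0$ from (\ref{initialconditionskernelG}) and the vanishing $g(0)=0$, one checks that $\mathbf{T}_f[g](s)/s$ extends continuously to $[0,r]$, with a bound controlled by $\|g'\|_{C[0,r]}+\sup_{0<s\leqslant r}|g(s)/s|$; the same estimate is uniform on $\{g_n\}$ since the $C^1$-norms $\|p_n\|_{C^1[0,r]}$ stay bounded. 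Dominated convergence then yields convergence of the integrals, and assembling the pieces and dividing by $f(r_0)$ produces the identity at $r_0$. Since $r_0\in[0,\varrho_{\Omega})$ was arbitrary, the formula holds throughout $[0,\varrho_{\Omega})$.
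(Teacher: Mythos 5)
Your proposal is correct and follows essentially the same route as the paper: approximate $u$ in $C^1$ on $[0,\rho]$ by polynomials via the Weierstrass theorem, invoke the polynomial identity (\ref{transmutation2firstform}), and pass to the limit using the continuity of $\mathbf{T}_f$ and $\mathbf{T}_{\frac{1}{f}}$ (equivalently, the continuity of the kernels $G^f$ and $G^{\frac{1}{f}}$). Your additional verification that $\mathbf{T}_f[su'(s)]/s$ extends continuously to $s=0$ with bounds uniform in $n$ is a refinement the paper leaves implicit, but it does not alter the argument.
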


\begin{proof}
	Let $u\in C^1[0, \varrho_{\Omega})$ and take $0<\rho<\varrho_{\Omega}$. Since $u\in C^1[0,\varrho]$, by the Weierstrass approximation theorem there exists a sequence $\{p_n\}_{n=0}^{\infty}\subset \mathcal{P}[0,\rho_{\Omega}]$ such that $p_n^{(k)}\overset{[0,\rho]}{\rightrightarrows} h^{(k)}$ when $n\rightarrow \infty$, $k=0,1$. Since $G_f, G_{\frac{1}{f}}\in C\left([0,\varrho_{\Omega}]\times [0,1]\right)$, we have that  $\mathbf{T}_{\frac{1}{f}}p_n\overset{[0,\rho]}{\rightrightarrows} \mathbf{T}_{\frac{1}{f}}u$ and $\mathbf{T}_f[sp'_n]\overset{[0,\rho]}{\rightrightarrows} \mathbf{T}_f[su']$, from where $\mathbf{T}_{\frac{1}{f}}u(r)=\widetilde{T}_2u(r)$ for all $r\in [0,\rho]$. Since $\rho$ was arbitrary, we obtain the equality in the whole segment $[0,\varrho_{\Omega})$.
\end{proof}

As a consequence of (\ref{theoremtransmutation2formulae}) we obtain the following transmutation relations.
\begin{proposition}
	The following equalities hold
	\begin{equation}\label{transmutationrelations2}
		\mathbf{D}_f\mathbf{T}_{\frac{1}{f}}u=\mathbf{T}\mathbf{D}_1u, \quad \mathbf{D}_{\frac{1}{f}}\mathbf{T}_fu=\mathbf{T}_{\frac{1}{f}}\mathbf{D}_1u, \quad \forall u\in C^1[0,\varrho_{\Omega}).
	\end{equation}
\end{proposition}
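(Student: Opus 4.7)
The plan is to prove the two identities in sequence: the first falls out of direct differentiation of the representation (\ref{transmutation2formulac1}), while the second reduces to the first via the factorizations (\ref{factorizations}) combined with Theorem \ref{thtransmutationradial1}, followed by a short boundary check at $r=0$.

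\textbf{First identity.} Given $u\in C^1[0,\varrho_{\Omega})$, Theorem \ref{theoremtransmutation2formulae} says $f(r)\mathbf{T}_{\frac{1}{f}}u(r)=\int_0^r\frac{f(s)\mathbf{T}_f[su'(s)]}{s}\,ds+u(0)$. Differentiating in $r$ via the fundamental theorem of calculus and then multiplying by $\frac{r}{f(r)}$, the factor $\frac{1}{s}\big|_{s=r}$ cancels and the operator $\mathbf{D}_f=\frac{r}{f}\frac{d}{dr}f$ produces exactly $\mathbf{T}_f[ru'(r)]=\mathbf{T}_f\mathbf{D}_1 u$, which is the first equality in (\ref{transmutationrelations2}).

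\textbf{Second identity.} I would first treat $u\in C^2$. Because $\widehat{\mathbf{L}}_1=\mathbf{D}_1^{2}$ and $\widehat{\mathbf{L}}_f=\mathbf{D}_f\mathbf{D}_{\frac{1}{f}}$, Theorem \ref{thtransmutationradial1} gives
\[
\mathbf{D}_f\mathbf{D}_{\frac{1}{f}}\mathbf{T}_f u=\widehat{\mathbf{L}}_f\mathbf{T}_f u=\mathbf{T}_f\widehat{\mathbf{L}}_1 u=\mathbf{T}_f\mathbf{D}_1(\mathbf{D}_1 u),
\]
and applying the first identity with $\mathbf{D}_1 u\in C^1$ in place of $u$ rewrites the right-hand side as $\mathbf{D}_f\mathbf{T}_{\frac{1}{f}}\mathbf{D}_1 u$. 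Since $\mathbf{D}_f g=\frac{r}{f}(fg)'$ on $(0,\varrho_{\Omega})$, its kernel is $\mathbb{C}\cdot\frac{1}{f}$; therefore $\mathbf{D}_{\frac{1}{f}}\mathbf{T}_f u-\mathbf{T}_{\frac{1}{f}}\mathbf{D}_1 u=\frac{c}{f}$ for some constant $c$. Evaluating at $r=0$, the right summand vanishes because $\mathbf{D}_1 u(0)=0$ and the Gilbert--Atkinson kernel satisfies $G^{\frac{1}{f}}(0,t)=0$ by (\ref{initialconditionskernelG}); the left summand $\mathbf{D}_{\frac{1}{f}}\mathbf{T}_f u=rf\bigl(\mathbf{T}_f u/f\bigr)'$ vanishes because the prefactor $r$ multiplies a derivative that is bounded near $0$ (since $\mathbf{T}_f u/f$ is $C^1$ at the origin by the conditions (\ref{initialconditionsf}) on $f$). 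With $f(0)=1$ this forces $c=0$. To pass from $C^2$ to $C^1$, I would approximate $u$ on each $[0,\rho]$ with $\rho<\varrho_{\Omega}$ by polynomials $p_n$ with $p_n\to u$ and $p_n'\to u'$ uniformly (Weierstrass); differentiating under the integral in the representation (\ref{transmutation1}) yields $(\mathbf{T}_f p_n)'\to (\mathbf{T}_f u)'$ uniformly on $[0,\rho]$, which gives $\mathbf{D}_{\frac{1}{f}}\mathbf{T}_f p_n\to \mathbf{D}_{\frac{1}{f}}\mathbf{T}_f u$ and $\mathbf{T}_{\frac{1}{f}}\mathbf{D}_1 p_n\to \mathbf{T}_{\frac{1}{f}}\mathbf{D}_1 u$, so the identity for polynomials passes to the limit.

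The main obstacle is the boundary evaluation at $r=0$ in the kernel-of-$\mathbf{D}_f$ step: that is the one place where one genuinely needs the normalization (\ref{initialconditionsf}) together with $G^f(0,t)=G^{\frac{1}{f}}(0,t)=0$; everything else is algebraic manipulation of (\ref{transmutation2firstform}) and the intertwining property for $\widehat{\mathbf{L}}_f$.
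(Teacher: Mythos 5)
Your argument is correct, and its first half is exactly the paper's (implicit) proof: the paper obtains (\ref{transmutationrelations2}) ``as a consequence of'' Theorem \ref{theoremtransmutation2formulae}, i.e.\ by differentiating the representation $f\,\mathbf{T}_{\frac{1}{f}}u=\int_0^r\frac{f(s)\mathbf{T}_f[su'(s)]}{s}ds+u(0)$ and multiplying by $\frac{r}{f}$, which is precisely your derivation of $\mathbf{D}_f\mathbf{T}_{\frac{1}{f}}u=\mathbf{T}_f\mathbf{D}_1u$ (you also correctly reinstate the factor $\frac{1}{s}$ that is missing, as a typo, in the displayed formula of Theorem \ref{theoremtransmutation2formulae}). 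For the second identity the intended route in the paper is shorter: since $\frac{1}{f}$ satisfies the same normalization (\ref{initialconditionsf}) and $q_f$ is the Darboux transform of $q_{\frac{1}{f}}$, Theorem \ref{theoremtransmutation2formulae} applies with $f$ and $\frac{1}{f}$ interchanged, giving $\mathbf{T}_{f}u=f\bigl(\int_0^r\frac{\mathbf{T}_{\frac{1}{f}}[su'(s)]}{sf(s)}ds+u(0)\bigr)$, and one differentiates again. You instead derive $\mathbf{D}_{\frac{1}{f}}\mathbf{T}_fu=\mathbf{T}_{\frac{1}{f}}\mathbf{D}_1u$ from the radial intertwining of Theorem \ref{thtransmutationradial1} together with the factorizations (\ref{factorizations}) and the already-proved first identity, then kill the kernel element $\frac{c}{f}$ of $\mathbf{D}_f$ by the boundary evaluation at $r=0$ (using (\ref{initialconditionsf}) and $G^f(0,t)=G^{\frac{1}{f}}(0,t)=0$ from (\ref{initialconditionskernelG})), and pass from polynomials/$C^2$ to $C^1[0,\varrho_{\Omega})$ by Weierstrass approximation on $[0,\rho]$, using that $G^f\in C^2$ lets you differentiate (\ref{transmutation1}) under the integral. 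This is valid — the intertwining relation is quoted from \cite{mine1}, so there is no circularity, and your limit passage and boundary check are sound (in fact the weaker condition $rf'(r)\to 0$ already suffices for the vanishing of $\mathbf{D}_{\frac{1}{f}}\mathbf{T}_fu$ at $r=0$, so you do not really need $\mathbf{T}_fu/f\in C^1$ at the origin) — but it costs an extra density argument and ODE-kernel analysis that the symmetric form of Theorem \ref{theoremtransmutation2formulae} renders unnecessary; on the other hand, your route has the merit of not requiring one to re-run the proof of Theorem \ref{theoremtransmutation2formulae} with the roles of $f$ and $\frac{1}{f}$ exchanged.
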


\begin{remark}
\begin{itemize}
	\item[(i)] Equality (\ref{transmutation2formulac1}) and the transmutation relations (\ref{transmutationrelations2}) are valid for $u\in C^1[0,\varrho_{\Omega}]$.
	\item[(ii)] If $u\in C^2[0,\varrho_{\Omega})$, then
	\[
	\widehat{\mathbf{L}}_f\mathbf{T}_fu=\mathbf{D}_f\mathbf{D}_{\frac{1}{f}}\mathbf{T}_fu=\mathbf{D}_f\mathbf{T}_{\frac{1}{f}}\mathbf{D}_1u= \mathbf{T}_f\mathbf{D}_1^2u=\mathbf{T}_f\widehat{\mathbf{L}}_1u.
	\] 
\end{itemize}		
\end{remark}

\subsection{Transmutation operators for the radial Vekua equation}
\begin{theorem}
	For all $W\in C^1(\Omega; \mathbb{B})$, the following equalities hold
\begin{eqnarray}
		r\left(\overline{\boldsymbol{\partial}}-\frac{\overline{\boldsymbol{\partial}}f}{f}C_{\mathbb{B}}\right)\mathcal{T}_fW & =&  \mathcal{T}_{\frac{1}{f}}r\overline{\boldsymbol{\partial}} W, \label{transmutationproertyvekua} \\
		 r\left(\boldsymbol{\partial}-\frac{\boldsymbol{\partial}f}{f}C_{\mathbb{B}}\right)\mathcal{T}_fW & = &  \mathcal{T}_{\frac{1}{f}}r\boldsymbol{\partial} W.
\end{eqnarray}
\end{theorem}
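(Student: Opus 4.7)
The approach is a direct computation in polar coordinates, reducing everything to the first-derivative transmutation relations $\mathbf{D}_{\frac{1}{f}}\mathbf{T}_f=\mathbf{T}_{\frac{1}{f}}\mathbf{D}_1$ and $\mathbf{D}_f\mathbf{T}_{\frac{1}{f}}=\mathbf{T}_f\mathbf{D}_1$ already established. I would write $W=u+\mathbf{j}v$ with complex-valued $u,v\in C^1(\Omega)$, set $U:=\mathbf{T}_f u$ and $V:=\mathbf{T}_{\frac{1}{f}}v$, so that $\mathcal{T}_f W=U+\mathbf{j}V$. The key observation is the polar identity $r\overline{\boldsymbol{\partial}}=\tfrac{e^{\mathbf{j}\theta}}{2}(\mathbf{D}_1+\mathbf{j}\partial_\theta)$ (with $\mathbf{D}_1=r\partial_r$), which puts both sides of the claimed identity over the common prefactor $e^{\mathbf{j}\theta}/2$.

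For the left-hand side I first expand $r\overline{\boldsymbol{\partial}}\mathcal{T}_f W=\tfrac{e^{\mathbf{j}\theta}}{2}[(\mathbf{D}_1U-\partial_\theta V)+\mathbf{j}(\mathbf{D}_1V+\partial_\theta U)]$ and then subtract $r\alpha_f\overline{\mathcal{T}_f W}=\tfrac{e^{\mathbf{j}\theta}}{2}\tfrac{rf'}{f}(U-\mathbf{j}V)$, using $\alpha_f=\tfrac{e^{\mathbf{j}\theta}f'}{2f}$ and $\overline{\mathcal{T}_f W}=U-\mathbf{j}V$. The combinations $\mathbf{D}_1-rf'/f$ and $\mathbf{D}_1+rf'/f$ are precisely the Darboux operators $\mathbf{D}_{\frac{1}{f}}$ and $\mathbf{D}_f$, producing
\[
r\Bigl(\overline{\boldsymbol{\partial}}-\tfrac{\overline{\boldsymbol{\partial}}f}{f}C_{\mathbb{B}}\Bigr)\mathcal{T}_f W=\tfrac{e^{\mathbf{j}\theta}}{2}\bigl[(\mathbf{D}_{\frac{1}{f}}U-\partial_\theta V)+\mathbf{j}(\mathbf{D}_f V+\partial_\theta U)\bigr].
\]
Next I would apply the transmutation identities in the radial variable, treating $\theta$ as a parameter; this is legitimate because the kernels $G^f, G^{1/f}$ are $\theta$-independent, so $\mathbf{T}_f, \mathbf{T}_{\frac{1}{f}}$ commute with $\partial_\theta$. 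This gives $\mathbf{D}_{\frac{1}{f}}U=\mathbf{T}_{\frac{1}{f}}\mathbf{D}_1u$, $\mathbf{D}_f V=\mathbf{T}_f\mathbf{D}_1v$, $\partial_\theta U=\mathbf{T}_f\partial_\theta u$, $\partial_\theta V=\mathbf{T}_{\frac{1}{f}}\partial_\theta v$, and the bracket rewrites as $\mathbf{T}_{\frac{1}{f}}(\mathbf{D}_1u-\partial_\theta v)+\mathbf{j}\mathbf{T}_f(\mathbf{D}_1v+\partial_\theta u)$.

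The right-hand side is treated by the analogous expansion $r\overline{\boldsymbol{\partial}}W=\tfrac{e^{\mathbf{j}\theta}}{2}[(\mathbf{D}_1u-\partial_\theta v)+\mathbf{j}(\mathbf{D}_1v+\partial_\theta u)]$ followed by application of $\mathcal{T}_{\frac{1}{f}}=\mathbf{T}_{\frac{1}{f}}\operatorname{Sc}+\mathbf{j}\mathbf{T}_f\operatorname{Vec}$, after splitting the bicomplex prefactor $e^{\mathbf{j}\theta}=\cos\theta+\mathbf{j}\sin\theta$. Matching the two sides then becomes a termwise identification based on the complex linearity of $\mathbf{T}_f, \mathbf{T}_{\frac{1}{f}}$ and their commutation with multiplication by $\theta$-only functions. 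The main obstacle is the interaction of the bicomplex phase $e^{\mathbf{j}\theta}$ with $\mathcal{T}_{\frac{1}{f}}$, which assigns distinct transmutations to the scalar and vectorial parts; the idempotent decomposition $e^{\mathbf{j}\theta}=\mathbf{p}^+e^{-i\theta}+\mathbf{p}^-e^{i\theta}$ provides the cleanest bookkeeping by diagonalising bicomplex multiplication into two independent complex identities. The second identity, for $\boldsymbol{\partial}$ in place of $\overline{\boldsymbol{\partial}}$, follows from the symmetric polar formula $\boldsymbol{\partial}=\tfrac{e^{-\mathbf{j}\theta}}{2}(\partial_r-\mathbf{j}\partial_\theta/r)$ together with $\boldsymbol{\partial}f/f=\tfrac{e^{-\mathbf{j}\theta}f'}{2f}$ and the same transmutation relations applied in the opposite rotation.
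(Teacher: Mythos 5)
Your reduction of the left-hand side is correct and is essentially the paper's own computation: factoring out $\frac{e^{\mathbf{j}\theta}}{2}$, recognizing $\mathbf{D}_{\frac{1}{f}}$ and $\mathbf{D}_f$ in the scalar and vectorial slots, and applying $\mathbf{D}_{\frac{1}{f}}\mathbf{T}_f=\mathbf{T}_{\frac{1}{f}}\mathbf{D}_1$, $\mathbf{D}_f\mathbf{T}_{\frac{1}{f}}=\mathbf{T}_f\mathbf{D}_1$ together with commutation with $\partial_\theta$ yields
\[
r\Bigl(\overline{\boldsymbol{\partial}}-\tfrac{\overline{\boldsymbol{\partial}}f}{f}C_{\mathbb{B}}\Bigr)\mathcal{T}_f W=\tfrac{e^{\mathbf{j}\theta}}{2}\bigl[\mathbf{T}_{\frac{1}{f}}(\mathbf{D}_1u-v_\theta)+\mathbf{j}\,\mathbf{T}_f(\mathbf{D}_1v+u_\theta)\bigr].
\]
The gap is in the final matching step. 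You correctly identify the obstacle (``the interaction of the bicomplex phase $e^{\mathbf{j}\theta}$ with $\mathcal{T}_{\frac{1}{f}}$''), but the proposed remedy does not work. What is needed is $\mathcal{T}_{\frac{1}{f}}\bigl[\frac{e^{\mathbf{j}\theta}}{2}\Psi\bigr]=\frac{e^{\mathbf{j}\theta}}{2}\mathcal{T}_{\frac{1}{f}}\Psi$ with $\Psi=(\mathbf{D}_1u-v_\theta)+\mathbf{j}(\mathbf{D}_1v+u_\theta)$, whereas the analogue of (\ref{noBlinearity}) gives $\mathcal{T}_{\frac{1}{f}}[A\Psi]=\operatorname{Sc}(A)\,\mathcal{T}_{\frac{1}{f}}\Psi+\mathbf{j}\operatorname{Vec}(A)\,\mathcal{T}_f\Psi$, which differs from $A\,\mathcal{T}_{\frac{1}{f}}\Psi$ by $\mathbf{j}\operatorname{Vec}(A)\,(\mathcal{T}_f-\mathcal{T}_{\frac{1}{f}})\Psi$; for $A=\frac{e^{\mathbf{j}\theta}}{2}$ this is $\frac{\mathbf{j}\sin\theta}{2}(\mathcal{T}_f-\mathcal{T}_{\frac{1}{f}})\Psi$ and does not vanish in general. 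The idempotent decomposition cannot repair this: $\mathbf{p}^{\pm}$ diagonalize bicomplex \emph{multiplication}, but $\mathcal{T}_{\frac{1}{f}}=\mathbf{T}_{\frac{1}{f}}\operatorname{Sc}+\mathbf{j}\mathbf{T}_f\operatorname{Vec}$ is only $\mathbb{C}$-linear and is built from $\operatorname{Sc}$ and $\operatorname{Vec}$, which mix the $\mathbf{p}^{+}$ and $\mathbf{p}^{-}$ components; the problem does not split into two independent complex identities. A concrete test: for $W=\widehat{z}^{\dagger}=x-\mathbf{j}y$ one has $r\overline{\boldsymbol{\partial}}W=r$, so the right-hand side is $\mathbf{T}_{\frac{1}{f}}[r]$, while your (correct) left-hand side evaluates to $\cos^2\theta\,\mathbf{T}_{\frac{1}{f}}[r]+\sin^2\theta\,\mathbf{T}_f[r]+\mathbf{j}\sin\theta\cos\theta\bigl(\mathbf{T}_{\frac{1}{f}}[r]-\mathbf{T}_f[r]\bigr)$; these agree only if $\mathbf{T}_f[r]=\mathbf{T}_{\frac{1}{f}}[r]$.

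In fairness, the paper's own proof elides exactly the same point, identifying $\mathbf{T}_{\frac{1}{f}}(\mathbf{D}_1u-v_\theta)$ with $\mathbf{T}_{\frac{1}{f}}\operatorname{Sc}(r\overline{\boldsymbol{\partial}}W)$ as if the phase were absent. What your steps actually establish, and what the rest of the paper uses, is the phase-stripped relation
\[
\Bigl(\mathbf{D}_1-\tfrac{rf'}{f}C_{\mathbb{B}}+\mathbf{j}\partial_\theta\Bigr)\mathcal{T}_fW=\mathcal{T}_{\frac{1}{f}}\Bigl(\mathbf{D}_1+\mathbf{j}\partial_\theta\Bigr)W,
\]
i.e.\ the transmutation relation for the reduced radial operators of (\ref{radialVekuaeq}); since $e^{\mathbf{j}\theta}$ is invertible, both sides of the stated identity still vanish simultaneously, so the key consequence $\operatorname{V}_f(\Omega;\mathbb{B})=\mathcal{T}_f(\operatorname{Hol}(\Omega;\mathbb{B}))$ is unaffected. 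To make your argument complete, prove the displayed reduced identity (your expansion already does) and either present the theorem in that form or track the commutator term $\frac{\mathbf{j}\sin\theta}{2}(\mathcal{T}_f-\mathcal{T}_{\frac{1}{f}})\Psi$ explicitly rather than asserting that it disappears.
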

\begin{proof}
	We prove the first equality (the proof of the second is analogous). Let $W\in C^1(\Omega; \mathbb{B})$ and $u=\operatorname{Sc}W$, $v=\operatorname{Vec}W$. By (\ref{radialVekuaeq}) and (\ref{transmutationrelations2}) we have
	\begin{align*}
			r\left(\overline{\boldsymbol{\partial}}-\frac{\overline{\boldsymbol{\partial}}f}{f}C_{\mathbb{B}}\right)\mathcal{T}_fW & = \frac{e^{\mathbf{j}\theta}}{2}\left(r\frac{\partial}{\partial r}-\frac{rf'(r)}{f(r)}C_{\mathbb{B}}+\mathbf{j}\frac{\partial}{\partial \theta}\right)\left(\mathbf{T}_fu+\mathbf{j}\mathbf{T}_{\frac{1}{f}}v\right) \\
			& = \frac{e^{\mathbf{j}\theta}}{2}\left(\mathbf{D}_{\frac{1}{f}}\mathbf{T}_fu+\mathbf{j}\frac{\partial}{\partial \theta}\mathbf{T}_fu+\mathbf{j}\left(r\frac{\partial}{\partial r}\mathbf{T}_{\frac{1}{f}}v+\frac{rf'(r)}{f(r)}\mathbf{T}_{\frac{1}{f}}v\right)-\frac{\partial}{\partial \theta}\mathbf{T}_{\frac{1}{f}}v\right)\\
			&= \frac{e^{\mathbf{j}\theta}}{2}\left(\mathbf{T}_{\frac{1}{f}}\mathbf{D}_1u+\mathbf{j}\mathbf{T}_{f}u_{\theta}+\mathbf{j}\mathbf{D}_f\mathbf{T}_{\frac{1}{f}}v-\mathbf{T}_{\frac{1}{f}}v_{\theta}\right)\\
			&=\frac{e^{\mathbf{j}\theta}}{2}\left(\mathbf{T}_{\frac{1}{f}}\left(\mathbf{D}_1u-v_{\theta}\right)+\mathbf{j}\mathbf{T}_f\left(\mathbf{D}_1u+v_{\theta}\right)  \right)=\mathbf{T}_{\frac{1}{f}}\operatorname{Sc}r\overline{\boldsymbol{\partial}}W+\mathbf{j}\mathbf{T}_f\operatorname{Vec}r\overline{\boldsymbol{\partial}}W\\
			&= \mathcal{T}_{\frac{1}{f}}r\overline{\boldsymbol{\partial}}W.
	\end{align*}
\end{proof}
\begin{proposition}\label{proptransmuteholo}
	The following equality is valid 
	\begin{equation}
		\operatorname{V}_f(\Omega;\mathbb{B})=\mathcal{T}_f\left(\operatorname{Hol}(\Omega; \mathbb{B})\right).
	\end{equation}
\end{proposition}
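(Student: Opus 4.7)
The plan is to exploit the transmutation identity
\[
r\left(\overline{\boldsymbol{\partial}}-\frac{\overline{\boldsymbol{\partial}}f}{f}C_{\mathbb{B}}\right)\mathcal{T}_fH = \mathcal{T}_{\frac{1}{f}}\,r\,\overline{\boldsymbol{\partial}}H
\]
established in the previous theorem, together with the scalar Gilbert--Atkinson representation (Theorem \ref{thereomtransformationoperator}(ii)) applied componentwise to the scalar and vectorial parts of a Vekua solution. The key observation is that the identity lets us transport the condition ``$H$ is $\mathbb{B}$-holomorphic'' to ``$\mathcal{T}_fH$ solves the main Vekua equation'' and, thanks to the invertibility of $\mathcal{T}_{\frac{1}{f}}$ given by Proposition \ref{propertiesopt}, back again.

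For the inclusion $\mathcal{T}_f(\operatorname{Hol}(\Omega;\mathbb{B}))\subseteq \operatorname{V}_f(\Omega;\mathbb{B})$, take $H\in \operatorname{Hol}(\Omega;\mathbb{B})$. Then $\overline{\boldsymbol{\partial}}H\equiv 0$ on $\Omega$, so the right-hand side of the transmutation identity vanishes, yielding
\[
r\left(\overline{\boldsymbol{\partial}}-\frac{\overline{\boldsymbol{\partial}}f}{f}C_{\mathbb{B}}\right)\mathcal{T}_fH = 0 \quad \text{on } \Omega.
\]
By Remark \ref{remarktransmutationc2}, $\mathcal{T}_fH\in C^2(\Omega;\mathbb{B})$, so the expression $(\overline{\boldsymbol{\partial}}-\frac{\overline{\boldsymbol{\partial}}f}{f}C_{\mathbb{B}})\mathcal{T}_fH$ is continuous on $\Omega$; it vanishes on the dense open set $\Omega\setminus\{0\}$ and hence on all of $\Omega$. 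Thus $\mathcal{T}_fH\in \operatorname{V}_f(\Omega;\mathbb{B})$.

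For the converse inclusion, let $W\in \operatorname{V}_f(\Omega;\mathbb{B})$, and write $u=\operatorname{Sc}W$, $v=\operatorname{Vec}W$. By Theorem \ref{theoremvekuaconjugate}, $u\in \operatorname{Sol}^{\mathbf{S}_f}(\Omega)$ and $v\in \operatorname{Sol}^{\mathbf{S}_{\frac{1}{f}}}(\Omega)$, so Theorem \ref{thereomtransformationoperator}(ii) (applied separately to each scalar component, real and imaginary) yields unique $h_1,h_2\in \operatorname{Har}(\Omega)$ (complex-valued) with $u=\mathbf{T}_fh_1$ and $v=\mathbf{T}_{\frac{1}{f}}h_2$. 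Set $H:=h_1+\mathbf{j}h_2\in C^{\infty}(\Omega;\mathbb{B})$; by construction $\mathcal{T}_fH=W$. It remains to show $H\in \operatorname{Hol}(\Omega;\mathbb{B})$. Applying the transmutation identity to this $H$ and using that $W$ solves the Vekua equation, the left-hand side vanishes, so
\[
\mathcal{T}_{\frac{1}{f}}\,r\,\overline{\boldsymbol{\partial}}H = 0 \quad \text{on } \Omega.
\]
Proposition \ref{propertiesopt} guarantees $\mathcal{T}_{\frac{1}{f}}$ is invertible on $C(\Omega;\mathbb{B})$, hence $r\,\overline{\boldsymbol{\partial}}H\equiv 0$ and, by the same continuity/density argument as above, $\overline{\boldsymbol{\partial}}H=0$ on $\Omega$, i.e., $H\in \operatorname{Hol}(\Omega;\mathbb{B})$.

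The main delicate point is the passage from ``the identity holds multiplied by $r$'' to ``the identity itself holds at $r=0$''. This is handled purely by continuity, since both $\mathcal{T}_fH$ and $H$ lie in $C^1(\Omega;\mathbb{B})$, but it is worth highlighting because the transmutation identity produced by the previous theorem carries the factor $r$ intrinsically (reflecting the radial Polya factorization). Everything else reduces to invertibility of $\mathcal{T}_f$, $\mathcal{T}_{\frac{1}{f}}$, and the scalar Gilbert--Atkinson transmutation.
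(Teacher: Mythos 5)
Your proof is correct and follows essentially the same route as the paper: both directions rest on the transmutation identity $r\bigl(\overline{\boldsymbol{\partial}}-\frac{\overline{\boldsymbol{\partial}}f}{f}C_{\mathbb{B}}\bigr)\mathcal{T}_f=\mathcal{T}_{\frac{1}{f}}r\overline{\boldsymbol{\partial}}$ combined with the invertibility of $\mathcal{T}_{\frac{1}{f}}$. The only cosmetic difference is that for surjectivity you construct the preimage componentwise via Theorem \ref{thereomtransformationoperator}(ii), whereas the paper invokes Remark \ref{remarktransmutationc2} to obtain a $C^2$ preimage directly; your explicit handling of the factor $r$ at the origin is a detail the paper leaves implicit.
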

\begin{proof}
	By the transmutation property (\ref{transmutationproertyvekua}), it is clear that $\mathcal{T}_fW\in \operatorname{V}_f(\Omega)$ if $W\in \operatorname{Hol}(\Omega;\mathbb{B})$. Reciprocally, if $W\in \operatorname{V}_f(\Omega)$, by Remark (\ref{remarktransmutationc2}) there exists $V\in C^2(\Omega; \mathbb{B})$ such that $W=\mathcal{T}_fV$. By (\ref{transmutationproertyvekua}) we obtain
	\[
	0=r\left(\overline{\boldsymbol{\partial}}-\frac{\overline{\boldsymbol{\partial}}f}{f}C_{\mathbb{B}}\right)\mathcal{T}_fV= \mathcal{T}_{\frac{1}{f}}r\overline{\boldsymbol{\partial}} V.
	\]
	Since $\mathcal{T}_{\frac{1}{f}}$ is a bijection, hence $\overline{\boldsymbol{\partial}}V=0$ in $\Omega$, that is, $V\in \operatorname{Hol}(\Omega; \mathbb{B})$.
\end{proof}

Denote the harmonic Bergman space by
\[
b_2(\Omega)=\{h\in \operatorname{Har}(\Omega)\, |\, h\in L_2(\Omega)\},
\]
and the Bergman space of solutions of $\mathbf{S}_fu=0$ as
\begin{equation}\label{bergmanradialschr}
	\operatorname{Sol}_2^{\mathbf{S}_f}(\Omega):= \{u\in \operatorname{Sol}_2^{\mathbf{S}_f}\, |\, u\in L_2(\Omega)\}.
\end{equation}
Since $\operatorname{Sol}^{\mathbf{S}_f}(\Omega)$ is closed in the Fr\'echet space $C(\Omega)$ \cite[Remark 13]{mine1}, then $\operatorname{Sol}_2^{\mathbf{S}_f}(\Omega)$ is a Hilbert space with reproducing kernel \cite[Prop. 2.3]{mine2}.
\begin{proposition}\label{propinvertibilityinvekua}
	The operator $\mathcal{T}_f:\mathcal{A}^2(\Omega; \mathbb{B})\rightarrow \mathcal{A}_f^2(\Omega; \mathbb{B})$ is bounded and invertible with bounded inverse. The same properties are valid for $\mathcal{T}_{\frac{1}{f}}$.
\end{proposition}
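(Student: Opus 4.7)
The heart of the argument is the $L_2$-boundedness of the scalar Gilbert--Atkinson operator $\mathbf{T}_f$ on the harmonic Bergman space $b_2(\Omega;\mathbb{C})$; once this is in hand for both $\mathbf{T}_f$ and $\mathbf{T}_{\frac{1}{f}}$, the bicomplex statement follows at once. Indeed, for $W\in\mathcal{A}^2(\Omega;\mathbb{B})$, Remark \ref{remarkdecompositionholomorphic}(i) shows that $u:=\operatorname{Sc}W$ and $v:=\operatorname{Vec}W$ are complex-valued harmonic $L_2$-functions (as combinations of the (anti-)holomorphic parts $W^{\pm}$), and (\ref{normsecondform}) gives $\|W\|_{L_2(\Omega;\mathbb{B})}^2=\|u\|_{L_2}^2+\|v\|_{L_2}^2$. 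Combined with Proposition \ref{proptransmuteholo}, the scalar $L_2$-bounds will therefore yield $\mathcal{T}_f:\mathcal{A}^2(\Omega;\mathbb{B})\to\mathcal{A}_f^2(\Omega;\mathbb{B})$ bounded, and similarly for $\mathcal{T}_{\frac{1}{f}}$.

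To prove the scalar $L_2$-bound I would work directly from (\ref{transmutation1}) with $M=\max|G^f|$. A naive application of Minkowski's inequality combined with the dilation bound $\|h(\sigma^2\cdot)\|_{L_2(\Omega)}\leqslant\sigma^{-2}\|h\|_{L_2(\Omega)}$ (which uses $\sigma^2\Omega\subseteq\Omega$ by star-shapedness) produces the divergent integral $\int_0^1\sigma^{-1}d\sigma$. To circumvent this, I would fix $\delta>0$ with $\overline{B_\delta^{\mathbb{C}}(0)}\subset\Omega$, choose $\sigma_0\in(0,\sqrt{\delta/\varrho_\Omega})$, and split $\int_0^1\sigma\, G^f(r,1-\sigma^2)\,h(\sigma^2z)\,d\sigma$ as $\int_0^{\sigma_0}+\int_{\sigma_0}^1$. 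On $[0,\sigma_0]$ the argument $\sigma^2z$ stays in the fixed compact $\overline{B_\delta^{\mathbb{C}}(0)}\subset\Omega$, so the harmonic-Bergman reproducing property (the classical analogue of Proposition \ref{propbergmankernelpseudo}) gives $|h(\sigma^2z)|\leqslant C_\delta\|h\|_{L_2(\Omega)}$ uniformly, yielding a pointwise bound that integrates to an $L_2$-bound. On $[\sigma_0,1]$ Minkowski together with the dilation estimate reduces to the convergent $\int_{\sigma_0}^1\sigma^{-1}d\sigma=\log(1/\sigma_0)$. The same analysis applies verbatim to $\mathbf{T}_{\frac{1}{f}}$.

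Injectivity of $\mathcal{T}_f$ on $\mathcal{A}^2(\Omega;\mathbb{B})$ is inherited from the bijection $\operatorname{Hol}(\Omega;\mathbb{B})\to\operatorname{V}_f(\Omega;\mathbb{B})$ of Proposition \ref{proptransmuteholo} together with the explicit inverse (\ref{inverseofT}). For surjectivity onto $\mathcal{A}_f^2(\Omega;\mathbb{B})$ with bounded inverse, the cleanest route is to rerun the split-Minkowski estimate on the inverse Gilbert--Atkinson operator $\mathbf{T}_f^{-1}$, which has an integral representation of the same shape with a $C^2$ kernel (see \cite{gilbertatkinson,mine1}). Alternatively, once the bounded bijection $\mathcal{T}_f:\mathcal{A}^2(\Omega;\mathbb{B})\to\mathcal{A}_f^2(\Omega;\mathbb{B})$ has been set up, the open mapping theorem supplies a bounded inverse automatically; a closed-graph argument also works, since $L_2$-convergence in either Bergman space implies uniform convergence on compact subsets (Proposition \ref{propbergmankernelpseudo} and its harmonic analogue), while Proposition \ref{propertiesopt} gives continuity of $\mathcal{T}_f$ and $\mathcal{T}_f^{-1}$ in $C(\Omega;\mathbb{B})$.

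The main obstacle is precisely the logarithmic divergence in the naive $L_2$-estimate of $\mathbf{T}_f$ coming from the dilation factor $\sigma^{-2}$. The splitting at $\sigma_0$ is essential: it exploits harmonic interior regularity of $L_2$-functions at the center of star-shapedness, and would fail if the origin were close to $\partial\Omega$.
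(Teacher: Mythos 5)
Your proposal is correct in outline and its skeleton coincides with the paper's: decompose into scalar/vector parts, use the scalar operators $\mathbf{T}_f,\mathbf{T}_{\frac{1}{f}}$ and their inverses on $b_2(\Omega)$ and $\operatorname{Sol}_2^{\mathbf{S}_f}(\Omega)$, $\operatorname{Sol}_2^{\mathbf{S}_{1/f}}(\Omega)$, identify the preimage as $\mathbb{B}$-holomorphic via Proposition \ref{proptransmuteholo}, and finish with the open mapping theorem. The genuine difference is that the paper treats the scalar $L_2$-bounds (for the operators \emph{and} their inverses) as known, citing \cite[Sec.~3]{mine1}, while you sketch a self-contained proof via the splitting of $\int_0^1$ at $\sigma_0$, interior pointwise estimates near the star-center, and Minkowski plus the dilation bound $\|h(\sigma^2\cdot)\|_{L_2(\Omega)}\leqslant\sigma^{-2}\|h\|_{L_2(\Omega)}$ on $[\sigma_0,1]$; that argument is sound and buys independence from the external reference. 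Two points deserve care. First, your ``cleanest route'' for surjectivity rests on the claim that $\mathbf{T}_f^{-1}$ admits an integral representation of the same shape with a regular kernel; this is precisely the nontrivial ingredient one would otherwise cite from \cite{mine1}, and when you rerun the estimate there the inputs are no longer harmonic but $L_2$-solutions of $\mathbf{S}_fu=0$ (resp.\ $\mathbf{S}_{\frac{1}{f}}v=0$), obtained from $W\in\mathcal{A}_f^2(\Omega;\mathbb{B})$ via Theorem \ref{theoremvekuaconjugate}, so the interior bound on $[0,\sigma_0]$ must come from the elliptic estimates behind Proposition \ref{propbergmankernelpseudo} rather than the harmonic mean-value property — exactly as the paper implicitly does. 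Second, your ``alternative'' via the open mapping or closed graph theorem cannot by itself replace that step: both require the map to be onto $\mathcal{A}_f^2(\Omega;\mathbb{B})$, i.e.\ that the holomorphic preimage supplied by Proposition \ref{proptransmuteholo} is square integrable, and this is available only through the $L_2$-boundedness of the scalar inverses (proved or cited); you present it correctly as a way to upgrade an already established bounded bijection, which is how the paper uses it as well.
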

\begin{proof} Since $\mathbf{T}_f, \mathbf{T}_{\frac{1}{f}}\in \mathcal{B}\left(b_2(\Omega), L_2(\Omega)\right)$ with bounded inverses \cite[Sec. 3]{mine1}, then $\mathcal{T}_f\in \mathcal{B}\left(\mathcal{A}_2(\Omega; \mathbb{B}), L_2(\Omega; \mathbb{B})\right)$. If $W\in \mathcal{A}_f^2(\Omega; \mathbb{B})$, by Theorem \ref{theoremvekuaconjugate}, $\operatorname{Sc}W\in \operatorname{Sol}_2^{\mathbf{S}_f}(\Omega)$ and $\operatorname{Vec}W\in \operatorname{Sol}_2^{\mathbf{S}_{\frac{1}{f}}}(\Omega)$. Thus, $\mathbf{T}_f^{-1}\operatorname{Sc}W, \mathbf{T}_{\frac{1}{f}}^{-1}\operatorname{Vec}W\in b_2(\Omega)$ and then $\mathcal{T}_f^{-1}W\in L_2(\Omega; \mathbb{B})$. By Proposition \ref{proptransmuteholo}, $\mathcal{T}_f^{-1}W\in  \mathcal{A}_2(\Omega; \mathbb{B})$. Hence $\mathcal{T}_f: \mathcal{A}_2(\Omega; \mathbb{B})\rightarrow \mathcal{A}_2^f(\Omega; \mathbb{B})$ is a bounded bijection. By Theorem \ref{theorembergmanspacecomplete}, $\mathcal{A}_f^2(\Omega; \mathbb{B})$ is a complex Hilbert space, then it follows from the open mapping theorem \cite[Cor. 5.11]{folland} that  $\mathcal{T}_f^{-1}\in \mathcal{B}\left(\mathcal{A}_f^2(\Omega; \mathbb{B}), \mathcal{A}^2(\Omega; \mathbb{B})\right)$.
\end{proof}

\section{Complete system of solutions for the radial Vekua equation}
\subsection{The radial formal powers}
The following result allows us to know the action of the transmutation operator $\mathcal{T}_f$ over the set of polynomials in the variable $z$.
\begin{theorem}[\cite{mine1}, Th. 25]\label{theoremformalradialpowers1}
	For each $n\in \mathbb{N}_0$, the function $\mathbf{T}_f[z^n]$ is given by
	\begin{equation}
		 \mathbf{T}_f[r^ne^{ni\theta}]=\phi_f^{(n)}(r)r^ne^{in\theta},
	\end{equation}
where $\phi_f^{(n)}(r)=\frac{y_m^f(r)}{r^{m+\frac{1}{2}}}$ and $y_m^f(r)$ is the unique solution of the perturbed Bessel equation
\begin{equation}\label{perturbedbesselformalpowers1}
	-y_m''(r)+\frac{\left(m-\frac{1}{2}\right)\left(m+\frac{1}{2}\right)}{r^2}y_m(r)+q_f(r)y_m(r)=0, \quad 0<r\leqslant \varrho_{\Omega},
\end{equation}
satisfying the asymptotic conditions
\begin{equation}\label{asymptperturbedbesselformalpowers}
	y_m^f(r)\sim r^{m+\frac{1}{2}}, \quad \frac{d}{dr}y_m^f(r)\sim \left(m+\frac{1}{2}\right)r^{m-\frac{1}{2}}, \quad r\rightarrow 0^+
\end{equation}
\end{theorem}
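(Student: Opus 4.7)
The plan is to proceed by a direct computation using the explicit integral representation of $\mathbf{T}_{f}$ followed by an application of the transmutation identity of Theorem \ref{thtransmutationradial1}.

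First I would substitute $h(z)=r^{n}e^{in\theta}=z^{n}$ into the defining formula
\[
\mathbf{T}_{f}h(z)=h(z)+\int_{0}^{1}\sigma\,G^{f}(r,1-\sigma^{2})\,h(\sigma^{2}z)\,d\sigma.
\]
Since $h(\sigma^{2}z)=\sigma^{2n}z^{n}$, the factor $z^{n}=r^{n}e^{in\theta}$ pulls out of the integral, producing
\[
\mathbf{T}_{f}[r^{n}e^{in\theta}]=\phi_{f}^{(n)}(r)\,r^{n}e^{in\theta},\qquad \phi_{f}^{(n)}(r)=1+\int_{0}^{1}\sigma^{2n+1}G^{f}(r,1-\sigma^{2})\,d\sigma.
\]
This already establishes the separated form stated in the theorem; it only remains to identify the radial factor.

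Next, since $z^{n}$ is harmonic, Theorem \ref{thtransmutationradial1} gives $r^{2}\mathbf{S}_{f}\mathbf{T}_{f}[z^{n}]=\mathbf{T}_{f}[r^{2}\Delta z^{n}]=0$, so $\mathbf{T}_{f}[z^{n}]$ solves $\mathbf{S}_{f}u=0$ in $\Omega$. Writing $\mathbf{S}_{f}$ in polar coordinates and using the separated form, the $e^{in\theta}$ dependence extracts cleanly and the radial factor $u_{n}(r):=r^{n}\phi_{f}^{(n)}(r)$ must satisfy
\[
u_{n}''(r)+\frac{1}{r}u_{n}'(r)-\frac{n^{2}}{r^{2}}u_{n}(r)-q_{f}(r)\,u_{n}(r)=0,\qquad 0<r<\varrho_{\Omega}.
\]
To reach the perturbed Bessel form, I would perform the standard substitution $y_{n}^{f}(r):=r^{1/2}u_{n}(r)=r^{n+1/2}\phi_{f}^{(n)}(r)$. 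A routine computation (using $u=r^{-1/2}y$, $u'=r^{-1/2}y'-\tfrac{1}{2}r^{-3/2}y$, $u''=r^{-1/2}y''-r^{-3/2}y'+\tfrac{3}{4}r^{-5/2}y$) collapses the radial equation to
\[
-y_{n}''(r)+\frac{(n-\tfrac{1}{2})(n+\tfrac{1}{2})}{r^{2}}y_{n}(r)+q_{f}(r)y_{n}(r)=0,
\]
which is (\ref{perturbedbesselformalpowers1}) with $m=n$.

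Finally I would verify the asymptotic conditions (\ref{asymptperturbedbesselformalpowers}). Using the boundary conditions (\ref{initialconditionskernelG}) for $G^{f}$, one has $G^{f}(0,t)=0$, hence $\phi_{f}^{(n)}(0)=1$ and a direct differentiation under the integral shows $(\phi_{f}^{(n)})'(r)=O(r)$ as $r\to 0^{+}$, via the initial condition $G^{f}(r,0)=\int_{0}^{r}\tau q(\tau)\,d\tau$. This gives $u_{n}(r)\sim r^{n}$ and $u_{n}'(r)\sim n r^{n-1}$ as $r\to 0^{+}$, so $y_{n}^{f}(r)\sim r^{n+1/2}$ and $(y_{n}^{f})'(r)\sim (n+\tfrac{1}{2})r^{n-1/2}$. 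Uniqueness of such a regular solution at $r=0$ is guaranteed by the standard theory for the perturbed Bessel equation (cited in Remark \ref{remarkasymp}), completing the identification $\phi_{f}^{(n)}(r)=y_{n}^{f}(r)/r^{n+1/2}$.

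The only delicate point is the asymptotic step, since one must differentiate the integral $\int_{0}^{1}\sigma^{2n+1}G^{f}(r,1-\sigma^{2})d\sigma$ and use the compatibility of the two initial conditions on $G^{f}$; everything else is a clean application of the transmutation identity and a textbook variable change.
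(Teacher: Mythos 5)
Your argument is correct. Note first that the paper itself offers no proof of this statement: it is quoted verbatim from \cite{mine1} (Th.\ 25), so there is nothing internal to compare against; your reconstruction from the ingredients available in the paper is exactly the natural one. The three steps all check out: (a) substituting $h(\sigma^2 z)=\sigma^{2n}z^n$ into the integral representation of $\mathbf{T}_f$ cleanly separates the angular factor and yields $\phi_f^{(n)}(r)=1+\int_0^1\sigma^{2n+1}G^f(r,1-\sigma^2)\,d\sigma$; (b) the transmutation identity of Theorem \ref{thtransmutationradial1} with $\bigtriangleup z^n=0$ forces $\mathbf{S}_f\mathbf{T}_f[z^n]=0$, and the Liouville-type substitution $y=r^{1/2}u_n$ correctly turns $u_n''+\tfrac1r u_n'-\tfrac{n^2}{r^2}u_n-q_fu_n=0$ into (\ref{perturbedbesselformalpowers1}) with $m=n$, since $n^2-\tfrac14=(n-\tfrac12)(n+\tfrac12)$; (c) the asymptotics follow from $G^f(0,t)=0$, and uniqueness of the regular solution is exactly what Remark \ref{remarkasymp} (via \cite{KTBesseltrans}) supplies. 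One small imprecision: you claim $(\phi_f^{(n)})'(r)=O(r)$ and tie it to the initial condition $G^f(r,0)=\int_0^r\tau q(\tau)\,d\tau$, which does not obviously give that rate. You do not need it: since $G^f\in C^2([0,\varrho_\Omega]\times[0,1])$, the derivative $(\phi_f^{(n)})'(r)=\int_0^1\sigma^{2n+1}G^f_r(r,1-\sigma^2)\,d\sigma$ is merely bounded near $r=0$, and boundedness already gives $r^n(\phi_f^{(n)})'(r)=o(nr^{n-1})$ for $n\geqslant 1$ and $r^{1/2}(\phi_f^{(0)})'(r)\to 0$ for $n=0$, which is all that the asymptotics (\ref{asymptperturbedbesselformalpowers}) require.
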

Since $\mathbf{T}_f[r^ne^{in\theta}]=\mathbf{T}_f[r^n]e^{in\theta}$, we have the equality
\begin{equation}\label{transmutationpropertyradialone}
	\mathbf{T}_f[r^n]=\phi_f^{(n)}(r)r^n, \quad \forall n\in \mathbb{N}_0.
\end{equation}
\begin{remark}\label{remarkconstruction}
	A numerical method for the construction of the functions $\{\phi_f^{(n)}(r)\}_{n=0}^{\infty}$ based on the Spectral Parameter Power Series Method (SPPS) can be founded in \cite[Sec. 3]{castillo} and \cite[Sec. 6]{mine1}.
\end{remark}
\begin{definition}
	Let $n\in \mathbb{N}_0$. The {\bf basic Bicomplex radial formal powers of degree} $n$ (associated to $f$), are the functions given by
	\begin{eqnarray}
		\mathcal{Z}_f^{(n)}(1; z) & := & \mathcal{T}_f[\widehat{z}^n], \label{radialformalpower1} \\
		\mathcal{Z}_f^{(n)}(\mathbf{j};z) & := &  \mathcal{T}_f[\mathbf{j}\widehat{z}^n].\label{radialformalpower2}
	\end{eqnarray}
The family of all basic radial formal powers is given by $\{\mathcal{Z}_f^{(n)}(1;z), \mathcal{Z}_f^{(n)}(\mathbf{j},z)\}_{n=0}^{\infty}$.
\end{definition}
By Theorem (\ref{theoremformalradialpowers1}) and formulas (\ref{transmutationpropertyradialone}) and (\ref{relationmultiplicationbyj}), the basic Bicomplex radial formal powers can be written as
\begin{eqnarray}
	\mathcal{Z}_f^{(0)}(1; z) & = & f(r), \label{firstformalpower1}\\
	\mathcal{Z}_f^{(0)}(\mathbf{j}; z) & = & \frac{1}{f(r)}\mathbf{j} \label{firstformalpowerj}\\
	\mathcal{Z}_f^{(n)}(1; z) & = & r^n\left(\phi_f^{(n)}(r)\cos(n\theta)+\mathbf{j}\phi_{\frac{1}{f}}^{(n)}\sin(n\theta)\right), \quad n\geqslant 1, \label{bformalpowerpolar}\\
	\mathcal{Z}_f^{(n)}(\mathbf{j}; z) & = & r^n\left(-\phi_f^{(n)}(r)\sin(n\theta)+\mathbf{j}\phi_{\frac{1}{f}}^{(n)}(r)\cos(n\theta)\right)\quad n\geqslant 1. \label{bformalpowerpolarj}
\end{eqnarray} 
\begin{remark}\label{remakrpolynomialstoformalpowers}
	Let $P(z)=\sum_{n=0}^{M}A_n\widehat{z}^n$ a Bicomplex polynomial. By (\ref{noBlinearity}) we have
	\begin{align*}
		\mathcal{T}_fP(z) & =  \sum_{n=0}^{M}\mathcal{T}_f\left[A_n\widehat{z}^n\right] = \sum_{n=0}^{M}\left(\operatorname{Sc}(A_n)\mathcal{T}_f[\widehat{z}^n]+\operatorname{Vec}(A_n)\mathbf{j}\mathcal{T}_{\frac{1}{f}}[\widehat{z}^n]\right) \\
		& = \sum_{n=0}^{M}\left(\operatorname{Sc}(A_n)\mathcal{T}_f[\widehat{z}^n]+\operatorname{Vec}(A_n)\mathcal{T}_{f}[\mathbf{j}\widehat{z}^n]\right)\\
		&= \sum_{n=0}^{M}\left(\operatorname{Sc}(A_n)\mathcal{Z}_f^{(n)}(1; z)+\operatorname{Vec}(B_n)\mathcal{Z}_f^{(n)}(\mathbf{j}; z)\right),
\end{align*}
where in the third inequality we use (\ref{relationmultiplicationbyj}). 
\end{remark}

Motivated by this result and following \cite{CamposBicomplex}, we introduce the next definition.

\begin{definition}
	Let $n\in \mathbb{N}_0$ and $A\in \mathbb{B}$. The Bicomplex radial formal power of degree $n$ and coefficient $A$ is defined by
	\begin{equation}
		\mathcal{Z}_f^{(n)}(A; z):= \operatorname{Sc}(A)\mathcal{Z}_f^{(n)}(1;z)+\operatorname{Vec}(A)\mathcal{Z}_f^{(n)}(\mathbf{j};z).
	\end{equation}
A radial formal polynomial of degree $N\in \mathbb{N}_0$ is a sum of the form 
\begin{equation}
	S_N(z):= \sum_{n=0}^{N}\mathcal{Z}_f^{(n)}(A_n; z), \quad \mbox{ with }\; \{A_n\}_{n=0}^{N}\in \mathbb{B} \; \mbox{ and }\; A_N\neq 0.
\end{equation}
The set of all radial formal polynomials of degree $N$ is denoted by $\mathscr{S}_f^N(\Omega;\mathbb{B})$. We denote $\mathscr{S}_f(\Omega;\mathbb{B}):=\bigcup_{n=0}^{\infty}\mathscr{S}_f^N(\Omega;\mathbb{B})$.
\end{definition}

In particular, the basic Bicomplex formal powers correspond to the coefficients $1$ and $\mathbf{j}$. 
\begin{remark}\label{remarkpropertiesformalpolynomials}
Note that for $\alpha, \beta\in \mathbb{C}$ and $A\in \mathbb{B}$ we have
\begin{equation*}
	\alpha \mathcal{Z}_f^{(n)}(A;z)=  \mathcal{Z}_f^{(n)}(\alpha A; z), \quad \mathcal{Z}_f^{(n)}(\alpha+\mathbf{j}\beta; z)  = ´ \alpha\mathcal{Z}_f^{(n)}(1; z)+\beta\mathcal{Z}_f^{(n)}(\mathbf{j}; z).
\end{equation*}
Hence $\mathscr{S}_f^N(\Omega;\mathbb{B})=\operatorname{Span}_{\mathbb{C}}\{\mathcal{Z}_f^{(n)}(1;z),\mathcal{Z}_f^{(n)}(\mathbf{j};z) \}_{n=0}^{N}$ for $N\in \mathbb{N}_0$ and\\ $\mathscr{S}_f(\Omega;\mathbb{B})=\operatorname{Span}_{\mathbb{C}}\{\mathcal{Z}_f^{(n)}(1;z),\mathcal{Z}_f^{(n)}(\mathbf{j};z) \}_{n=0}^{\infty}$. 
By Remark \ref{remakrpolynomialstoformalpowers}, $\mathcal{T}_f\left[\operatorname{Span}_{\mathbb{C}}\{\widehat{z}^n, \mathbf{j}\widehat{z}^n\}_{n=0}^N\right]=\mathscr{S}_f^N(\Omega; \mathbb{B})$.   
\end{remark}
\subsection{Completeness in the space $\mathbf{V}_f(\Omega;\mathbb{B})$}
Given $X\subset \mathbb{C}$, the star-hull (with respect to $z=0$) of $X$ is the set $\operatorname{Star}(X):=\bigcup_{z\in X}[0,z]$, that is, the smallest star-shaped domain with respect to $z=0$ containing $X$. If $K\subset \mathbb{C}$ is compact, then $\operatorname{Star}(K)$ is also compact \cite[Lemma 7]{mine1}.

\begin{lemma}\label{lemmanormoperatort}
	For any compact $K\subset \Omega$ and $W\in C(\Omega;\mathbb{B})$, the following inequality holds
	\begin{equation}\label{innequalityoperator1}
		\max_{z\in K}|\mathcal{T}_fW(z)|_{\mathbb{B}}\leqslant M_1\max_{z\in \operatorname{Star}(K)}|W(z)|_{\mathbb{B}},	\end{equation}
where $M_1=2\max\left\{1+\frac{1}{2}\|G^f\|_{C([0,\varrho_{\Omega}]\times [0,1])},1+\frac{1}{2}\|G^{\frac{1}{f}}\|_{C([0,\varrho_{\Omega}]\times [0,1])} \right\}$.
\end{lemma}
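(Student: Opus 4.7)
The plan is to decompose $\mathcal{T}_f W$ into its scalar and vectorial parts and estimate each of them using the integral representation from Theorem \ref{thereomtransformationoperator}, then reassemble using the identity $|W|_{\mathbb{B}}^2=|\operatorname{Sc}W|^2+|\operatorname{Vec}W|^2$ from (\ref{normsecondform}).

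First I would write $u=\operatorname{Sc}W$ and $v=\operatorname{Vec}W$, so that by definition $\mathcal{T}_f W = \mathbf{T}_f u + \mathbf{j}\,\mathbf{T}_{\frac{1}{f}}v$, and hence by (\ref{normsecondform})
\[
|\mathcal{T}_f W(z)|_{\mathbb{B}}=\sqrt{|\mathbf{T}_f u(z)|^2+|\mathbf{T}_{\frac{1}{f}}v(z)|^2}\leqslant |\mathbf{T}_f u(z)|+|\mathbf{T}_{\frac{1}{f}}v(z)|.
\]
Next, for $z\in K$ apply the definition (\ref{transmutation1}). The key observation is that for every $\sigma\in[0,1]$ the point $\sigma^2 z$ lies in the segment $[0,z]\subset \operatorname{Star}(K)$. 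Therefore
\[
|\mathbf{T}_f u(z)|\leqslant |u(z)|+\int_0^1\sigma\,|G^f(r,1-\sigma^2)|\,|u(\sigma^2 z)|\,d\sigma\leqslant \Bigl(1+\tfrac{1}{2}\|G^f\|_{C([0,\varrho_{\Omega}]\times[0,1])}\Bigr)\max_{\zeta\in \operatorname{Star}(K)}|u(\zeta)|,
\]
using that $\int_0^1\sigma\,d\sigma=\tfrac{1}{2}$ and the uniform bound on $G^f$ on its compact domain. An identical estimate applies to $\mathbf{T}_{\frac{1}{f}}v$ with $G^{\frac{1}{f}}$ in place of $G^f$.

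Finally, since $|u(\zeta)|\leqslant |W(\zeta)|_{\mathbb{B}}$ and $|v(\zeta)|\leqslant |W(\zeta)|_{\mathbb{B}}$ by (\ref{normsecondform}), combining the two bounds yields
\[
|\mathcal{T}_f W(z)|_{\mathbb{B}}\leqslant 2\max\Bigl\{1+\tfrac{1}{2}\|G^f\|_{C},\,1+\tfrac{1}{2}\|G^{\frac{1}{f}}\|_{C}\Bigr\}\max_{\zeta\in \operatorname{Star}(K)}|W(\zeta)|_{\mathbb{B}}=M_1\max_{\zeta\in \operatorname{Star}(K)}|W(\zeta)|_{\mathbb{B}},
\]
and taking the maximum over $z\in K$ gives the claim. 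The argument is essentially a direct estimation; the only subtle point is noticing that the natural domain one must take the maximum over on the right-hand side is $\operatorname{Star}(K)$ rather than $K$, because the integrand samples $W$ along the radial segments from $0$ to $z$, which is exactly why the constant $M_1$ uses the sup norm of the kernels on the full rectangle $[0,\varrho_{\Omega}]\times[0,1]$.
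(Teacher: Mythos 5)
Your proof is correct and takes essentially the same approach as the paper: decompose $\mathcal{T}_fW$ into scalar and vectorial parts, bound $|\mathcal{T}_fW(z)|_{\mathbb{B}}\leqslant |\mathbf{T}_f\operatorname{Sc}W(z)|+|\mathbf{T}_{\frac{1}{f}}\operatorname{Vec}W(z)|$, and apply the scalar estimate over the star-hull $\operatorname{Star}(K)$. The only difference is that you derive that scalar estimate directly from the integral representation (\ref{transmutation1}) (using $\sigma^2z\in[0,z]\subset\operatorname{Star}(K)$ and $\int_0^1\sigma\,d\sigma=\tfrac{1}{2}$), whereas the paper simply cites it from \cite{mine1}.
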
	
\begin{proof}
	Let $K\subset \Omega$ be compact. The operator $\mathbf{T}_f$ satisfies the following property \cite[Prop. 8]{mine1}
	\begin{equation*}
		\max_{z\in K}|\mathbf{T}_fu(z)|\leqslant \left(1+\frac{1}{2}\|G^f\|_{C([0,\varrho_{\Omega}]\times [0,1])}\right) \max_{z\in \operatorname{Star}(K)}|u(z)|, \quad \forall u\in C(\Omega).
	\end{equation*}
Thus, (\ref{innequalityoperator1}) follows from the fact that $|\mathcal{T}_fW(z)|_{\mathbb{B}}\leqslant |\mathbf{T}_f\operatorname{Sc}W(z)|+|\mathbf{T}_{\frac{1}{f}}\operatorname{Vec}W(z)|$.
\end{proof}
\begin{lemma}[\cite{mine2}, Lemma 5.10]\label{lemmasimplyconnected}
	If $X\subset  \mathbb{C}$ is a bounded set star-shaped with respect to $z=0$, then $\mathbb{C}\setminus X$ is connected. In particular, if $X=\Omega$  is a domain, then $\Omega$ is simply connected. 
\end{lemma}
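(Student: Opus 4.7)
The plan is to prove path-connectedness of $\mathbb{C}\setminus X$ by the simple observation that, from any point $w$ outside a star-shaped set, one can escape to infinity along a radial ray without re-entering the set. Concretely, I would fix $R>0$ large enough so that $X\subset B_R^{\mathbb{C}}(0)$ (possible since $X$ is bounded) and take arbitrary $w_1,w_2\in \mathbb{C}\setminus X$.

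The first key step is the claim that for any $w\in \mathbb{C}\setminus X$, the ray $\gamma_w:=\{tw\,|\,t\geqslant 1\}$ is entirely contained in $\mathbb{C}\setminus X$. Indeed, $w\neq 0$ (since $0\in X$ by the definition of star-shapedness with respect to the origin), and if there were some $t_0\geqslant 1$ with $t_0 w\in X$, then the star-shaped property would give $[0,t_0w]\subset X$, and in particular $w=\frac{1}{t_0}(t_0w)\in[0,t_0w]\subset X$, a contradiction. Choosing $t_w\geqslant 1$ large enough so that $|t_ww|>R$, the continuous path $t\mapsto tw$, $t\in[1,t_w]$, lies in $\mathbb{C}\setminus X$ and joins $w$ to a point in $\mathbb{C}\setminus \overline{B_R^{\mathbb{C}}(0)}$.

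In the second step I use that $\mathbb{C}\setminus \overline{B_R^{\mathbb{C}}(0)}\subset\mathbb{C}\setminus X$ is path-connected (it is the exterior of a closed disk). Concatenating the ray from $w_1$ to a point outside $\overline{B_R^{\mathbb{C}}(0)}$, a path joining the two exterior endpoints, and the reversed ray from $w_2$, gives a continuous path in $\mathbb{C}\setminus X$ from $w_1$ to $w_2$. Hence $\mathbb{C}\setminus X$ is path-connected, and in particular connected.

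For the simply-connectedness conclusion when $X=\Omega$ is an open set, I would invoke the standard topological characterization: a bounded domain $\Omega\subset \mathbb{C}$ is simply connected if and only if its complement in $\mathbb{C}$ is connected (equivalently, its complement in the Riemann sphere is connected, which follows here because $(\mathbb{C}\setminus\Omega)\cup\{\infty\}$ is connected when $\mathbb{C}\setminus\Omega$ is connected and unbounded). There is essentially no real obstacle in the argument; the only point that deserves explicit care is the verification that the radial ray never re-enters $X$, which is exactly where the star-shaped hypothesis is used in an essential way.
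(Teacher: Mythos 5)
Your proof is correct: the radial-ray escape argument (if $t_0w\in X$ for some $t_0\geqslant 1$ then $w\in[0,t_0w]\subset X$, a contradiction) together with the path-connected exterior of a large disk establishes connectedness of $\mathbb{C}\setminus X$, and the passage to simple connectedness via the connectedness of the complement in the Riemann sphere is the standard characterization. The paper itself gives no proof of this lemma, only citing \cite{mine2}, so there is nothing to compare against; your argument is the expected self-contained one.
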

\begin{lemma}[Runge's property]\label{lemmarunge}
	Let $V\in \operatorname{Hol}(\Omega;\mathbb{B})$ and $K\subset \Omega$ be compact. Given $\varepsilon>0$, there exists a Bicomplex polynomial $P_N(z)=\sum_{n=0}^{N}A_n \widehat{z}^n$, with $A_0, \cdots, A_N\in \mathbb{B}$, such that
	\begin{equation}\label{runge1}
		\max_{z\in K}|V(z)-P_N(z)|_{\mathbb{B}}<\varepsilon
	\end{equation}
\end{lemma}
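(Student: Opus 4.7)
The plan is to reduce the statement to two applications of the classical scalar Runge theorem by exploiting the idempotent decomposition from Proposition \ref{properties1}(3). Write $V=\mathbf{p}^{+}V^{+}+\mathbf{p}^{-}V^{-}$; by Remark \ref{remarkdecompositionholomorphic}(i), $V^{-}$ is complex holomorphic on $\Omega$ while $V^{+}$ is complex anti-holomorphic on $\Omega$. A candidate Bicomplex polynomial $P_{N}(z)=\sum_{n=0}^{N}A_{n}\widehat{z}^{n}$ with $A_{n}=\mathbf{p}^{+}a_{n}^{+}+\mathbf{p}^{-}a_{n}^{-}$ decomposes, by formula (\ref{powerz}) and the product rule (\ref{producbicomplex}), as
\[
P_{N}(z)=\mathbf{p}^{+}\sum_{n=0}^{N}a_{n}^{+}(z^{*})^{n}+\mathbf{p}^{-}\sum_{n=0}^{N}a_{n}^{-}z^{n}=:\mathbf{p}^{+}P_{N}^{+}(z)+\mathbf{p}^{-}P_{N}^{-}(z),
\]
so that $P_{N}^{-}$ is a holomorphic polynomial and $P_{N}^{+}$ is an anti-holomorphic polynomial. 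Hence the task splits into approximating $V^{-}$ on $K$ by holomorphic polynomials and $V^{+}$ on $K$ by anti-holomorphic polynomials.

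For $V^{-}$, I would invoke Lemma \ref{lemmasimplyconnected}: since $\Omega$ is star-shaped with respect to $0$, $\Omega$ is simply connected, so by the classical Runge theorem (in the form that holds for simply connected domains) there exists a polynomial $P_{N}^{-}(z)=\sum a_{n}^{-}z^{n}$ with $\max_{z\in K}|V^{-}(z)-P_{N}^{-}(z)|<\varepsilon/\sqrt{2}$. For $V^{+}$, set $\widetilde{V}^{+}(w):=V^{+}(w^{*})$, $w\in\Omega^{*}:=\{z^{*}:z\in\Omega\}$; then $\widetilde{V}^{+}$ is complex holomorphic on $\Omega^{*}$. Because $\Omega$ is star-shaped with respect to $0$, so is $\Omega^{*}$, hence simply connected, and Runge again gives a polynomial $q_{N}(w)=\sum a_{n}^{+}w^{n}$ with $\max_{w\in K^{*}}|\widetilde{V}^{+}(w)-q_{N}(w)|<\varepsilon/\sqrt{2}$. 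Substituting $w=z^{*}$ yields $\max_{z\in K}|V^{+}(z)-P_{N}^{+}(z)|<\varepsilon/\sqrt{2}$ with $P_{N}^{+}(z)=q_{N}(z^{*})$.

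Finally, define $A_{n}=\mathbf{p}^{+}a_{n}^{+}+\mathbf{p}^{-}a_{n}^{-}$ and $P_{N}(z)=\sum_{n=0}^{N}A_{n}\widehat{z}^{n}$. By inequality (\ref{equivalentnorms}),
\[
|V(z)-P_{N}(z)|_{\mathbb{B}}\leqslant \tfrac{1}{\sqrt{2}}\bigl(|V^{+}(z)-P_{N}^{+}(z)|+|V^{-}(z)-P_{N}^{-}(z)|\bigr)<\varepsilon
\]
uniformly on $K$, which proves (\ref{runge1}). The only non-routine point is the reduction of the anti-holomorphic component to a holomorphic one, for which I expect the conjugation trick $\widetilde{V}^{+}(w)=V^{+}(w^{*})$ together with the star-shapedness of $\Omega^{*}$ to suffice; everything else is a direct transcription of scalar Runge approximation through the $\mathbf{p}^{\pm}$-decomposition of $\mathbb{B}$.
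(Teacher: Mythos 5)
Your proof is correct and follows essentially the same route as the paper: split $V$ via the idempotent decomposition into a holomorphic part $V^{-}$ and an anti-holomorphic part $V^{+}$, apply the classical Runge theorem (using Lemma \ref{lemmasimplyconnected} for simple connectivity), reassemble the two scalar polynomials into a Bicomplex polynomial via (\ref{powerz}), and conclude with (\ref{equivalentnorms}). The only cosmetic difference is that you reduce the anti-holomorphic component by conjugating the variable ($w=z^{*}$, working on $\Omega^{*}$) whereas the paper conjugates the function ($(V^{+})^{*}$ is holomorphic on $\Omega$); these are interchangeable.
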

\begin{proof}
	Since $V\in \operatorname{Hol}(\Omega; \mathbb{B})$, then $V^+$ is anti-holomorphic and $V^{-}$ is holomorphic. Hence, $\left(V^{-}\right)^{*}$ is holomorphic in $\Omega$. Since $\Omega$ is bounded and star-shaped with respect to $z=0$, by Lemma  \ref{lemmasimplyconnected} and the complex Runge's theorem \cite[Cor. 1.15]{conway}, there exists polynomials $p_1(z)=\sum_{n=0}^{N_1}a_nz^n$ and $p_2(z)=\sum_{n=0}^{N_2}b_nz^n$ such that
	\begin{equation*}
		\max_{z\in K}|\left(V^+(z)\right)^{*}-p_1(z)|<\frac{\varepsilon}{2\sqrt{2}}, \quad  \max_{z\in K}|V^{-}(z)-p_2(z)|<\frac{\varepsilon}{2\sqrt{2}}.
	\end{equation*}
Take $N=\max\{N_1,N_2\}$, define 
\[
\tilde{a}_n:=\begin{cases}
	a_n, & \mbox{ if } 0\leqslant n\leqslant N_1,\\
	0, & \mbox{ if } N_1<n\leqslant N,
\end{cases}, \quad 
\tilde{b}_n:=\begin{cases}
	b_n, & \mbox{ if } 0\leqslant n\leqslant N_1,\\
	0, & \mbox{ if } N_1<n\leqslant N,
\end{cases}
\]
and $\tilde{p_1}(z):=\sum_{n=0}^{N}\tilde{a}_nz^n$, $\tilde{p_2}(z):=\sum_{n=0}^{N}\tilde{b}_nz^n$. Finally, define $P_N(z)=\mathbf{p}^+(\tilde{p_1}(z))^*+\mathbf{p}^{-}\tilde{p_2}(z)$. By (\ref{powerz}), $P_N$ is a Bicomplex polynomial. Thus, for $z\in K$ we have
\begin{align*}
	|V(z)-P_N(z)|_{\mathbb{B}} &\leqslant \frac{1}{\sqrt{2}}\left(|V^+(z)-P^+(z)|+|V^{-}(z)-P^{-}(z)|\right)\\
	&= \frac{1}{\sqrt{2}}\left(|V^+(z)-\left(p_1(z)\right)^{*}|+|V^{-}(z)-p_1(z)|\right) \leqslant \frac{\varepsilon}{2},
\end{align*}
where in the first inequality we use (\ref{equivalentnorms}). Since $z\in K$ was arbitrary, we conclude that $\displaystyle \max_{z\in K}|V(z)-P_N(z)|<\varepsilon$.
\end{proof}

\begin{theorem}
	The Bicomplex radial formal powers are a complete system of solutions for the radial Vekua equation, that is, for any solution $W\in \operatorname{V}_f(\Omega; \mathbb{B})$ and any compact $K\subset \Omega$, there exists a sequence $\{S_n(z)\}\in\mathscr{S}_f(\Omega;\mathbb{B})$ such that $S_n \overset{K}{\rightrightarrows} W$. Furthermore, if $\Omega=B_{\varrho_{\Omega}}^{\mathbb{C}}(0)$, there exists constants $\{A_n\}_{n=0}^{\infty}\subset \mathbb{B}$ such that
	\begin{equation}\label{seriesofradialpowers}
		W(z)=\sum_{n=0}^{\infty}\mathcal{Z}_f^{(n)}(A_n;z), \quad z\in B_{\varrho_{\Omega}}^f(0),
	\end{equation}
and the series converges absolutely and uniformly on compact subsets of $B_{\varrho_{\Omega}}^{\mathbb{C}}(0)$.
\end{theorem}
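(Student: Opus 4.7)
The plan is to reduce everything to approximation of Bicomplex holomorphic functions by Bicomplex polynomials via the transmutation operator $\mathcal{T}_f$, and then push the resulting estimates forward using continuity. By Proposition \ref{proptransmuteholo} there exists $V\in \operatorname{Hol}(\Omega;\mathbb{B})$ with $W=\mathcal{T}_f V$. All subsequent work is carried out on $V$ and transferred to $W$ through Lemma \ref{lemmanormoperatort}, which, combined with Remark \ref{remakrpolynomialstoformalpowers}, shows that the image under $\mathcal{T}_f$ of a Bicomplex polynomial $\sum_{n=0}^N A_n\widehat{z}^n$ is precisely the formal polynomial $\sum_{n=0}^N \mathcal{Z}_f^{(n)}(A_n;z)\in \mathscr{S}_f^N(\Omega;\mathbb{B})$.

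For the first assertion, fix a compact $K\subset \Omega$. Since $\Omega$ is star-shaped with respect to $0$, the set $\operatorname{Star}(K)\subset \Omega$ is compact. Given $\varepsilon>0$, I would apply Lemma \ref{lemmarunge} to $V$ on the compact $\operatorname{Star}(K)$, obtaining a Bicomplex polynomial $P_N(z)=\sum_{n=0}^N A_n\widehat{z}^n$ with $\max_{\operatorname{Star}(K)}|V-P_N|_{\mathbb{B}}<\varepsilon/M_1$. Applying (\ref{innequalityoperator1}) then yields
\begin{equation*}
\max_{z\in K}|W(z)-\mathcal{T}_f P_N(z)|_{\mathbb{B}}\leqslant M_1\max_{\operatorname{Star}(K)}|V-P_N|_{\mathbb{B}}<\varepsilon,
\end{equation*}
and $\mathcal{T}_f P_N\in \mathscr{S}_f^N(\Omega;\mathbb{B})$. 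Taking $\varepsilon=1/n$ and exhausting $\Omega$ by compacts delivers the required sequence $\{S_n\}\subset \mathscr{S}_f(\Omega;\mathbb{B})$.

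For the disk case $\Omega=B_{\varrho_{\Omega}}^{\mathbb{C}}(0)$, I would replace Runge's approximation by the Taylor expansion of Proposition \ref{propsitiontaylor}(ii) around $z_0=0$, which yields $V(z)=\sum_{n=0}^{\infty}A_n\widehat{z}^n$ with $A_n=\boldsymbol{\partial}^n V(0)/n!$, converging absolutely and uniformly on compacts of $B_{\varrho_{\Omega}}^{\mathbb{C}}(0)$. Setting $S_N(z)=\mathcal{T}_f\bigl(\sum_{n=0}^N A_n\widehat{z}^n\bigr)=\sum_{n=0}^N \mathcal{Z}_f^{(n)}(A_n;z)$, continuity of $\mathcal{T}_f$ on $C(\Omega;\mathbb{B})$ (Proposition \ref{propertiesopt}) forces $S_N\to W$ uniformly on compacts. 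For the claimed absolute and uniform convergence of the series itself, I would apply the Weierstrass $M$-test: a direct computation from (\ref{powerz}) gives $|\widehat{z}^n|_{\mathbb{B}}=|z|^n$, so for any compact $K\subset B_R^{\mathbb{C}}(0)$ with $R<\varrho_{\Omega}$, Lemma \ref{lemmanormoperatort} together with (\ref{norminequality}) produces the majorant
\begin{equation*}
\max_{z\in K}|\mathcal{Z}_f^{(n)}(A_n;z)|_{\mathbb{B}}\leqslant \sqrt{2}\,M_1 |A_n|_{\mathbb{B}}R^n,
\end{equation*}
and $\sum_{n=0}^{\infty}|A_n|_{\mathbb{B}}R^n<\infty$ follows from the absolute convergence of the Taylor series of $V$ on $\overline{B_R^{\mathbb{C}}(0)}$.

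The only delicate point is that $\mathcal{T}_f$ is not $\mathbb{B}$-linear (Remark \ref{blinearity}), which would seem to obstruct the factorisation $\mathcal{T}_f[A_n\widehat{z}^n]=A_n\mathcal{Z}_f^{(n)}(1;z)$; however, Remark \ref{remakrpolynomialstoformalpowers} already records the correct identity $\mathcal{T}_f[A_n\widehat{z}^n]=\operatorname{Sc}(A_n)\mathcal{Z}_f^{(n)}(1;z)+\operatorname{Vec}(A_n)\mathcal{Z}_f^{(n)}(\mathbf{j};z)=\mathcal{Z}_f^{(n)}(A_n;z)$, so this obstacle dissolves at the level of $\mathbb{C}$-linear combinations. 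Beyond this, the argument is a direct consequence of Proposition \ref{proptransmuteholo}, Lemmas \ref{lemmanormoperatort} and \ref{lemmarunge}, Proposition \ref{propsitiontaylor}, and the continuity of $\mathcal{T}_f$ already established.
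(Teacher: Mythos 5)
Your proposal is correct and follows essentially the same route as the paper: reduce to $V=\mathcal{T}_f^{-1}W\in\operatorname{Hol}(\Omega;\mathbb{B})$ via Proposition \ref{proptransmuteholo}, approximate $V$ on $\operatorname{Star}(K)$ by Bicomplex polynomials through Lemma \ref{lemmarunge}, transfer the estimate with Lemma \ref{lemmanormoperatort}, and in the disk case combine the Taylor expansion of Proposition \ref{propsitiontaylor}(ii) with the continuity of $\mathcal{T}_f$ and the bound $|\mathcal{T}_f[A_n\widehat{z}^n]|_{\mathbb{B}}\leqslant \sqrt{2}M_1|A_n|_{\mathbb{B}}r^n$ for the absolute and uniform convergence. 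Your explicit handling of the failure of $\mathbb{B}$-linearity via Remark \ref{remakrpolynomialstoformalpowers} matches the paper's implicit use of the same identity.
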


\begin{proof}
	Let $K\subset \Omega$ be compact and $W\in \operatorname{V}_f(\Omega; \mathbb{B})$. By Proposition \ref{proptransmuteholo}, $V=\mathcal{T}_f^{-1}W\in \operatorname{Hol}(\Omega; \mathbb{B})$. For each $N\in \mathbb{N}$, since $\operatorname{Star}(K)$ is compact,  by Lemma \ref{lemmarunge} there exists a Bicomplex polynomial $P_N(z)=\sum_{n=0}^{M_N}B_n\hat{z}^n$ satisfying $\displaystyle \max_{z\in \operatorname{Star}(K)}|V(z)-P_N(z)|<\frac{1}{M_1N}$, where $M_1$ is defined as in Lemma \ref{lemmanormoperatort}. Set $S_N(z)= \mathcal{T}_fP_N(z)$. By Remark \ref{remarkpropertiesformalpolynomials}, $S_N\in \mathscr{S}_f(\Omega;\mathbb{B})$.  Given $z\in K$, by Lemma \ref{lemmanormoperatort} we have
	\begin{align*}
		|W(z)-S_N(z)|_{\mathbb{B}}=|\mathcal{T}_f(V-P_N)(z)|_{\mathbb{B}}\leqslant M_1\max_{z\in \operatorname{Star}(K)}|V(z)-P_N(z)|_{\mathbb{B}}\leqslant \frac{1}{N}.
	\end{align*}
Since $z\in K$ was arbitrary, we conclude that $\displaystyle \max_{z\in K}|W(z)-S_N(z)|_{\mathbb{B}}\leqslant \frac{1}{N}$. Thus, the sequence $\{S_N(z)\}_{n=0}^{\infty}$ converges uniformly to $W$ on $K$.

Now, suppose that $\Omega=B_{\varrho_{\Omega}}^{\mathbb{C}}(0)$. By Proposition \ref{propsitiontaylor}(ii), 
\begin{equation}\label{auxiliarseries1}
	V(z)=\sum_{n=0}A_n\widehat{z}^n, \quad \mbox{ with } A_n=\frac{\boldsymbol{\partial}^nV(0)}{n!}, n\in \mathbb{N},
\end{equation}
and the series converges in the topology of $C(\Omega; \mathbb{B})$. By the linearity and the continuity in $C(\Omega;\mathbb{B})$ (Proposition \ref{propertiesopt}) of $\mathcal{T}_f$ we have
\begin{align*}
	W(z)= \mathcal{T}_f\left[\sum_{n=0}^{\infty}A_n\widehat{z}^n\right]=\sum_{n=0}^{\infty}\mathcal{T}_f\left[A_n\widehat{z}^n\right]=\sum_{n=0}^{\infty}\mathcal{Z}_f^{(n)}(A_n;z)
\end{align*} 
and by the continuity of $\mathcal{T}_f$, the series converges in the topology of $C(\Omega; \mathbb{B})$. For the absolutely convergence, consider $0<r<\varrho_{\Omega}$ and $z\in \overline{B_r^{\mathbb{C}}(0)}$. By Lemma \ref{lemmanormoperatort} we obtain
\[
|\mathcal{T}_f[A_n\widehat{z}^n]|_{\mathbb{B}} \leqslant M_1 \max_{z\in \operatorname{Star}(\overline{B_r^{\mathbb{C}}(0)})}|A_n\widehat{z}^n|_{\mathbb{B}} \leqslant M_1\sqrt{2}|A_n|\max_{z\in \overline{B_r^{\mathbb{C}}(0)}}|\widehat{z}^n|=M_1\sqrt{2}|A_n|r^n
\]
(here, we use the fact that $\operatorname{Star}(X)=X$ if $X$ is star-shaped with respect to $z=0$). By Proposition \ref{propsitiontaylor}(ii), series (\ref{auxiliarseries1}) converges absolutely in $B_r^{\mathbb{C}}(0)$, hence $\sum_{n=0}^{\infty}|\mathcal{T}_f[A_n\widehat{z}^n]|_{\mathbb{C}}$ is dominated by the convergent series $\sum_{n=0}^{\infty}|A_n|r^n$. Therefore, (\ref{seriesofradialpowers}) converges absolutely in $\overline{B_r^{\mathbb{C}}(0)}$. 
\end{proof}
\subsection{An orthogonal basis for the Bergman space on a disk}
\begin{theorem}\label{theoremorthonormalbasisbergman}
	Suppose that $\Omega=B_{\varrho_{\Omega}}(0)$. The basic Bicomplex radial formal powers \\$\{\mathcal{Z}_f^{(n)}(1;z), \mathcal{Z}_f^{(n)}(\mathbf{j}; z)\}_{n=0}^{\infty}$ satisfy the following orthogonality relations for all $m,n\in \mathbb{N}$:
	\begin{align}
		\left\langle \mathcal{Z}_f^{(n)}(1; \cdot),\mathcal{Z}_f^{(m)}(\mathbf{j}; \cdot)\right\rangle_{L_2(\Omega; \mathbb{B})} & =0,\label{orthogonalityformalpowers1} \\
		 \left\langle \mathcal{Z}_f^{(n)}(\Lambda; \cdot),\mathcal{Z}_f^{(m)}(\Lambda; \cdot)\right\rangle_{L_2(\Omega; \mathbb{B})} & = \pi \left(\left\|\phi_f^{(n)}\right\|_{L_2(0,\varrho_{\Omega}; r^{n+1}dr)}^2+\left\|\phi_{\frac{1}{f}}^{(n)}\right\|_{L_2(0,\varrho_{\Omega}; r^{n+1}dr)}^2\right)\delta_{(n,m)}, \label{orthogonalityformalpowers2}\\
		 \left\langle \mathcal{Z}_f^{(0)}(1; \cdot),\mathcal{Z}_f^{(m)}(\Lambda; \cdot)\right\rangle_{L_2(\Omega; \mathbb{B})} & = \left\langle \mathcal{Z}_f^{(0)}(\mathbf{j}; \cdot),\mathcal{Z}_f^{(m)}(\Lambda; \cdot)\right\rangle_{L_2(\Omega; \mathbb{B})}=0 \label{orthogonalityformalpowers3}
	\end{align}
for $\Lambda \in \{1,\mathbf{j}\}$. Furthermore, the basic Bicomplex radial formal powers are an orthogonal basis for the Bergman space $\mathcal{A}_f^2(\Omega; \mathbb{B})$, 
\end{theorem}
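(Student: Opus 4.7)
The plan has two stages: first to verify the orthogonality relations (\ref{orthogonalityformalpowers1})--(\ref{orthogonalityformalpowers3}) by direct computation in polar coordinates using the explicit representations (\ref{firstformalpower1})--(\ref{bformalpowerpolarj}); second to deduce the basis property by transporting Proposition \ref{proporthonormalbasispowers} through the topological isomorphism $\mathcal{T}_f:\mathcal{A}^2(B_{\varrho_{\Omega}}^{\mathbb{C}}(0); \mathbb{B})\to \mathcal{A}_f^2(B_{\varrho_{\Omega}}^{\mathbb{C}}(0); \mathbb{B})$ provided by Proposition \ref{propinvertibilityinvekua}. Since by construction $\mathcal{T}_f[\widehat{z}^n] = \mathcal{Z}_f^{(n)}(1;\cdot)$ and $\mathcal{T}_f[\mathbf{j}\widehat{z}^n] = \mathcal{Z}_f^{(n)}(\mathbf{j};\cdot)$, the complete family from Proposition \ref{proporthonormalbasispowers} will be carried to a complete family in $\mathcal{A}_f^2$, and the orthogonality proved in the first stage will promote it to an orthogonal basis.

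For the orthogonality, I would work in polar coordinates $z = re^{i\theta}$, use the decomposition $\langle W, V\rangle_{\mathbb{B}} = (\operatorname{Sc}W)(\operatorname{Sc}V)^* + (\operatorname{Vec}W)(\operatorname{Vec}V)^*$ from (\ref{innerproduct2ndfor}), and substitute the explicit forms (\ref{bformalpowerpolar})--(\ref{bformalpowerpolarj}). For $n,m \geq 1$ the integrand splits into terms whose angular parts are $\cos(n\theta)\cos(m\theta)$, $\sin(n\theta)\sin(m\theta)$, $\cos(n\theta)\sin(m\theta)$, and $\sin(n\theta)\cos(m\theta)$. The classical identities $\int_0^{2\pi}\cos(n\theta)\sin(m\theta)\,d\theta = 0$ and $\int_0^{2\pi}\cos(n\theta)\cos(m\theta)\,d\theta = \int_0^{2\pi}\sin(n\theta)\sin(m\theta)\,d\theta = \pi\delta_{nm}$ yield (\ref{orthogonalityformalpowers1}) immediately; for (\ref{orthogonalityformalpowers2}) only the diagonal $n=m$ survives, and the $\theta$-integration collapses the expression to the weighted radial integral of $|\phi_f^{(n)}|^2 + |\phi_{\frac{1}{f}}^{(n)}|^2$ appearing on the right. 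For (\ref{orthogonalityformalpowers3}), note from (\ref{firstformalpower1}) and (\ref{firstformalpowerj}) that $\mathcal{Z}_f^{(0)}(1;z)=f(r)$ and $\mathcal{Z}_f^{(0)}(\mathbf{j};z)=\mathbf{j}/f(r)$ are $\theta$-independent, while for $m\geq 1$ the scalar and vectorial parts of $\mathcal{Z}_f^{(m)}(\Lambda;\cdot)$ involve only $\cos(m\theta)$ or $\sin(m\theta)$, both orthogonal to constants on $[0,2\pi]$.

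For the basis property, Proposition \ref{proporthonormalbasispowers} supplies for every $V \in \mathcal{A}^2(B_{\varrho_{\Omega}}^{\mathbb{C}}(0); \mathbb{B})$ an expansion $V = \sum_{n=0}^{\infty}(a_n \widehat{z}^n + b_n \mathbf{j}\widehat{z}^n)$ converging in the $L_2$-norm with complex coefficients $a_n, b_n$. Given $W \in \mathcal{A}_f^2(B_{\varrho_{\Omega}}^{\mathbb{C}}(0); \mathbb{B})$, Proposition \ref{propinvertibilityinvekua} provides a unique $V = \mathcal{T}_f^{-1}W \in \mathcal{A}^2(B_{\varrho_{\Omega}}^{\mathbb{C}}(0); \mathbb{B})$. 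Applying $\mathcal{T}_f$ term by term, which is legitimate because by Remark \ref{blinearity} the operator $\mathcal{T}_f$ is $\mathbb{C}$-linear and by Proposition \ref{propinvertibilityinvekua} bounded from $\mathcal{A}^2$ to $\mathcal{A}_f^2$, gives
\[
W = \sum_{n=0}^{\infty}\bigl(a_n \mathcal{Z}_f^{(n)}(1;\cdot) + b_n \mathcal{Z}_f^{(n)}(\mathbf{j};\cdot)\bigr),
\]
convergent in $L_2(\Omega;\mathbb{B})$, where Remark \ref{remakrpolynomialstoformalpowers} identifies the images. Combined with the orthogonality already proved, this gives the orthogonal basis claim.

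The main technical issue I anticipate is the separate book-keeping around $n = 0$: the zeroth-order formal powers (\ref{firstformalpower1})--(\ref{firstformalpowerj}) have a different functional form from the generic $n \geq 1$ case, so (\ref{orthogonalityformalpowers3}) must be handled by hand rather than subsumed into a uniform calculation. A minor secondary check is that the norms on the right of (\ref{orthogonalityformalpowers2}) are strictly positive, so that no element of the orthogonal system is null; this follows from the asymptotics (\ref{asymptperturbedbesselformalpowers}) of the solutions $y_m^f$, which force $\phi_f^{(n)}$ and $\phi_{\frac{1}{f}}^{(n)}$ to be continuous with limit $1$ at $r=0$, and hence to contribute positive $L_2$-mass on any interval $(0,\varrho_{\Omega}]$.
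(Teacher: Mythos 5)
Your proposal is correct and follows essentially the same route as the paper: direct polar-coordinate computation of the inner products via the decomposition (\ref{innerproduct2ndfor}) and the explicit forms (\ref{firstformalpower1})--(\ref{bformalpowerpolarj}), then transporting the expansion of Proposition \ref{proporthonormalbasispowers} through the bounded invertible operator $\mathcal{T}_f$ of Proposition \ref{propinvertibilityinvekua}, identifying the images by Remark \ref{remakrpolynomialstoformalpowers}. Your added observation that the asymptotics (\ref{asymptperturbedbesselformalpowers}) guarantee the norms in (\ref{orthogonalityformalpowers2}) are nonzero is a sensible extra check that the paper leaves implicit.
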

\begin{proof}
	We prove (\ref{orthogonalityformalpowers1}) (the proof of (\ref{orthogonalityformalpowers2}) and (\ref{orthogonalityformalpowers3}) is analogous). Denote\\ $I_{n,m}= \left\langle \mathcal{Z}_f^{(n)}(1; \cdot),\mathcal{Z}_f^{(m)}(\mathbf{j}; \cdot)\right\rangle_{L_2(\Omega; \mathbb{B})}$.  We have
	\begin{align*}
		 I_{n,m}& = \iint\limits_{B_{\rho}^{\mathbb{C}}(0)}\Big{\{}\operatorname{Sc}\mathcal{Z}_f^{(n)}(1;z) \left(\operatorname{Sc}\mathcal{Z}_f^{(m)}(\mathbf{j};z)\right)^{*} +\operatorname{Vec}\mathcal{Z}_f^{(n)}(1;z)\left(\operatorname{Vec}\mathcal{Z}_f^{(m)}(\mathbf{j};z)\right)^{*} \Big{\}}dA_z\\
		&= \int_0^1r\int_0^{2\pi}\Bigg{\{}-r^{n+m}\phi_f^{(n)}(r)\left(\phi_{f}^{(m)}(r)\right)^{*}\cos(n\theta)\sin(m\theta)\\
		&\qquad\qquad+r^{n+m}\phi_{\frac{1}{f}}^{(n)}(r)\left(\phi_{\frac{1}{f}}^{(m)}(r)\right)^{*}\sin(n\theta)\cos(m\theta)\Bigg{\}}d\theta dr\\
		&= -\int_0^1r^{1+n+m}\phi_f^{(n)}(r)\left(\phi_{f}^{(m)}(r)\right)^{*}dr\int_0^{2\pi}\cos(n\theta)\sin(m\theta)d\theta \\
		& \qquad +\int_0^1r^{1+n+m}\phi_{\frac{1}{f}}^{(n)}(r)\left(\phi_{\frac{1}{f}}^{(m)}(r)\right)^{*}dr\int_0^{2\pi}\sin(n\theta)\cos(m\theta)d\theta =0.
	\end{align*}
Relations (\ref{orthogonalityformalpowers1})-(\ref{orthogonalityformalpowers3}) implies that $\{\mathcal{Z}_f^{(n)}(1; z), \mathcal{Z}_f^{(n)}(\mathbf{j};z)\}_{n=0}^{\infty}$ is an orthogonal system. Take $W\in \mathcal{A}_f^2(\Omega; \mathbb{B})$. By Proposition (\ref{propinvertibilityinvekua}), $V=\mathcal{T}_f^{-1}W\in \mathcal{A}^2(\Omega;\mathbb{B})$, and  by Proposition \ref{proporthonormalbasispowers} we can write
\[
V(z)= \sum_{n=0}^{\infty}\left[a_n \widehat{z}^n+b_n\mathbf{j}\widehat{z}^n\right] 
\]
for some coefficients $\{a_n,b_n\}_{n=0}^{\infty}\subset \mathbb{C}$. This series converges in $L_2(\Omega; \mathbb{B})$. Since\\$\mathcal{T}_f\in \mathcal{B}\left(\mathcal{A}^2(\Omega;\mathbb{B}), \mathcal{A}_f^2(\Omega; \mathbb{B})\right)$, we obtain 
\begin{align*}
	W(z) &=\mathcal{T}_fV(z)  = \sum_{n=0}^{\infty}\left(a_n \mathcal{T}_f[\widehat{z}^n]+b_n\mathcal{T}_f[\mathbf{j}\widehat{z}^n]\right)= \sum_{n=0}^{\infty}\left(a_n \mathcal{Z}_f^{(n)}(1,z)+b_n\mathcal{Z}_f^{(n)}(\mathbf{j},z)\right). 
\end{align*}
Hence, $W$ can be expanded into a Fourier series of Bicomplex radial formal powers. 
\end{proof}

When $\Omega$ is just bounded and star-shaped (with respect to $z=0$), the following result establishes some conditions for the completeness of formal powers.
\begin{theorem}
	If $\Omega= \operatorname{Int}(\overline{\Omega})$, then $\mathscr{S}_f(\Omega;\mathbb{B})$ is a complete system in $\mathcal{A}_f^2(\Omega)$.
\end{theorem}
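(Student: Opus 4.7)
The plan is to transfer the statement to the Bicomplex analytic Bergman space via the transmutation operator and then invoke a classical polynomial density theorem. By Proposition \ref{propinvertibilityinvekua}, the operator $\mathcal{T}_f:\mathcal{A}^2(\Omega;\mathbb{B})\to\mathcal{A}_f^2(\Omega;\mathbb{B})$ is a bounded bijection with bounded inverse, and by Remark \ref{remarkpropertiesformalpolynomials} it carries $\operatorname{Span}_{\mathbb{C}}\{\widehat{z}^n,\mathbf{j}\widehat{z}^n\}_{n=0}^{\infty}$ onto $\mathscr{S}_f(\Omega;\mathbb{B})$. Since topological isomorphisms preserve density, it suffices to prove that the Bicomplex polynomials are dense in $\mathcal{A}^2(\Omega;\mathbb{B})$.

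Next, I would use the idempotent decomposition from Subsection \ref{secbicomplexanalytic}: the map $W\mapsto(W^+,W^-)$ identifies $\mathcal{A}^2(\Omega;\mathbb{B})$ isometrically with the Hilbert direct sum $\overline{\mathcal{A}}^2(\Omega)\oplus\mathcal{A}^2(\Omega)$, and by (\ref{powerz}) a Bicomplex polynomial $\sum_{n=0}^{N}A_n\widehat{z}^n$ corresponds to the pair $\bigl(\sum A_n^{+}(z^*)^n,\sum A_n^{-}z^n\bigr)$. Since the coefficients $A_n^{\pm}$ range independently over $\mathbb{C}$, density of Bicomplex polynomials in $\mathcal{A}^2(\Omega;\mathbb{B})$ is equivalent to the simultaneous density of anti-holomorphic polynomials in $\overline{\mathcal{A}}^2(\Omega)$ and of holomorphic polynomials in $\mathcal{A}^2(\Omega)$. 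These two claims are equivalent via the conjugation isometry $g\mapsto g^{*}$ between $\mathcal{A}^2(\Omega)$ and $\overline{\mathcal{A}}^2(\Omega)$, so the problem reduces to the classical question of polynomial density in $\mathcal{A}^2(\Omega)$.

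The main obstacle, and the only place where the hypothesis $\Omega=\operatorname{Int}(\overline{\Omega})$ enters, is to verify that $\Omega$ is a Carath\'eodory domain and then invoke the Farrell--Markushevich theorem. Since $\Omega$ is bounded and star-shaped with respect to $0$, so is $\overline{\Omega}$ (limits of segments remain in the closure), and Lemma \ref{lemmasimplyconnected} applied to $\overline{\Omega}$ yields that $\mathbb{C}\setminus\overline{\Omega}$ is connected. Combined with $\Omega=\operatorname{Int}(\overline{\Omega})$, this makes $\Omega$ a Carath\'eodory domain, so the Farrell--Markushevich theorem produces, for every $g\in \mathcal{A}^2(\Omega)$, a sequence of holomorphic polynomials converging to $g$ in the $L_2$-norm. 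Chaining the reductions backwards through complex conjugation, the idempotent decomposition, and the bounded isomorphism $\mathcal{T}_f$ then yields completeness of $\mathscr{S}_f(\Omega;\mathbb{B})$ in $\mathcal{A}_f^2(\Omega;\mathbb{B})$.
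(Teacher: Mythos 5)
Your proposal is correct and follows essentially the same route as the paper: reduce via the bounded isomorphism $\mathcal{T}_f$ of Proposition \ref{propinvertibilityinvekua} to density of Bicomplex polynomials in $\mathcal{A}^2(\Omega;\mathbb{B})$, split through the idempotent decomposition into the holomorphic and anti-holomorphic Bergman spaces (which is exactly the paper's ``procedure similar to Lemma \ref{lemmarunge} in the $L_2$-norm''), and verify the Carath\'eodory condition from star-shapedness, Lemma \ref{lemmasimplyconnected}, and $\Omega=\operatorname{Int}(\overline{\Omega})$ so as to invoke the Farrell--Markushevich theorem. The only cosmetic difference is that the identification $W\mapsto(W^+,W^-)$ is an isometry only up to the normalization factor $\tfrac{1}{2}$ in $|W|_{\mathbb{B}}^2=\tfrac{1}{2}(|W^+|^2+|W^-|^2)$, which does not affect the density argument.
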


\begin{proof}
	If $\Omega$ is star-shaped with respect to $z=0$, so is its closure. Indeed, given $z\in \overline{\Omega}$, take a sequence $\{z_n\}\subset \Omega$ such that $z_n\rightarrow z$. Then for any $t\in [0,1]$, $tz_n\in \Omega$ and $tz_n\rightarrow tz$. Hence $tz\in \overline{\Omega}$ for all $t\in [0,1]$. Since $\overline{\Omega}$ is bounded, by Lemma \ref{lemmasimplyconnected}, $\mathbb{C}\setminus \overline{\Omega}$ is a domain. This condition together with the hypothesis $\Omega= \operatorname{Int}(\overline{\Omega})$ and Lemma \ref{lemmasimplyconnected} implies that $\Omega$ is a Carath\'eodory domain \cite[Ch. 18, Prop. 1.9]{conway2}. Hence, $\{z^n\}_{n=0}^{\infty}$ is a complete system in the complex analytic Bergman space $\mathcal{A}^2(\Omega)$, and in consequence, $\{(z^{*})^n\}_{n=0}^{\infty}$ is complete in the anti-analytic Bergman space $\overline{\mathcal{A}}^2(\Omega)$ \cite[Ch. 18, Th. 1.11]{conway2}. Applying a procedure similar to that of the proof of Lemma \ref{lemmarunge} (changing the norm of the maximum by the $L_2$-norm) we obtain that $\{\widehat{z}^n, \mathbf{j}\widehat{z}^n\}_{n=0}^{\infty}$ is complete in $\mathcal{A}^2(\Omega; \mathbb{B})$. Let $W\in \mathcal{A}_f^2(\Omega;\mathbb{B})$. By Proposition \ref{propinvertibilityinvekua}, $V=\mathcal{T}_f^{-1}W\in \mathcal{A}^2(\Omega;\mathbb{B})$. Thus, given $\varepsilon>0 $, there exists a Bicomplex polynomial $P_N(z)= \sum_{n=0}^{N}A_n\widehat{z}^n$ such that $\|V-P_N\|_{L_2(\Omega;\mathbb{B})}<\frac{\varepsilon}{2M_2}$, where  $M_2=\|\mathcal{T}_f\|_{\mathcal{B}\left(\mathcal{A}^2(\Omega;\mathbb{B},\mathcal{A}_f^2(\Omega;\mathbb{B} )\right)}$. Take  $S_N(z)=\mathcal{T}_fP_N(z)\in \mathscr{S}_f^N(\Omega;\mathbb{B})$. Hence 
\begin{align*}
	\left\|W-S_N\right\|_{L_2(\Omega;\mathbb{B})}= \left\|\mathcal{T}_f\left(V-P_N\right)\right\|_{L_2(\Omega;\mathbb{B})}\leqslant M_2\left\|V-P_N\right\|_{L_2(\Omega;\mathbb{B})}\leqslant \frac{\varepsilon}{2}.
\end{align*}
$\therefore \mathscr{S}_f(\Omega;\mathbb{B})$ is complete in $\mathcal{A}_f(\Omega;\mathbb{B})$.
\end{proof}
\begin{proposition}
	Suppose that $\Omega=B_{\varrho_{\Omega}}^{\mathbb{C}}(0)$. Define the sequence of constants $\{M_0^1, M_0^2, M_n\}_{n=0}^{\infty}$ by
	\begin{eqnarray}
		M_0^1 & := & \sqrt{2\pi} \left\|f\right\|_{L_2(0,\varrho_{\Omega}; r^{n+1}dr)},\\
		M_0^2 & := & \sqrt{2\pi} \left\|\frac{1}{f}\right\|_{L_2(0,\varrho_{\Omega}; r^{n+1}dr)},\\
		M_n & := & \left(\pi \left(\left\|\phi_f^{(n)}\right\|_{L_2(0,\varrho_{\Omega}; r^{n+1}dr)}^2+\left\|\phi_{\frac{1}{f}}^{(n)}\right\|_{L_2(0,\varrho_{\Omega}; r^{n+1}dr)}^2\right)\right)^{\frac{1}{2}}, \quad n\geqslant 1.
	\end{eqnarray}
Then the Bergman kernel $\mathscr{K}_{\Omega}^f(A;z,\zeta)$ can be written as
\begin{align*}
	\mathscr{K}_{\Omega}^f(A;z,\zeta) & = \operatorname{Sc}(A)\frac{ f^{*}(\zeta)f(z)}{(M_0^1)^2}+\frac{\mathbf{j}\operatorname{Vec}(A)}{(M_0^1)^2 f^{*}(\zeta)f(z)} \\
	&\quad  + \sum_{n=1}^{\infty} \frac{1}{M_n^2}\left(\left\langle A, \mathcal{Z}_f^{(n)}(1; \zeta)\right\rangle_{\mathbb{B}} \mathcal{Z}_f^{(n)}(1; z)+\left\langle A, \mathcal{Z}_f^{(n)}(\mathbf{j}; \zeta)\right\rangle_{\mathbb{B}} \mathcal{Z}_f^{(n)}(\mathbf{j}; z)\right)
\end{align*}
The series converge with respect to $z$ in the $L_2(\Omega;\mathbb{B})$-norm, and uniformly on compact subsets of $\Omega$.
\end{proposition}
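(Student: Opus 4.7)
The plan is to invoke the general Bergman kernel expansion from Remark \ref{remarkbergmankernelexpasion}, namely
\begin{equation*}
\mathscr{K}_{\Omega}^f(A; z, \zeta) = \sum_{n=0}^{\infty} \langle A, \Phi_n(\zeta)\rangle_{\mathbb{B}}\,\Phi_n(z),
\end{equation*}
which is valid for any orthonormal basis $\{\Phi_n\}$ of the separable complex Hilbert space $\mathcal{A}_f^2(\Omega;\mathbb{B})$, and whose convergence in $L_2(\Omega;\mathbb{B})$ and uniformly on compact subsets of $\Omega$ is already established there (the latter via Proposition \ref{propbergmankernelpseudo}). All the work will consist in choosing the basis and computing the coefficients explicitly.

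First I would apply Theorem \ref{theoremorthonormalbasisbergman}: the orthogonality relations (\ref{orthogonalityformalpowers1})--(\ref{orthogonalityformalpowers3}) together with the explicit expressions (\ref{firstformalpower1})--(\ref{bformalpowerpolarj}) show that, after normalization, the family
\begin{equation*}
\Phi_0^{(1)}:=\frac{\mathcal{Z}_f^{(0)}(1;\cdot)}{M_0^1},\quad \Phi_0^{(2)}:=\frac{\mathcal{Z}_f^{(0)}(\mathbf{j};\cdot)}{M_0^2},\quad \Phi_n^{(1)}:=\frac{\mathcal{Z}_f^{(n)}(1;\cdot)}{M_n},\quad \Phi_n^{(2)}:=\frac{\mathcal{Z}_f^{(n)}(\mathbf{j};\cdot)}{M_n}\ (n\geqslant 1)
\end{equation*}
is an orthonormal basis of $\mathcal{A}_f^2(\Omega;\mathbb{B})$; the constants $M_0^1,M_0^2,M_n$ are precisely the $L_2$-norms of the respective basic radial formal powers, computed by integrating the angular part to produce the factor $2\pi$ (or $\pi$ for $n\geqslant 1$) and reducing to the radial weighted norm $L_2(0,\varrho_{\Omega};r\,dr)$, respectively $L_2(0,\varrho_{\Omega};r^{2n+1}dr)$.

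The next step is a direct computation of each coefficient $\langle A, \Phi_n^{(k)}(\zeta)\rangle_{\mathbb{B}}=\operatorname{Sc}\bigl(A\,\Phi_n^{(k)}(\zeta)^{\dagger}\bigr)$. For the two $n=0$ terms one uses that $f$ is a scalar complex-valued radial function, hence $f^{\dagger}=f^*$, giving
\begin{equation*}
\langle A,\Phi_0^{(1)}(\zeta)\rangle_{\mathbb{B}}\,\Phi_0^{(1)}(z)=\operatorname{Sc}(A)\,\frac{f^*(\zeta)f(z)}{(M_0^1)^2},
\end{equation*}
while, applying (\ref{involutionsecondform}) to $\mathbf{j}/f$ and noting $\operatorname{Sc}(-\mathbf{j}A)=\operatorname{Vec}(A)$, one obtains
\begin{equation*}
\langle A,\Phi_0^{(2)}(\zeta)\rangle_{\mathbb{B}}\,\Phi_0^{(2)}(z)=\frac{\mathbf{j}\,\operatorname{Vec}(A)}{(M_0^2)^2\,f^*(\zeta)f(z)},
\end{equation*}
which produces the two ``$n=0$'' summands appearing in the statement. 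For $n\geqslant 1$, the normalization factor $M_n$ is the same for the $1$-- and $\mathbf{j}$--variants, so pairing the two corresponding basis elements gives the bracket inside the sum, and the $\mathbb{B}$-inner products $\langle A,\mathcal{Z}_f^{(n)}(\Lambda;\zeta)\rangle_{\mathbb{B}}$ remain unexpanded.

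Finally I would invoke Remark \ref{remarkbergmankernelexpasion} to transfer the claimed convergence: $L_2(\Omega;\mathbb{B})$-convergence in $z$ is automatic from the orthonormal basis expansion together with the fact that the sequence of coefficients belongs to $\ell_2(\mathbb{N}_0)$ (since, as noted there, constants lie in $\mathcal{A}_f^2(\Omega;\mathbb{B})$), and uniform convergence on compact subsets follows from the reproducing inequality (\ref{Bergmanpseudoconstant}). The main bookkeeping obstacle is the careful handling of the involution $\dagger$ and the identity $\operatorname{Sc}(-\mathbf{j}A)=\operatorname{Vec}(A)$ when $A\in\mathbb{B}$ is not scalar, together with tracking which normalization constant ($M_0^1$, $M_0^2$ or $M_n$) accompanies each term; all analytic content is already packaged in Theorem \ref{theoremorthonormalbasisbergman} and Remark \ref{remarkbergmankernelexpasion}.
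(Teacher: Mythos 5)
Your proposal is correct and follows essentially the same route as the paper: normalize the basic radial formal powers by the constants $M_0^1$, $M_0^2$, $M_n$ (their $L_2(\Omega;\mathbb{B})$-norms, via Theorem \ref{theoremorthonormalbasisbergman}), plug the resulting orthonormal basis into the expansion of Remark \ref{remarkbergmankernelexpasion}, and evaluate the coefficients $\langle A,\cdot\rangle_{\mathbb{B}}$ explicitly for the $n=0$ terms. Your version of the second $n=0$ summand with $(M_0^2)^2$ in the denominator is in fact the correct normalization (the $(M_0^1)^2$ appearing there in the stated formula is a typo), so no gap remains.
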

\begin{proof}
	Since $M_0^1= \left\|\mathcal{Z}_f^{(0)}(1;z)\right\|_{L_2(\Omega;\mathbb{B})}$, $M_0^2= \left\|\mathcal{Z}_f^{(0)}(\mathbf{j};z)\right\|_{L_2(\Omega;\mathbb{B})}$ and $M_n= \left\|\mathcal{Z}_f^{(n)}(\Lambda;z)\right\|_{L_2(\Omega;\mathbb{B})}$, $\Lambda \in \{1,\mathbf{j}\}$, $n\geqslant 1$ (by (\ref{firstformalpower1}), (\ref{firstformalpowerj}), and (\ref{orthogonalityformalpowers2})), the result follows from Remark \ref{remarkbergmankernelexpasion}, Theorem \ref{theoremorthonormalbasisbergman}, and the fact that
	\[
	\langle A, f(\zeta) \rangle_{\mathbb{B}}= \operatorname{Sc}(A)f^{*}(\zeta) \quad \mbox{and  }\; \left\langle A, \frac{\mathbf{j}}{f(\zeta)} \right\rangle_{\mathbb{B}}=\frac{\operatorname{Vec}(A)}{f^{*}(\zeta)}.
	\]
\end{proof}

\begin{example}
	Consider $f(r)= J_0(\kappa r)$, $q_f=-\kappa^2$, $q_{\frac{1}{f}}=3\kappa^2$, and $\Omega=\mathbb{D}$, as in Example \ref{examplehelmholtz}. By Theorem \ref{theoremformalradialpowers1}, $\phi_f^{(n)}= \frac{y_n^f}{r^{n+\frac{1}{2}}}$, $\phi_{\frac{1}{f}}^{(n)}= \frac{y_n^{\frac{1}{f}}}{r^{n+\frac{1}{2}}}$, where $y_n^{f}$ and $y_n^{\frac{1}{f}}$ satisfies the perturbed Bessel equation
	\begin{equation}\label{exampleperturbebbessel}
		-u_n''+\frac{\left(n+\frac{1}{2}\right)\left(n+\frac{1}{2}\right)}{r^2}u_n= \lambda u_n, \quad 0<r<1,
	\end{equation}
	with $\lambda= \kappa^2$ and $\lambda=-3\kappa^2$, respectively. According to \cite[Example 2.8]{castillo}, the regular solution $u_m(\lambda,r)$ of (\ref{exampleperturbebbessel}) that satisfies the asymptotic relations $u_n(\lambda,r)\sim r^{n+\frac{1}{2}}$, $u'_n(\lambda,r)\sim \left(n+\frac{1}{2}\right)r^{n-\frac{1}{2}}$, $r\rightarrow 0^{+}$, is given by
	\begin{equation}
		u_n(\lambda,r)= \Gamma\left(n+1\right)2^n\lambda^{-\frac{n}{2}}\sqrt{r}J_n(\sqrt{\lambda}r).
	\end{equation}
	Hence
	\begin{equation*}
		\phi_{\frac{1}{f}}^{(n)}= \frac{n!2^n}{\kappa^nr^n}J_n(\kappa r) \quad \phi_{\frac{1}{f}}^{(n)}(r)= \frac{n!2^n}{3^{\frac{n}{2}}\kappa^n}I_n(\sqrt{3}\kappa r),
	\end{equation*}
where $I_n$ stands for the modified Bessel function of the first kind. Thus, the basic radial formal powers are given by
\begin{align*}
	\mathcal{Z}_f^{(0)}(1; z) & = J_0(\kappa r), \quad \mathcal{Z}_f^{(0)}(\mathbf{j};z)= \frac{\mathbf{j}}{J_0(\kappa r)}, \\
	\mathcal{Z}_f^{(n)}(1;z) & = \frac{n!2^n}{\kappa^n}\left(J_n(\kappa r)\cos(n\theta)+\mathbf{j}3^{-\frac{n}{2}}I_n(\sqrt{3}\kappa r )\right)\\
	\mathcal{Z}_f^{(n)}(1;z)& = \frac{n!2^n}{\kappa^n}\mathbf{j}\left(3^{-\frac{n}{2}}I_n(\sqrt{3}\kappa r)\cos(n\theta)+\mathbf{j}J_n(\kappa r)\sin(n\theta)\right),
\end{align*}
and they are an orthogonal basis for the Bergman space associated to the Vekua equation $\overline{\boldsymbol{\partial}}W+\kappa \overline{W}=0$ in $\mathbb{D}$.
\end{example}

\begin{remark}
If $f$ is real-valued, (\ref{mainvekua}) is reduced to the complex main Vekua equation 
\begin{equation}\label{complexvekua}
	\frac{\partial}{\partial z^{*}}w(z)=\frac{f_{z^*}(z)}{f(z)}w^{*}(z),
\end{equation}
where $w$ is a complex-valued function. Since $q_f$ is real-valued, the kernel $G^f(r,t)$ is also real-valued \cite{gilbertatkinson}. Thus, operators $\mathbf{T}_f$ and $\mathbf{T}_{\frac{1}{f}}$ and the functions $\{\phi_n^f(r),  \phi_n^{\frac{1}{f}}(r)\}_{n=0}^{\infty}$ are real-valued. Hence the results presented are valid for the complex equation (\ref{complexvekua}), changing the scalar and vectorial parts for the real and imaginary parts, and the unit $\mathbf{j}$ for $i$. In this case, the Bergman space $\mathcal{A}_f^2(\Omega; \mathbb{C})$ is considered a real Hilbert space. With this approach, the results concerning completeness of $\mathcal{A}_f^2(\Omega;\mathbb{C})$ and the existence of the Bergman kernel coincides with those obtained in \cite{camposbergman}.
\end{remark}

\section{Conclusions}
A construction of a pair of transmutation operators that transmutes Bicomplex holomorphic functions into solutions of the radial main Vekua equation was presented. The construction was based on obtain a relationship between the transmutation operator of the associated radial Schr\"odinger equation and the transmutation operator of the corresponding Darbuoux transformed equation. The properties of continuity and invertibility in the space of classical solutions and in the pseudoanalytic Bergman space were studied. A complete system of solutions, called the radial formal powers, was obtained by transmuting the Bicomplex powers. The completeness of the radial formal powers was established in the sense of the uniform convergence on compact subsets and in the $L_2$-norm. The existence of a reproducing Bergman kernel for the Bicomplex pseudoanalytic Bergman space was proven. In the case of the Bergman space on a disk, the radial formal powers are an orthogonal basis and can be used to approximate the Bergman kernel.

\section*{Acknowledgments}
The author expresses his gratitude to Prof. Briceyda B. Delgado for helpful discussions.


\begin{thebibliography}{99} 
\bibitem{abramowitz} \textsc{M. Abramovitz, I. Stegun}, \textit{Handbook of mathematical functions}, New York: Dover, 1972.

\bibitem{leblont} \textsc{L. Baratchart, J. Leblond, S. Rigat, E. Russ}, \textit{Hardy spaces of the conjugate Beltrami equation}, J. Functional Analysis, 259 (2010), 384-427.

\bibitem {BegehrGilbert}H. Begehr and R. Gilbert, \emph{Transformations, transmutations and kernel functions, vol. 1--2} (Longman Scientific \& Technical, Harlow, 1992).

\bibitem{bergman1} \textsc{S. Bergman}, \textit{Integral operators in the Theory of Linear Partial Differential Equations}, Ergeb. Math. Grenzgeb. Vol. 23. Berlin, Springer, 1969.	

\bibitem{bers} \textsc{L. Bers}, \textit{Theory of pseudo-analytic functions}, New York University, New York, 1952.

\bibitem {brezis}\textsc{H. Brezis}, \textit{Functional Analysis, Sobolev Spaces and Partial Differential Equations}, 1st. Edition, Springer, 2010.

\bibitem{Camposstandar} \textsc{H. Campos}, \textit{Standard transmutation operators for the one dimensional Schr\"odinger operator with a locally integrable potential}, J. Math. Anal. Appl. 453 (2017) 1, 64-81.
	
\bibitem{CamposBicomplex} \textsc{H. M. Campos, V. V. Kravchenko}, \textit{Fundamentals of Bicomplex Pseudoanalytic Function Theory: Cauchy Integral Formulas, Negative Formal Powers and Schr\"odinger Equations with Complex Coefficients}, Complex. Anal. Oper. Theory 7 (2013), 485-518.

\bibitem{camposbergman}  \textsc{H. M. Campos, V. V. Kravchenko}, \textit{The Bergman kernel for the Vekua equation}, Math Meth Appl Sci (2020); 43: 9448-9454.

\bibitem{camposmendez} \textsc{H. Campos, V. V. Kravchenko, L. M\'endez}, \textit{Complete Families of Solutions for the Dirac equation Using Bicomplex Function Theory and Transmutations}, Adv. Appl. Clifford Algebras 22 (2012), 577-594. Doi:10.1007/s00006-012-0349-1.

\bibitem {hugo}\textsc{H. Campos, V. V. Kravchenko, S. M. Torba},
\textit{Transmutations, L-bases and complete families of solutions of the stationary Schr\"{o}dinger equation in the plane}, J. Math. Anal. Appl. 389 (2012), no. 2, 1222-1238.

\bibitem{castanieda} \textsc{A. Casta\~{n}eda, V. V. Kravchenko}, \textit{New applications of pseudoanalytic function theory to the Dirac equation}, J. Phys. A. Math. Gen. 38 (2005), 9207-9219.

\bibitem{castillo} \textsc{R. Castillo-P\'erez, V. V. Kravchenko, S. M. Torba}, \textit{Spectral parameter power series method for perturbed Bessel equations}, Applied Mathematics and Computation, 220 (2013), 676-694. Doi:  https://doi.org/10.1016/j.amc.2013.07.035

\bibitem {conway}\textsc{J. B. Conway}, \textit{Functions of one complex variable, Vol. 1}, New-York: Springer-Verlag, 1978.

\bibitem{conwayfunctional} \textsc{J. B. Conway}, \textit{A Course in Functional Analysis}, Graduate Texts in Mathematics, New-York: Springer-Verlag, 1985.

\bibitem {conway2}\textsc{J. B. Conway}, \textit{Functions of one complex variable, Vol. II}, New-York: Springer-Verlag, 1995.

\bibitem{delgadoleblond} \textsc{B. B. Delgado, J. Leblond}, \textit{Bounded extremal problems in Bergman and Bergman-Vekua spaces}, Complex Variables and Elliptic Equations (2022) 67 (1), 224-238.

\bibitem {duren}\textsc{P. Duren, A. Schuster}, \textit{Bergman Spaces},
Mathematical surveys and monographs Vol. 100, Providence: AMS, 2004.


\bibitem {folland}\textsc{G. B. Folland}, \textit{Real Analysis, Modern Techniques and Their Applications}, 2nd. Edition, New-York: Wiley, 1999.


\bibitem{friedrichs} \textsc{K. O. Friedrichs}, \textit{On the differentiability of the solutions of elliptic differential equations}, Comm Pure Appl Math 1953; 6(3): 299-326.

\bibitem{gilbertatkinson} \textsc{R. P. Gilbert, K. Atkinson}, \textit{Integral Operator Methods for Approximating Solutions of Dirichlet Problems}, ISMN, Vol. 15. Birk\"auser: Basel, 1970. 

\bibitem{rochon} \textsc{R. Gervais Lavoie, L. Marchildon, D. Rochon}, \textit{Infinite-dimensional bicomplex Hilbert spaces}, Ann. Funct. Anal. 1(2), (2010), 75-91. Doi: 10.15352/afa/1399900590

\bibitem{kravpseudo1} \textsc{V. V. Kravchenko}, \textit{On a relation of pseudoanalytic function theory to the two-dimensional Schr\"odinger equation and Taylor series in formal powers in formal powers for its solutions}, J. of Phys. A, 38 (2005), No. 18, 3947-3964.


\bibitem{pseudoanalyticvlad}\textsc{V. V. Kravchenko}, \textit{Applied Pseudoanalytic Function Theory}, Frontiers in Mathematics. Birkh\"auser Basel (2009).	

\bibitem{pablo} \textsc{V. V. Kravchenko, P. E. Moreira, R. M. Porter}, \textit{Complete Systems of Beltrami Fields Using Complex Quaternions and Transmutation Theory}, Adv. Appl. Clifford Algebras 31, 31 (2021). Doi: 10.1007/s00006-021-01131-w.

\bibitem {transmutationdarboux}\textsc{V. V. Kravchenko, S. M. Torba},
\textit{Trasnmutations for Darboux transformed operators with applications},
J. Phys. A: Math. Theor. 45 (2012), no. 075201 (21 pp.).


\bibitem {transmutation1}\textsc{V. V. Kravchenko, S. M. Torba},
\textit{Transmutations and Spectral Parameter Power Series in Eigenvalue
	Problems}. In: Karlovich Y., Rodino L., Silbermann B., Spitkovsky I. (eds)
Operator Theory, Pseudo-Differential Equations, and Mathematical Physics.
Operator Theory: Advances and Applications, vol 228. Basel: Birkh\"auser
(2013), 209-238.

\bibitem{transhiperbolic} \textsc{V. V. Kravchenko, S. M. Torba},
\textit{Construction of transmutation operators and hyperbolic pseudo-analytic functions}. Complex Anal. Oper. Theory 9 (2015), 389-429.

\bibitem{KTBesseltrans} \textsc{V. V. Kravchenko, S. M. Torba}, \textit{Transmutation operators and a new representation for solutions of perturbed Bessel equations}. Math Meth Appl Sci (2021), 44; 6344-6375.

\bibitem{mine1} \textsc{V. V. Kravchenko, V. A. Vicente-Ben\'itez}, \textit{Transmutation operators and complete system of solutions for the radial Schr\"odinger equation}, Math Meth Appl Sci (2020), 43; 9455-9486. 

\bibitem{mine2}  \textsc{V. V. Kravchenko, V. A. Vicente-Ben\'{\i}tez}, \textit{Runge property and approximation by complete systems of solutions for strongly elliptic equations}, Complex Variables and Elliptic Equations (2022), Vol. 67, No. 3, 661–682.   Doi: 10.1080/17476933.2021.1955870

\bibitem{ShapiroBicomplex} \textsc{M. E. Luna-Elizarrar\'as, M. Shapiro, D. C. Struppa, A. Vajiac}, \textit{Bicomplex Holomorphic Functions: The Algebra, Geometry and Analysis of Bicomplex Numbers}, Frontiers in Mathematics. Birkh\"auser Cham (2015).

\bibitem{darbouxorigin} \textsc{V. Matveev, M. Salle}, \textit{Darboux transformations and solitons}, NY. Springer, 1991.

\bibitem{perezregalado} \textsc{C. O. Per\'ez-Regalado, R. Quiroga-Barranco}, {\it Bicomplex Bergman Spaces on Bounded domains}, arXiv:1811.00150vl [math.FA] (31 Oct. 2018).

\bibitem{vekua0} \textsc{I. N. Vekua}, \textit{Generalized analytic functions}, Moscow, Nauka (in Russian), 1959; English translation, Oxford: Pergamon Press, 1962.

\bibitem{vekua} \textsc{I. N. Vekua}, \textit{New Methods for Solving Elliptic Equations}, Amsterdam: North Hollan; 1967.
	
\end{thebibliography}
\end{document}